\DeclareMathAlphabet{\mathpzc}{OT1}{pzc}{m}{it}
\numberwithin{equation}{section}
\begin{document}

\theoremstyle{plain}

\newtheorem{theorem}{Theorem}[section]
\newtheorem{lemma}[theorem]{Lemma}
\newtheorem{example}[theorem]{Example}
\newtheorem{proposition}[theorem]{Proposition}
\newtheorem{corollary}[theorem]{Corollary}
\newtheorem{definition}[theorem]{Definition}
\newtheorem{Ass}[theorem]{Assumption}
\newtheorem{condition}[theorem]{Condition}
\theoremstyle{definition}
\newtheorem{remark}[theorem]{Remark}
\newtheorem{SA}[theorem]{Standing Assumption}

%Stochastic Intervals
\newcommand{\of}{[\hspace{-0.06cm}[}
\newcommand{\gs}{]\hspace{-0.06cm}]}

%Lebesgue Measure
\newcommand\llambda{{\mathchoice
		{\lambda\mkern-4.5mu{\raisebox{.4ex}{\scriptsize$\backslash$}}}
		{\lambda\mkern-4.83mu{\raisebox{.4ex}{\scriptsize$\backslash$}}}
		{\lambda\mkern-4.5mu{\raisebox{.2ex}{\footnotesize$\scriptscriptstyle\backslash$}}}
		{\lambda\mkern-5.0mu{\raisebox{.2ex}{\tiny$\scriptscriptstyle\backslash$}}}}}

%Indikator
\newcommand{\1}{\mathds{1}}

%Filtrations
\newcommand{\F}{\mathbf{F}}
\newcommand{\G}{\mathbf{G}}

\newcommand{\B}{\mathbf{B}}

%Martingale Measures
\newcommand{\M}{\mathcal{M}}

%Scalar Product
\newcommand{\la}{\langle}
\newcommand{\ra}{\rangle}

%Special Type of Quadratic Variation
\newcommand{\lle}{\langle\hspace{-0.085cm}\langle}
\newcommand{\rre}{\rangle\hspace{-0.085cm}\rangle}
\newcommand{\blle}{\Big\langle\hspace{-0.155cm}\Big\langle}
\newcommand{\brre}{\Big\rangle\hspace{-0.155cm}\Big\rangle}

%Coordinate Process
\newcommand{\X}{\mathsf{X}}

%constant paths
\newcommand{\bx}{\mathsf{x}}
\newcommand{\bX}{\mathsf{X}}

%Short Cuts
\newcommand{\tr}{\operatorname{tr}}
\newcommand{\N}{{\mathbb{N}}}
\newcommand{\cadlag}{c\`adl\`ag }
\newcommand{\on}{\operatorname}
\newcommand{\oP}{\overline{P}}
\newcommand{\oQ}{\overline{Q}}
\newcommand{\oO}{\mathcal{O}}
\newcommand{\D}{D(\mathbb{R}_+; \mathbb{R})}

%Typographical
\renewcommand{\epsilon}{\varepsilon}

%Semimartingale laws
\newcommand{\fPs}{\mathfrak{P}_{\textup{sem}}}
\newcommand{\fPas}{\mathfrak{P}^{\textup{ac}}_{\textup{sem}}}
\newcommand{\rrarrow}{\twoheadrightarrow}
\newcommand{\cC}{\mathcal{C}}
\newcommand{\cK}{\mathcal{K}}
\newcommand{\cH}{\mathcal{H}}
\newcommand{\cD}{\mathcal{D}}
\newcommand{\cE}{\mathcal{E}}
\newcommand{\cR}{\mathcal{R}}
\newcommand{\cQ}{\mathcal{Q}}
\newcommand{\cF}{\mathcal{F}}
\newcommand{\bth}{\overset{\leftarrow}\theta}
\renewcommand{\th}{\theta}
%\DeclareMathOperator*{\gr}{gr}

%Basic Stuff
\newcommand{\bR}{\mathbb{R}}
\newcommand{\nnabla}{\nabla}
\newcommand{\f}{\mathfrak{f}}
\newcommand{\g}{\mathfrak{g}}
\newcommand{\oconv}{\overline{\operatorname{conv}}\hspace{0.1cm}}
\newcommand{\usa}{\on{usa}}
\newcommand{\usc}{\textit{USC}}
\newcommand{\uc}{\textit{UC}}
\newcommand{\lip}{\textit{Lip}}
\newcommand{\C}{\mathsf{C}}
\newcommand{\ou}{\overline{u}}
\newcommand{\ua}{\underline{a}}
\newcommand{\uu}{\underline{u}}
\newcommand{\p}{\mathsf{P}}
\newcommand{\cU}{\mathcal{U}}

\renewcommand{\emptyset}{\varnothing}

\allowdisplaybreaks

\makeatletter
\@namedef{subjclassname@2020}{%
	\textup{2020} Mathematics Subject Classification}
\makeatother

 \title[Markov Selections and Feller Properties of nonlinear Diffusions]{Markov Selections and Feller Properties \\ of nonlinear Diffusions}
\author[D. Criens]{David Criens}
\author[L. Niemann]{Lars Niemann}
\address{Albert-Ludwigs University of Freiburg, Ernst-Zermelo-Str. 1, 79104 Freiburg, Germany}
\email{david.criens@stochastik.uni-freiburg.de}
\email{lars.niemann@stochastik.uni-freiburg.de}

\keywords{
nonlinear diffusion; nonlinear semimartingales; nonlinear Markov processes; sublinear expectation; sublinear semigroup; nonlinear expectation; partial differential equation; viscosity solution; semimartingale characteristics; Knightian uncertainty}

\subjclass[2020]{47H20, 49L25, 60G53, 60G65, 60J60}

\thanks{We thank two anonymous referees for many comments and suggestions that helped us to improve the paper.}

\thanks{DC acknowledges financial support from the DFG project SCHM 2160/15-1 and LN acknowledges financial support from the DFG project SCHM 2160/13-1.}
\date{\today}

\maketitle

\begin{abstract}
In this paper we study a family of nonlinear (conditional) expectations that can be understood as a diffusion with uncertain local characteristics. Here, the differential characteristics are prescribed by a set-valued function.
We establish its Feller properties and examine how to linearize the associated sublinear Markovian semigroup. 
In particular, we observe a novel smoothing effect of sublinear semigroups in frameworks which carry enough randomness.
Furthermore, we link the value function corresponding to the semigroup to a nonlinear Kolmogorov equation. This provides a connection to the so-called Nisio semigroup.
\end{abstract}

\section{Introduction}
A \emph{nonlinear diffusion}, or \emph{nonlinear continuous Markov process}, is a family of sublinear expectations \( \{\cE^x \colon x \in \bR\} \) on the Wiener space \( C(\bR_+; \bR) \) with
\( \cE^x \circ X_0^{-1} = \delta_x \) for each \( x \in \bR \) such that the Markov property
\begin{equation} \label{eq: markov property}
\cE^x(\cE^{X_t}(\psi (X_s))) = \cE^x(\psi (X_{t+s}) ), \quad x \in \bR, \ s,t \in \bR_+, 
\end{equation}
holds. Here, \(\psi\) runs through a collection of suitable test functions and \( X \) denotes the canonical process on \( C(\bR_+; \bR) \).
Building upon the pioneering work of Peng \cite{peng2007g, peng2008multi} on the \(G\)-Brownian motion, nonlinear Markov processes have been intensively studied in recent years, both from the perspective of processes under uncertainty  \cite{fadina2019affine, hu2021g, neufeld2017nonlinear}, as well as sublinear Markovian semigroups \cite{denk2020semigroup,GNR22, hol16,K21, NR}.
Using the techniques developed in \cite{NVH}, a general framework for constructing nonlinear Markov processes was established in \cite{hol16}.
To be more precise, for given \( x \in \bR\), the sublinear expectation
\( \cE^x \) has the form
\( \cE^x = \sup_{P \in \cR(x)} E^P \)
with a collection \( \cR(x) \) of semimartingale laws \(P\) on the path space, with initial distribution \( \delta_x \) and absolutely continuous semimartingale characteristics \((B^{P}, C^{P})\), where the differential characteristics \((dB^{P} /d\llambda, dC^{P}/d\llambda)\) are prescribed in a Markovian way. In this paper, we parameterize drift and quadratic variation by a compact parameter space \(F\) and two functions \(b \colon F \times \bR \to \bR\) and \(a \colon F \times \bR \to \bR_+\) such that
\[
\cR (x) := \big\{ P \in \fPas \colon P \circ X_0^{-1} = \delta_{x},\ (\llambda \otimes P)\text{-a.e. } (dB^{P} /d\llambda, dC^{P}/d\llambda) \in \Theta(X)   \big\}, \quad x \in \bR,
\]
where
\[
\Theta (x) := \big\{(b (f, x), a (f, x)) \colon f \in F \big\}, \quad x \in \bR.
\]

As in the theory of (linear) Markov processes, there is a strong link to semigroups. Indeed, the Markov property \eqref{eq: markov property} ensures the semigroup property \( T_t T_s = T_{s+t}, s,t \in \bR_+ \), where the sublinear operators \( T_t, t \in \bR_+, \) are defined by 
\begin{equation} \label{eq: def semigroup}
    T_t(\psi)(x) := \cE^x(\psi(X_t)) = \sup_{P \in \cR(x)} E^P \big[ \psi(X_t) \big]
\end{equation}
for suitable functions \(\psi\).
Using the general theory of \cite{ElKa15, NVH}, the operators
\(T_t, t \in \mathbb{R}_+\), are well-defined on the cone of upper semianalytic functions. 

The purpose of this article is to study two aspects of nonlinear diffusions.
First, we examine the \emph{Feller property} of its associated semigroup and second, we investigate how to \emph{linearize} a nonlinear diffusion, respectively its associated semigroup.
Let us explain our contributions in more detail.

In the case of nonlinear  L\'evy processes, i.e., where the set of possible local characteristics is independent of time and path, Equation \eqref{eq: def semigroup} takes the form
\begin{equation} \label{eq: levy semigroup}
    T_t(\psi)(x) = \sup_{P \in \cR(0)} E^P \big[ \psi(x + X_t) \big] = \cE^0(\psi(x + X_t)),
\end{equation}
and the additive structure in \eqref{eq: levy semigroup} gives access to the \emph{\(C_b\)--Feller property} of \( (T_t)_{t \in \bR_+} \), i.e., \( T_t(C_b(\bR; \bR)) \) \(\subset C_b(\bR; \bR)\) for all \(t \in \mathbb{R}_+\).
However, in general, this property seems to be hard to verify, see \cite[Remark~4.43]{hol16}, \cite[Remark 3.4]{K19} and \cite[Remark 5.4]{K21} for comments. 

By virtue of Berge's maximum theorem, it is a natural idea to deduce regularity properties of the sublinear semigroup \((T_t)_{t \in \mathbb{R}_+}\) from the corresponding properties of the set-valued mapping \(x \mapsto \cR(x)\). The strategy to deduce certain regularity properties from related properties of a correspondence is not new. For instance, it has been used in the seminal paper \cite{nicole1987compactification} to obtain conditions for upper and lower semicontinuity of a value function in a relaxed framework for controlled diffusions.
In our setting, we prove that \(x \mapsto \cR(x)\) is upper hemicontinuous and compact-valued in case \(b\) and \(a\) are continuous and of linear growth, and the set \(\Theta(x)\) is convex for every \(x \in \bR\). This result establishes the \emph{\(\usc_b\)--Feller property} of \( (T_t)_{t \in \bR_+} \), i.e., \( T_t( \usc_b(\bR; \bR)) \subset \usc_b(\bR; \bR) \) for all~\(t \in \mathbb{R}_+\). 

Lower hemicontinuity of the correspondence \(x \mapsto \cR(x)\) appears to be more difficult to establish.
In particular, the example from \cite{SV} for the non-existence of a Feller selection from the set of solutions to a non-wellposed martingale problem shows that our conditions for the upper hemicontinuity of \(x \mapsto \mathcal{R} (x)\) are not sufficient for its lower hemicontinuity.
There are indications in the literature that additional Lipschitz conditions on \(b\) and \(a\) might suffice for lower hemicontinuity. Indeed, such a result was established in \cite{nicole1987compactification} for the relaxed framework of controlled diffusions. After this paper was submitted, local Lipschitz conditions for lower hemicontinuity of a fully path-dependent nonlinear continuous semimartingale framework have been proved in an update of our paper~\cite{CN22}.

In this paper we present a new approach to show Feller properties of \( (T_t)_{t \in \bR_+} \)
by means of a \emph{Feller selection principle}.
Let us detail our ideas.
As the correspondence \( x \mapsto \cR (x) \) is compact-valued, for every upper semicontinuous function \( \psi \colon C(\bR_+; \bR) \to \bR \), and every \( x \in \bR \), there exists a measure \(P_x \) in the set of maximizers \( \cR^*(x) \) of~\eqref{eq: def semigroup}. Building on ideas of Krylov \cite{krylov1973selection} about Markovian selection, and leaning on the techniques from \cite{nicole1987compactification, hausmann86, SV}, we show that for every bounded and upper semicontinuous \( \psi \colon \bR \to \bR\) and every \(t > 0\), there exists a time inhomogeneous \emph{strong Markov selection} \(\{P_{(s, x)} \colon (s, x) \in \bR_+ \times \bR\}\) such that~\(P_{(0, x)} \in \cR (x)\) and
\begin{equation*}
T_t (\psi) (x) = E^{P_{(0, x)}} \big[ \psi (X_t) \big] .
\end{equation*}
Under the additional ellipticity assumption \(a > 0\), using some fundamental results of Stroock and Varadhan \cite{SV}, we prove that the strong Markov family \(\{P_{(t, x)} \colon (t, x) \in \bR_+ \times \bR\}\) is even a (time inhomogeneous) strong Feller family. In particular, this Feller selection principle shows that the semigroup \( (T_t)_{t \in \bR_+} \) has the \emph{strong \(\usc_b\)--Feller property}, i.e.,  \(T_t (\usc_b (\mathbb{R}; \mathbb{R})) \subset C_b (\mathbb{R}; \mathbb{R})\) for all \(t > 0\). Under a uniform ellipticity and boundedness assumption, it even follows that \((T_t)_{t \in \bR_+}\) has the {\em uniform strong Feller property} in the sense that \(T_t\) for \(t > 0\) maps bounded upper semicontinuous functions to bounded uniformly continuous functions. 
To the best of our knowledge, \emph{smoothing effects} of this specific form were not reported before in the context of nonlinear Markov processes. Related, but generally different, smoothing effects are known for viscosity solutions to parabolic Hamilton--Jacobi--Bellman (HJB) (or more general nonlinear) PDEs, see, e.g., \cite{Cra02,kry18, kry17} and the references therein. Under suitable conditions, in the HJB case a continuous terminal function leads to viscosity solutions of class \(C^{1 + \alpha}_{\on{loc}}\) (cf. \cite{Cra02} for details on this result and the notation). In our situation, plugging a possibly {\em discontinuous} function into a sublinear semigroup leads to a {\em continuous} function. 

We emphasise that our idea of first proving the existence of a strong Markov selection and then verifying its (strong) Feller property extends to higher dimensional situations. 

Let us also relate our result to the recent local Lipschitz conditions from \cite{CN22}. It is well-known in the literature on stochastic differential equations (SDEs) that (local) Lipschitz conditions imply pathwise uniqueness (even in the presence of random coefficients, see, e.g., \cite{jacod79}). Indeed, the proof from \cite{CN22} for lower hemicontinuity relies in a crucial manner on the strong existence and pathwise uniqueness of SDEs with random coefficients. 
We are not aware of ellipticity conditions for existence and uniqueness of SDEs with random coefficients. Therefore, we think that a new strategy as proposed in this paper is useful.

Sublinear semigroups can also be constructed by analytic methods. A general approach leading to the so-called {\em Nisio semigroup} and a corresponding viscosity theory was recently established in the paper \cite{NR}. The framework from \cite{NR} allows for general state spaces, and provides conditions for Feller properties on spaces of weighted continuous functions. If the weight functions is vanishing at infinity, this includes the \(C_b\)--Feller property. Further, also in case the weight function is bounded from below and the Nisio semigroup is continuous from above (in a suitable sense), the \(C_b\)--Feller property can be derived.
For the case of convolution semigroups, corresponding to the L\'evy framework, it has been shown in \cite{K21} that \((T_t)_{t \in \bR_+}\) coincides with the Nisio semigroup. 

This relation is based on the link between the semigroup and its generator. We investigate this for our framework. More precisely,
the \(C_b\)--Feller property allows us to identify the so-called value function \([0, T] \times \mathbb{R} \ni (t, x) \mapsto v (t, x) := \cE^x(\psi(X_{T-t})) \) as a bounded viscosity solution to the nonlinear Kolmogorov type PDE
\begin{equation} \label{eq: intro PDE}
\begin{cases}   
\partial_t v (t, x) + G (t, x, v) = 0, & \text{for } (t, x) \in [0, T) \times \mathbb{R}, \\
v (T, x) = \psi (x), & \text{for } x \in \bR,
\end{cases}
\end{equation}
where
\begin{align*}
    G(t, x, \phi) := \sup \Big\{b (f, x) \partial_x \phi (t,x)
    + \tfrac{1}{2} a (f, x) \partial^2_x\phi (t,x) \colon f \in F \Big\}.
\end{align*}
Under suitable Lipschitz and boundedness conditions, we can use uniqueness results for the generator equation \eqref{eq: intro PDE} to show that the semigroup \((T_t)_{t \in \bR_+}\) from \eqref{eq: def semigroup} coincides with the Nisio semigroup from \cite{NR} on the space \(\uc_b (\bR; \bR)\) of bounded uniformly continuous functions from \(\bR\) into \(\bR\). In particular, this relation shows that \((T_t)_{t\in \bR_+}\) is a sublinear semigroup on \(\uc_b(\bR; \bR)\) under these conditions. 

As a referee has pointed out, under Lipschitz conditions, one can prove that the upper and lower envelopes of the value function are viscosity sub- and supersolutions, respectively. A (strong) comparison result (see, e.g., \cite{Pham}) then implies that the value function is already the unique bounded viscosity solution. In particular, its continuity is established en passant. We also detail this (mainly) analytic approach in this paper.

\smallskip
Let us now explain the idea of \emph{linearization} of a sublinear Markovian semigroup.
In the presence of uncertainty, it is not possible to choose a single family \( \{ P_x \colon x \in \bR \} \) such that
\begin{equation*}
T_t (\psi) (x) = E^{P_x} \big[ \psi (X_t) \big]
\end{equation*}
for all functions \( \psi \) and \(t > 0\).
However, under some structural assumptions, for example in the case without drift, we are able to construct a time homogeneous \emph{strong Feller family} \( \{ P^*_x \colon x \in \bR \} \) such that
\begin{equation} \label{eq: intro convex}
\cE^x(\psi(X_t)) = E^{P^*_x} \big[ \psi(X_t) \big] ,
\end{equation}
for \emph{all} convex functions \( \psi \colon \bR \to \bR \) of polynomial growth, and all \( t \in \bR_+ \). We derive \eqref{eq: intro convex} by means of convex stochastic ordering, where we adapt techniques from \cite{hajek85}. Similarly, we present a linearization result for the class of increasing Borel functions \(\psi \colon C(\mathbb{R}_+;\mathbb{R}) \to \mathbb{R}\) in case of certain volatility.
Finally, this linearization allows us to simplify the PDE \eqref{eq: intro PDE}. More precisely, we prove that for convex \(\psi \) of polynomial growth, the  function  \( (t, x) \mapsto \cE^x(\psi(X_{T - t})) \) is the unique viscosity solution (of polynomial growth) to the \emph{linear}~PDE
\begin{equation} \label{eq: intro linear PDE}
\begin{cases}   
\partial_t u (t, x) + \frac{1}{2}a^*(x) \partial_x^2 u(t,x) = 0, & \text{for } (t, x) \in [0, T) \times \mathbb{R}, \\
u (T, x) = \psi (x), & \text{for } x \in \bR,
\end{cases}
\end{equation}
where \(a^*(x) := \sup\{ a(f,x) \colon f \in F \} \).
Additionally, this linearization allows us to deduce that
\( (t, x) \mapsto \cE^x(\psi(X_{T - t})) \) is the unique \emph{classical solution} of \eqref{eq: intro linear PDE}.

\smallskip
This paper is structured as follows: in Section \ref{subsec: setting} we introduce our setting. Section \ref{subsec: markovian semigroups} is devoted to the construction of sublinear Markovian semigroups and their Feller properties. Section \ref{subsec: linearization} shows how to linearize a sublinear Markovian semigroup, while Section \ref{subsec: viscosity} links the nonlinear expectation to the nonlinear Kolmogorov equation \eqref{eq: intro PDE}. Section \ref{sec: nisio} investigates the link to the Nisio semigroup. Section \ref{sec: regularity r} establishes the required regularity of the set-valued mapping \( \cR \). 

The proofs for our main results are given in the remaining sections. 
More precisely, the Feller properties of sublinear Markovian semigroups are shown in Section \ref{sec: proof main result}, 
the selection principles are proved in Section~\ref{sec: feller selection}, the linearization is proved in Section~\ref{sec: uniform selection}
and the PDE connection is proved in Section~\ref{sec: viscosity property}. In Section~\ref{sec: pf nisio} the proofs for the relation to the Nisio semigroup are given.

\section{Main Results}
\subsection{The Setting}\label{subsec: setting}
Define $\Omega$ to be the space of continuous functions \(\mathbb{R}_+ \to \mathbb{R}\) endowed with the local uniform topology. 
The canonical process on $\Omega$ is denoted by \(X\), i.e., \(X_t (\omega) = \omega (t)\) for \(\omega \in \Omega\) and \(t \in \mathbb{R}_+\). 
It is well-known that \(\mathcal{F} := \mathcal{B}(\Omega) = \sigma (X_t, t \in \bR_+)\).
We define $\F := (\mathcal{F}_t)_{t \in \bR_+}$ as the canonical filtration generated by $X$, i.e., \(\mathcal{F}_t := \sigma (X_s, s \in [0, t])\) for \(t \in \mathbb{R}_+\). Notice that we do not make the filtration \(\F\) right-continuous. The set of probability measures on \((\Omega, \mathcal{F})\) is denoted by \(\mathfrak{P}(\Omega)\) and endowed with the usual topology of convergence in distribution.
Let \(F\) be a Polish space and 
let \(b \colon F \times \mathbb{R} \to \mathbb{R}\) and \(a \colon F \times \mathbb{R} \to \mathbb{R}_+\) be Borel functions. We define the correspondence, i.e., the set-valued mapping, \(\Theta \colon \bR \twoheadrightarrow \mathbb{R} \times \mathbb{R}_+\) by
\[
\Theta (x) := \big\{(b (f, x), a (f, x)) \colon f \in F \big\}.
\]

\begin{SA} \label{SA: compact}
\(F\) is compact.
\end{SA}

\begin{SA} \label{SA: meas gr}
\(\Theta\) has a measurable graph, i.e., the graph
\[
\on{gr} \Theta = \big\{ (x, b, a) \in \bR \times \mathbb{R} \times \mathbb{R}_+ \colon (b, a) \in \Theta (x) \big\}
\]
is Borel. 
\end{SA}
\begin{remark}
        By virtue of \cite[Lemma 2.8]{CN22}, if Standing Assumption \ref{SA: compact} is in force,  Standing Assumption~\ref{SA: meas gr} holds once \(b\) and \(a\) are continuous in their first variables.
\end{remark}

We call a real-valued continuous process \(Y = (Y_t)_{t \geq 0}\) a (continuous) \emph{semimartingale after a time \(t^* \in \mathbb{R}_+\)} if the process \(Y_{\cdot + t^*} = (Y_{t + t^*})_{t \geq 0}\) is a semimartingale for its natural right-continuous filtration.
Notice that it comes without loss of generality that we consider the right-continuous version of the filtration (see \cite[Proposition~2.2]{neufeld2014measurability}). 
The law of a semimartingale after \(t^*\) is said to be a \emph{semimartingale law after \(t^*\)} and the set of them is denoted by \(\fPs (t^*)\). 
Notice also that \(P \in \fPs(t^*)\) if and only if the coordinate process is a semimartingale after \(t^*\).
For \(P \in \fPs (t^*)\) we denote the semimartingale characteristics of the shifted coordinate process \(X_{\cdot + t^*}\) by \((B^P_{\cdot + t^*}, C^P_{\cdot + t^*})\). 
Moreover, we set 
\[
\fPas (t^*) := \big\{ P \in \fPs (t^*) \colon P\text{-a.s. } (B^P_{\cdot + t^*}, C^P_{\cdot + t^*}) \ll \llambda \big\}, \quad \fPas := \fPas(0),
\]
where \(\llambda\) denotes the Lebesgue measure.
For \( x \in \bR \), we define
\[
\cR (x) := \big\{ P \in \fPas \colon P \circ X_0^{-1} = \delta_{x},\ (\llambda \otimes P)\text{-a.e. } (dB^{P} /d\llambda, dC^{P}/d\llambda) \in \Theta(X)   \big\}.
\]
While the correspondence \( \cR \) is in the focus of interest for our study, it is convenient to introduce another correspondence \( \cC \).
For \((t,\omega) \in \of 0, \infty\of \,:= \bR_+ \times \Omega\), we define 
\begin{align*}
\cC(t,\omega) := \big\{ P \in \mathfrak{P}_{\text{sem}}^{\text{ac}}(t)\colon P(&X = \omega \text{ on } [0, t]) = 1, \\ &(\llambda \otimes P)\text{-a.e. } (dB^P_{\cdot + t} /d\llambda, dC^P_{\cdot + t}/d\llambda) \in \Theta (X_{\cdot + t}) \big\}.
\end{align*}
%where we use the standard notation \(X^t := X_{\cdot \wedge t}\).
Note that \( \cC(0,x) = \cR(x) \) for every \( x \in \bR\).

\begin{SA} \label{SA: non empty}
\(\cC (t, \omega) \not = \emptyset\) for all \((t, \omega) \in \of 0, \infty\of\).
\end{SA}
\begin{remark} By virtue of \cite[Lemma 2.10]{CN22}, Standing Assumption \ref{SA: non empty} holds under continuity and linear growth conditions on \(b\) and \(a\). In particular, Standing Assumption \ref{SA: non empty} is implied by the Conditions \ref{cond: LG} and \ref{cond: continuity} below.
\end{remark}

%-------------------------------
%-------------------------------

\subsection{Sublinear Markovian Semigroups} \label{subsec: markovian semigroups}

For each \( x \in \bR \), we define the sublinear operator \( \cE^x \) on the convex cone of upper semianalytic functions \(\psi \colon \Omega \to \bR\) by
\( \cE^x(\psi) := \sup_{P \in \cR(x)} E^P[ \psi ] \). For every \( x \in \bR \), we have by construction that \( \cE^x(\psi(X_0)) = \psi(x) \) for every upper semianalytic function \(\psi \colon \bR \to \bR\).

\begin{definition}
Let \( \mathcal{H} \) be a convex cone of functions \( f \colon \bR \to \bR \) containing all constant functions.
A family of sublinear operators \( T_t \colon \mathcal{H} \to \mathcal{H}, \ t \in \bR_+,\) is called a \emph{sublinear Markovian semigroup} on \( \mathcal{H} \) if it satisfies the following properties:
\begin{enumerate}
    \item[\textup{(i)}] \( (T_t)_{t \in \bR_+} \) has the semigroup property, i.e., 
          \( T_s T_t = T_{s+t} \) for all \(s, t \in \bR_+ \) and
          \( T_0 = \on{id} \),
    
    \item[\textup{(ii)}] \( T_t \) is monotone for each \( t \in \bR_+\), i.e., 
    \( f, g \in \mathcal{H} \) with \( f \leq g \) implies \(T_t (f) \leq T_t (g) \),
    
    \item[\textup{(iii)}] \( T_t \) preserves constants for each  \( t \in \bR_+\), i.e.,
    \( T_t(c) = c \) for each \( c \in \bR  \).
\end{enumerate}
\end{definition}

The following proposition should be compared to \cite[Lemma 4.32]{hol16}. Note that the framework of \cite{ElKa15}, which is also used in our article, allows for more flexibility regarding initial values in \( \cR \) compared to \cite{hol16}.
Indeed, as it relies on the results of  \cite{NVH}, \cite{hol16} needs to introduce, for every \( x \in \bR\), the space \( \Omega_x := \{ \omega \in \Omega \colon w (0) =x\} \) to capture the initial value. In consequence, the sublinear expectation \( \cE^x \) constructed in \cite{hol16} is only defined on \( \Omega_x \), which requires more notational care. \vspace{1em}

Denote, for \(t \in \mathbb{R}_+\), the shift operator \(\theta_t \colon \Omega \to \Omega\) by \(\theta_t (\omega) := \omega(\cdot + t)\) for all \(\omega \in \Omega\).
\begin{proposition} \label{prop: markov property}
For every upper semianalytic function \( \psi \colon \Omega \to \bR \), the equality
\[
\cE^x( \psi \circ \theta_t) = \cE^x ( \cE^{X_t} (\psi))
\]
holds for every \((t, x) \in \bR_+ \times \bR\).
\end{proposition}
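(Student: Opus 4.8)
The plan is to establish the Markov property via the standard two-inclusion argument, using a disintegration (conditional probability) of measures in $\cR(x)$ along the stopping-time $\sigma$-algebra $\cF_t$, together with the flow/concatenation structure encoded in the correspondence $\cC$. First I would recall that because $\Theta$ has Markovian structure, the sets $\cC(t,\omega)$ depend on $\omega$ only through $\omega(t)$; writing $\cC(t,x) := \cC(t,\omega)$ for any $\omega$ with $\omega(t)=x$, the key structural fact is a \emph{stability under conditioning and concatenation} statement: for $P \in \cR(x)$, a regular conditional probability $(P^\omega_t)_{\omega \in \Omega}$ of $P$ given $\cF_t$ satisfies $P^\omega_t \in \cC(t,\omega)$ for $P$-a.e.\ $\omega$, and conversely, given $P \in \cR(x)$ and a measurable selection $\omega \mapsto Q_\omega \in \cC(t,\omega)$, the concatenation $P \otimes_t Q_\cdot$ again lies in $\cR(x)$. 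This is exactly the kind of fibered product / flow property proved for the correspondence $\cC$ in \cite{CN22} (in the spirit of \cite{NVH, ElKa15}), so I would cite it rather than reprove it. Note also that under the shift $\theta_t$, if $Q \in \cC(t,\omega)$ then $Q \circ \theta_t^{-1} \in \cR(\omega(t))$, since shifting moves the characteristics back to start at time $0$ and Markovian structure makes them depend only on the current position; this is the bridge between $\cC$ and $\cR$.

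The main argument then proceeds as follows. Fix an upper semianalytic $\psi \colon \Omega \to \bR$ and $(t,x)$. For the inequality $\cE^x(\psi \circ \theta_t) \le \cE^x(\cE^{X_t}(\psi))$: take any $P \in \cR(x)$ and a regular conditional probability $(P^\omega_t)$ given $\cF_t$. By the conditioning half of the structural fact, $P^\omega_t \in \cC(t,\omega)$ for $P$-a.e.\ $\omega$, hence $P^\omega_t \circ \theta_t^{-1} \in \cR(\omega(t)) = \cR(X_t(\omega))$. Therefore, using that $\psi \circ \theta_t$ is still upper semianalytic (measurability of $\theta_t$) and the tower property,
\[
E^P[\psi \circ \theta_t] = E^P\big[ E^{P^\cdot_t}[\psi \circ \theta_t] \big] = E^P\big[ E^{P^\cdot_t \circ \theta_t^{-1}}[\psi] \big] \le E^P\big[ \sup_{Q \in \cR(X_t)} E^Q[\psi] \big] = E^P\big[ \cE^{X_t}(\psi) \big] \le \cE^x(\cE^{X_t}(\psi)),
\]
and taking the supremum over $P \in \cR(x)$ gives the claim. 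Here one needs the map $y \mapsto \cE^y(\psi)$ to be upper semianalytic so that the composed conditional expectation is well-defined and the last $\cE^x$ makes sense; this measurability is part of the general machinery of \cite{ElKa15, NVH} and I would invoke it (it is the standard ``upper semianalyticity is preserved under the operations $\sup$ over analytic sets and integration'' package).

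For the reverse inequality $\cE^x(\psi \circ \theta_t) \ge \cE^x(\cE^{X_t}(\psi))$: fix $\epsilon > 0$. By a measurable (universally measurable, $\epsilon$-optimal) selection theorem applied to the correspondence $\omega \mapsto \cC(t,\omega)$ --- whose graph is analytic and whose fibers, via $\theta_t$, carry the near-maximizers of $Q \mapsto E^Q[\psi]$ over $\cR(X_t(\omega))$ --- there is a universally measurable map $\omega \mapsto Q_\omega \in \cC(t,\omega)$ with $E^{Q_\omega \circ \theta_t^{-1}}[\psi] \ge \cE^{X_t(\omega)}(\psi) - \epsilon$. Now pick $P \in \cR(x)$ with $E^P[\cE^{X_t}(\psi)] \ge \cE^x(\cE^{X_t}(\psi)) - \epsilon$, form the concatenation $\widetilde P := P \otimes_t Q_\cdot$, which lies in $\cR(x)$ by the concatenation half of the structural fact, and compute
\[
\cE^x(\psi \circ \theta_t) \ge E^{\widetilde P}[\psi \circ \theta_t] = E^P\big[ E^{Q_\cdot \circ \theta_t^{-1}}[\psi] \big] \ge E^P\big[ \cE^{X_t}(\psi) \big] - \epsilon \ge \cE^x(\cE^{X_t}(\psi)) - 2\epsilon.
\]
Letting $\epsilon \downarrow 0$ finishes the proof. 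I expect the main obstacle to be bookkeeping around measurability and the precise form of the conditioning/concatenation stability of $\cR$ (equivalently $\cC$): one must be careful that the absolute-continuity requirement on the characteristics and the initial-law constraint are both preserved under disintegration and pasting, and that the relevant selection can be taken universally measurable so that the composed functionals stay upper semianalytic. All of this is genuinely available from \cite{CN22, ElKa15, NVH} for the correspondence $\cC$, so the real work is to phrase the application cleanly; the stochastic-analytic content (characteristics behave well under stopping and concatenation) is standard.
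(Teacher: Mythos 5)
Your proof is correct, but it takes a different route from the paper's by unfolding the dynamic programming principle rather than invoking it. The paper's proof is a two-liner: it introduces the conditional sublinear expectation $\cE_t(\psi) := \sup_{P \in \cC(t,\omega)} E^P[\psi]$, applies \cite[Corollary~4.4]{CN22} to get the shift identity $\cE_t(\psi \circ \theta_t)(\omega) = \cE^{\omega(t)}(\psi)$ (which exploits exactly the Markovian structure of $\Theta$, as you note), and then cites the dynamic programming principle \cite[Theorem~2.4]{CN22} in the form $\cE^x(\psi \circ \theta_t) = \cE^x(\cE_t(\psi \circ \theta_t))$ to conclude. Your two-inequality argument — disintegration plus stability under conditioning for one direction, universally measurable $\epsilon$-optimal selection plus stability under pasting for the other — is precisely how the DPP of \cite{CN22} (following \cite{ElKa15, NVH}) is proved in the first place, so you are effectively inlining that proof and fusing it with the shift bridge $\cC(t,\omega) \ni Q \mapsto Q \circ \theta_t^{-1} \in \cR(\omega(t))$ (which in the paper appears separately as Lemma~\ref{lem: implication c^*}). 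Both routes rest on the same ingredients from \cite{CN22}, just cited at different levels of abstraction: the paper's version is shorter and modular, while yours makes the conditioning/pasting mechanics and the role of the Markovian structure of $\Theta$ explicit. One small bookkeeping point worth being careful about if you were to write this out in full: the stability-under-conditioning statement in the paper is phrased for the correspondence $\cK(t,x) = \cC(t,\bx)$ with the constant path, via a pasting $\delta_{\omega(t)} \otimes_t P(\cdot \mid \cF_t)(\omega)$; your direct claim $P^\omega_t \in \cC(t,\omega)$ is equivalent but needs the observation that $\cC(t,\omega)$ depends on $\omega$ only through $\omega(t)$ (Lemma~\ref{lem: iwie Markov}), which you implicitly use when passing to $\cR(\omega(t))$.
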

\begin{proof}
For every \( (t, \omega) \in \of 0, \infty \of \), we define 
\[
\mathcal{E}_t (\psi) (\omega) := \sup_{P \in \cC (t, \omega)} E^P \big[ \psi\big].
\]
Now, we get from
\cite[Corollary 7.3]{CN22} that
\[ \cE_t(\psi \circ \theta_t)(\omega) = \sup_{P \in \cR(\omega(t))} E^{P}\big[\psi( \omega(t) + X - X_0) \big] = \sup_{P \in \cR (\omega(t))} E^P \big[ \psi\big].
\]
Hence, 
\begin{align} \label{eq: equality markov shift} \cE_t(\psi \circ \theta_t)(\omega) = \cE^{\omega(t)}(\psi).
\end{align}
Finally, the dynamic programming principle (\cite[Theorem 3.1]{CN22}) yields
\[
    \cE^x(\psi \circ \theta_t )  = \cE^x( \cE_t( \psi \circ \theta_t)) 
    = \cE^x( \cE^{X_t}(\psi)).
\]
The proof is complete.
\end{proof}

We point out that Proposition \ref{prop: markov property} confirms the intuition that the coordinate process is a nonlinear Markov process under the family \( \{\cE^x \colon x \in \bR\} \), as it implies the equality
\[ \cE^x(\psi(X_{s+t})) = \cE^x( \cE^{X_t}( \psi (X_s))) \]
for every upper semianalytic function \(\psi \colon \bR \to \mathbb{R}\), \(s,t \in \bR_+ \) and \(x \in \bR\).
Using Proposition~\ref{prop: markov property}, the following proposition
is a restatement of \cite[Remark 4.33]{hol16} for our framework.

\begin{proposition}
The family of operators \( (T_t)_{t \in \bR_+} \) given by
\[ T_t ( \psi )(x) := \cE^x(\psi(X_t)), \quad (t, x) \in \bR_+ \times \bR,\]
defines a sublinear Markovian semigroup on the set of bounded upper semianalytic functions.
\end{proposition}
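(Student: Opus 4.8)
The plan is to verify the three defining properties of a sublinear Markovian semigroup for $(T_t)_{t\in\bR_+}$ acting on the convex cone $\mathcal{H}$ of bounded upper semianalytic functions $\psi\colon\bR\to\bR$. First I would observe that $\mathcal{H}$ is indeed mapped into itself: for $\psi\in\mathcal{H}$, the function $X_t$-composition $\psi\circ X_t\colon\Omega\to\bR$ is bounded and Borel, hence upper semianalytic on $\Omega$, so $x\mapsto\cE^x(\psi(X_t))=\sup_{P\in\cR(x)}E^P[\psi(X_t)]$ is upper semianalytic by the standard measurable-selection/projection arguments underlying the framework of \cite{ElKa15, NVH} (the supremum of an upper semianalytic function over an analytically-parametrized family of measures stays upper semianalytic); boundedness is immediate from $|\psi|\le\|\psi\|_\infty$. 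Thus $T_t\colon\mathcal{H}\to\mathcal{H}$ is well-defined, and since $\mathcal{H}$ contains the constants and is a convex cone, the ambient hypotheses of the definition are met.

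Next I would check the three axioms. Monotonicity (ii) is trivial: $\psi\le\phi$ pointwise forces $E^P[\psi(X_t)]\le E^P[\phi(X_t)]$ for every $P\in\cR(x)$, so taking suprema gives $T_t\psi\le T_t\phi$. Preservation of constants (iii) follows because each $P\in\cR(x)$ is a probability measure, so $E^P[c]=c$ and hence $T_t(c)(x)=\sup_{P\in\cR(x)}c=c$ (using $\cR(x)\ne\emptyset$, which holds by Standing Assumption~\ref{SA: non empty} since $\cR(x)=\cC(0,x)$). For $T_0$: $\cR(x)$ consists of laws with $P\circ X_0^{-1}=\delta_x$, so $E^P[\psi(X_0)]=\psi(x)$ for all such $P$, giving $T_0\psi=\psi$, i.e.\ $T_0=\on{id}$.

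The semigroup property (i) is the substantive point, and it is exactly where Proposition~\ref{prop: markov property} is used. Writing $T_s T_t\psi$ at a point $x$, we have $(T_s T_t\psi)(x)=\cE^x\big((T_t\psi)(X_s)\big)=\cE^x\big(\cE^{X_s}(\psi(X_t))\big)$, since $(T_t\psi)(y)=\cE^y(\psi(X_t))$. Now apply Proposition~\ref{prop: markov property} with the upper semianalytic function $\Omega\ni\omega\mapsto\psi(\omega(t))=\psi(X_t(\omega))$ in the role of the ``$\psi$'' there and with time parameter $s$: this yields $\cE^x\big(\cE^{X_s}(\psi(X_t))\big)=\cE^x\big((\psi\circ X_t)\circ\theta_s\big)=\cE^x\big(\psi(X_{s+t})\big)=(T_{s+t}\psi)(x)$, because $(\psi\circ X_t)\circ\theta_s(\omega)=\psi(\omega(t+s))=\psi(X_{t+s}(\omega))$. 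Hence $T_sT_t=T_{s+t}$ on $\mathcal{H}$.

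The main obstacle, such as it is, is not conceptual but a measurability/well-definedness bookkeeping matter: one must be sure that $x\mapsto T_t\psi(x)$ lands back in the cone of bounded upper semianalytic functions so that the composition $T_s(T_t\psi)$ even makes sense, and that the iterated sublinear expectation in Proposition~\ref{prop: markov property} is legitimately applied to $\psi\circ X_t$ rather than merely to functions of the terminal coordinate. Both are handled by the general theory invoked in the excerpt (the stability of upper semianalyticity under the operations in \cite{ElKa15, NVH}), so the proof reduces to citing that stability and then the three short verifications above, exactly paralleling \cite[Remark 4.33]{hol16}.
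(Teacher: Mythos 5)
Your proposal is correct and follows essentially the same route as the paper: the paper obtains the proposition by citing Proposition~\ref{prop: markov property} for the semigroup law $T_sT_t=T_{s+t}$ (exactly as you do by applying it to $\psi\circ X_t$ with time $s$) and defers the remaining easy verifications (monotonicity, preservation of constants, $T_0=\mathrm{id}$, and the stability of the cone of bounded upper semianalytic functions under $T_t$ via the theory of \cite{ElKa15,NVH}) to \cite[Remark~4.33]{hol16}. Your write-up simply spells out those routine checks explicitly, which is a correct and complete rendering of the paper's argument.
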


\begin{remark} \label{rem: nonMarkov laws}
It is worth pointing out that \(\cR(x)\) contains {\em non-}Markovian laws. This is already the case in the L\'evy situation where \(b (f, x) \equiv b (f)\) and \(a (f, x) \equiv a (f)\). To give a concrete example, consider \(F = [\underline{a}, \overline{a}]\) for \(\underline{a} < \overline{a}\), \(b = 0\) and \(a (f, x) = f\). Then, \(\{\cE^x \colon x \in \bR\}\) corresponds to a \(G\)-Brownian motion with \(G\)-function \(G (x) = (\overline{a} x^+ - \underline{a} x^-) / 2\). In this case, the set \(\cR (x)\) contains also laws of processes of the type
    \[
    d Y_t = \sigma (t, Y) \, d W_t, \quad Y_0 = x,
    \]
    where \(\sigma \colon \of 0, \infty\of \, \to \mathbb{R}\) is an arbitrary predictable functional that is continuous on \(\of 0, \infty\of\) and such that \(\underline{a} \leq \sigma^2 \leq \overline{a}\).\footnote{The continuity and boundedness assumptions on \(\sigma\) entail the existence of \(Y\) by Skorokhod's existence theorem.} Such processes are non-Markovian in general.  
\end{remark}

In the following, we investigate the semigroup property of \((T_t)_{t \in \mathbb{R}_+}\) on convex cones consisting of more regular functions.

\begin{definition}
We say that the sublinear Markovian semigroup \((T_t)_{t \in \mathbb{R}_+}\) has the 
\begin{enumerate}
    \item[\textup{(a)}]
    \emph{\(\usc_b\)--Feller property} if it is a sublinear Markovian semigroup on the space \(\usc_b(\mathbb{R}; \mathbb{R})\) of bounded upper semicontinuous functions from \(\mathbb{R}\) into \(\mathbb{R}\);
            \item[\textup{(b)}]
        \emph{\(C_b\)--Feller property} if it is a sublinear Markovian semigroup on the space \(C_b(\mathbb{R}; \mathbb{R})\) of bounded continuous functions from \(\mathbb{R}\) into \(\mathbb{R}\);
         \item[\textup{(c)}]
        \emph{\(\uc_b\)--Feller property} if it is a sublinear Markovian semigroup on the space \(\uc_b(\mathbb{R}; \mathbb{R})\) of bounded uniformly continuous\footnote{Of course, uniform continuity refers to the Euclidean metric.} functions from \(\mathbb{R}\) into \(\mathbb{R}\);
    \item[\textup{(d)}]
          \emph{\(C_0\)--Feller property} if it is a sublinear Markovian semigroup on the space \(C_0(\mathbb{R}; \mathbb{R})\) of continuous functions from \(\mathbb{R}\) into \(\mathbb{R}\) which are vanishing at infinity;
    \item[\textup{(e)}] \emph{strong \(\usc_b\)--Feller property} if \(T_t (\usc_b (\mathbb{R}; \mathbb{R})) \subset C_b (\mathbb{R}; \mathbb{R})\) for all \(t > 0\).
        \item[\textup{(f)}] \emph{uniform strong \(\usc_b\)--Feller property} if \(T_t (\usc_b (\mathbb{R}; \mathbb{R})) \subset \uc_b (\mathbb{R}; \mathbb{R})\) for all \(t > 0\).
\end{enumerate}
\end{definition}
\begin{remark} \label{rem: schilling}
As observed in \cite{schilling98}, in case of linear semigroups, the \(\usc_b\)--Feller property is equivalent to the \(C_b\)--Feller property. Indeed, this follows simply from the fact that \(\usc_b (\mathbb{R}; \mathbb{R}) = - \textit{LSC}_b (\mathbb{R}; \mathbb{R})\), where the latter denotes the space of bounded lower semicontinuous functions.
\end{remark}
To formulate our main results we need to introduce some conditions.
\begin{condition}[Convexity] \label{cond: convexity}
	For every \(x \in \mathbb{R}\), the set \(\Theta(x)\) is convex.
\end{condition}
\begin{condition}[Linear Growth] \label{cond: LG}
	There exists a constant \(\C > 0\) such that 
	\[
	|b (f, x)|^2 + |a (f, x)| \leq \C\, ( 1 + |x|^2 )
	\]
	for all \(f \in F\) and \(x \in \mathbb{R}\).
\end{condition}

\begin{condition}[Boundedness]\label{cond: bdd}
    \(\sup\, \{ | b (f, x) | + a (f, x) \colon (f, x) \in F \times \bR \} < \infty\). 
\end{condition}

\begin{condition}[Continuity] \label{cond: continuity}
	\(b\) and \(a\) are continuous.
\end{condition}

\begin{condition}[Local Lipschitz Continuity] \label{cond: loc Lip}
    For every \(N > 0\), there exists a constant \(\C = \C (N) > 0\) such that 
    \[
    |b (f, x) - b(f, y)| + | \sqrt{a} (f, x) - \sqrt{a} (f, y)| \leq \C \, |x - y|, 
    \]
    for all \(f \in F\) and \(x, y \in [- N, N]\). 
\end{condition}

\begin{condition}[Ellipticity] \label{cond: ellipticity}
\(a > 0\).
\end{condition}

\begin{condition}[Uniform Ellipticity] \label{cond: uniform ellipticity}
\(\inf\, \{ a (f, x) \colon (f, x) \in F \times \bR\} > 0\).
\end{condition}

\begin{theorem} \label{thm: new very main}
Suppose throughout that the Conditions \ref{cond: convexity}, \ref{cond: LG} and \ref{cond: continuity} hold.
\begin{enumerate}
   \item[\textup{(i)}] The sublinear Markovian semigroup \((T_t)_{t \in \mathbb{R}_+}\) has the \(\usc_b\)--Feller property.
   \item[\textup{(ii)}] Assume that Condition~\ref{cond: loc Lip} holds. Then, \((T_t)_{t \in \bR_+}\) has the \(C_b\)--Feller property.
   \item[\textup{(iii)}]  Assume that Condition~\ref{cond: ellipticity} holds. Then, \((T_t)_{t \in \mathbb{R}_+}\) has the \(C_0\) and the strong \(\usc_b\)--Feller properties. 
   \item[\textup{(iv)}] Assume that the Conditions~\ref{cond: bdd} and \ref{cond: uniform ellipticity} hold. Then, \((T_t)_{t \in \bR_+}\) has the uniform strong \(\usc_b\)--Feller property.
\end{enumerate}
\end{theorem}

\begin{remark} \label{rem: no uniqueness}
	Let us shortly discuss the relation of Theorem \ref{thm: new very main} to the setting where \(a (f, x) = a(x)\) and \(b(f, x) = b(x)\) or, equivalently, where \(F\) is a singleton. It is important to notice that even in this case the semigroup \((T_t)_{t \in \mathbb{R}_+}\) is \emph{not necessarily} linear. Indeed, the nonlinearity stems from the possibility that the martingale problem associated to the coefficients \(b\) and \(a\) might not be well-posed.
	Furthermore, the example in \cite[Exercise 12.4.2]{SV} shows that the Conditions \ref{cond: convexity}, \ref{cond: LG} and \ref{cond: continuity} are \emph{not} sufficient for the \(C_b\) nor for the \(C_0\)--Feller property of \((T_t)_{t \in \mathbb{R}_+}\). In particular, this means that neither the local Lipschitz nor the ellipticity assumption \(a > 0\) can be dropped in Theorem \ref{thm: new very main} (ii) and (iii).
\end{remark}

For a nonlinear framework \emph{with jumps}, the \(\usc_b\)--Feller property was proved in \cite[Proposition~4.36, Theorem 4.41, Lemma 4.42]{hol16} under uniform boundedness and global Lipschitz conditions.\footnote{In the presence of jumps, there appears to be a gap in the proof of \cite[Lemma~4.42]{hol16}, as the map \(\omega \mapsto \omega (t)\) is not upper semicontinuous on the Skorokhod space of \cadlag functions. Indeed, by linearity, upper semicontinuity would already imply continuity, which is not the case.} Theorem \ref{thm: new very main} shows that in our framework one can weaken these assumptions substantially. 

The \(C_b\)--Feller property is of fundamental importance for the relation of nonlinear processes and semigroups. In Section \ref{subsec: viscosity} it enables us to derive a new existence and uniqueness result and a stochastic representation for certain Kolmogorov type PDEs. To the best of our knowledge, the uniform strong and the strong \(\usc_b\)--Feller properties of nonlinear Markov processes have not been investigated before. 
We highlight that both provide a smoothing effect which seems to be new in the literature on nonlinear Markov processes.

Let us now discuss the proofs of Theorem \ref{thm: new very main}. We start with (iii) and (iv), i.e., the \(C_0\) and (uniform) strong \(\usc_b\)--Feller properties, which we prove simultaneously. The first main tool for our proof is the following \emph{strong Markov selection principle} which we believe to be of interest in its own. Before we can state our result we need more notation and terminology.
For a probability measure \(P\) on \((\Omega, \mathcal{F})\), a kernel \(\Omega \ni \omega \mapsto Q_\omega \in \mathfrak{P}(\Omega)\), and a finite stopping time \(\tau\), we define the pasting measure
\[
(P \otimes_\tau Q) (A) := \iint \1_A (\omega \otimes_{\tau(\omega)} \omega') Q_\omega (d \omega') P(d \omega)
\]
for all \(A \in \cF\), where 
	\[
	\omega \otimes_t \omega' :=  \omega \1_{[ 0, t)} + (\omega (t) + \omega' - \omega' (t))\1_{[t, \infty)}.
	\] 

\begin{definition}[Time inhomogeneous Markov Family]
	A family \(\{P_{(s, x)} \colon (s, x) \in \bR_+ \times \bR\} \subset \mathfrak{P}(\Omega)\) is said to be a \emph{strong Markov family} if \((t, x) \mapsto P_{(t, x)}\) is Borel and the strong Markov property holds, i.e., for every \((s, x) \in \bR_+ \times \bR\) and every finite stopping time \(\tau \geq s\), 
	\[
	P_{(s, x)} (\,\cdot\, | \cF_\tau) (\omega) = \omega \otimes_{\tau (\omega)} P_{(\tau (\omega), \omega (\tau (\omega)))}
	\]
for \(P_{(s, x)}\)-a.a. \(\omega \in \Omega\).
\end{definition}

Further, we introduce a correspondence \(\mathcal{K} \colon \bR_+ \times \bR  \twoheadrightarrow \mathfrak{P}(\Omega)\) by 
\[
\cK (t, x) := \cC (t, \bx),
\]
where \(\bx \in \Omega\) is the constant function \(\bx (s) = x\) for all \(s \in \bR_+\).

\begin{theorem}[Strong Markov Selection Principle]  \label{theo: strong Markov selection}
Suppose that the Conditions \ref{cond: convexity}, \ref{cond: LG} and \ref{cond: continuity} hold.
    For every \(\psi \in \usc_b(\bR; \mathbb{R})\) and every \(t > 0\), there exists a strong Markov family \(\{P_{(s, x)} \colon (s, x) \in \bR_+ \times \mathbb{R}\}\) such that, for all \((s, x)\in \bR_+ \times \mathbb{R}\), \(P_{(s, x)} \in \cK (s, x)\) and 
    \[
    E^{P_{ (s, x) }} \big[ \psi (X_t) \big] = \sup_{P \in \cK (s, x)} E^P \big[ \psi (X_t) \big].
    \]
    In particular, for every \(x \in \bR\), 
    \[
    T_t (\psi) (x) = E^{P_{(0, x)}} \big[ \psi (X_t) \big]. % \int \psi ( \omega ) P^*_x (d \omega).
    \]
\end{theorem}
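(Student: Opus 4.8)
The strategy is a Krylov-type Markovian selection argument, adapted to the time-inhomogeneous, nonlinear setting, as carried out in \cite{krylov1973selection, SV} and in the controlled-diffusion context \cite{nicole1987compactification, hausmann86}. The key structural facts we may invoke are: (i) for fixed $t>0$ and $\psi \in \usc_b(\bR;\bR)$, the correspondence $(s,x) \mapsto \cK(s,x)$ is compact-valued and upper hemicontinuous (this is the regularity of $\cR$, resp.\ $\cC$, established in Section~\ref{sec: regularity r} under Conditions~\ref{cond: convexity}, \ref{cond: LG}, \ref{cond: continuity}); (ii) the family $\cK$ is stable under the conditioning and pasting operations along finite stopping times (this is essentially the content of the dynamic programming principle from \cite[Theorem~2.4]{CN22}, which underlies Proposition~\ref{prop: markov property}); (iii) by Berge's maximum theorem, for any bounded upper semicontinuous $\Phi$ on $\Omega$, the map $(s,x) \mapsto \sup_{P \in \cK(s,x)} E^P[\Phi]$ is upper semicontinuous and the set of maximizers $\cK^*(s,x)$ is nonempty and compact, and the maximizer correspondence is itself upper hemicontinuous with measurable graph; in particular, by the measurable selection theorem, it admits a Borel selection $(s,x) \mapsto P_{(s,x)}$.

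First I would reduce the problem to a single fixed $t>0$ and set up the iterative refinement. Start with $\cK_0 := \cK$. Given a countable family of bounded upper semicontinuous functionals $\Phi_1, \Phi_2, \ldots$ on $\Omega$ which is rich enough to separate points of $\mathfrak P(\Omega)$ (e.g.\ generated by $P \mapsto E^P[\prod_k g_k(X_{q_k})]$ for $g_k$ in a countable dense subset of $C_b(\bR;\bR)$ and $q_k$ rational, together with the distinguished functional $\Phi_0 := \psi(X_t)$ which we put first so the final family maximizes $T_t(\psi)$), define inductively $\cK_{n+1}(s,x) := \{P \in \cK_n(s,x) : E^P[\Phi_n] = \sup_{Q \in \cK_n(s,x)} E^Q[\Phi_n]\}$. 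At each stage one checks that $\cK_{n+1}$ inherits compact-valuedness and upper hemicontinuity from $\cK_n$ (Berge, using that $\cK_n$ is compact-valued and u.h.c.\ and that $(s,x) \mapsto \sup_{Q\in\cK_n} E^Q[\Phi_n]$ is u.s.c.), and — the crucial point — that $\cK_{n+1}$ is still stable under conditioning and pasting along finite stopping times $\tau \geq s$. This stability is what makes the limit correspondence $\cK_\infty := \bigcap_n \cK_n$ (which is nonempty, compact-valued, and has measurable graph) consist of measures satisfying the strong Markov property: any Borel selection $(s,x)\mapsto P_{(s,x)}$ of $\cK_\infty$ automatically satisfies $P_{(s,x)}(\cdot\mid\cF_\tau)(\omega) = \omega \otimes_{\tau(\omega)} P_{(\tau(\omega),\omega(\tau(\omega)))}$ a.s., because the conditioned kernel and the pasted measure agree on the separating family $\{\Phi_n\}$ by construction of the successive maximizer sets. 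Since $\Phi_0 = \psi(X_t)$ was maximized at the first step, $E^{P_{(s,x)}}[\psi(X_t)] = \sup_{P\in\cK(s,x)}E^P[\psi(X_t)]$ for all $(s,x)$, and in particular $T_t(\psi)(x) = E^{P_{(0,x)}}[\psi(X_t)]$ since $\cK(0,x) = \cC(0,x) = \cR(x)$ and $\cE^x(\psi(X_t)) = \sup_{P\in\cR(x)}E^P[\psi(X_t)]$.

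The main obstacle, and where the argument requires genuine care rather than bookkeeping, is the stability of the refined correspondences $\cK_{n+1}$ under conditioning and pasting. Concretely, one must show: if $P \in \cK_n(s,x)$ maximizes $E^{\cdot}[\Phi_n]$ over $\cK_n(s,x)$, and $\tau \geq s$ is a finite stopping time, then (a) $P$-a.e.\ the regular conditional kernel $\omega \mapsto P(\cdot\mid\cF_\tau)(\omega)$, shifted appropriately, lies in $\cK_n(\tau(\omega), \omega(\tau(\omega)))$ and moreover maximizes $E^{\cdot}[\Phi_k]$ there for $k < n$; and (b) conversely, pasting a maximizing kernel back onto $P$ up to time $\tau$ stays in $\cK_n$ and preserves all the earlier maximization properties. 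Part (a) for $\cK_0$ is the disintegration property of $\cC$ from \cite{CN22}; propagating it through the maximization steps uses the standard flow-property argument (Krylov's lemma): if the conditioned pieces did not maximize on a set of positive measure one could re-paste an improvement and strictly increase $E^P[\Phi_k]$, contradicting optimality at stage $k$. Part (b) is the pasting stability of $\cC$, again from the dynamic programming machinery of \cite{CN22}. A secondary technical point is measurability: one needs the maximizer correspondences to have Borel graph at each stage so that the final intersection does and a Borel selection exists — this follows from Berge's theorem together with \cite[Lemma~2.11 / 2.13]{CN22}-type arguments, but should be stated explicitly. Once these stability and measurability facts are in place, the selection $(s,x)\mapsto P_{(s,x)}$ and its strong Markov property follow as above, completing the proof.
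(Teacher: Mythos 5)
Your high-level plan is the same as the paper's: iterative refinement by successive maximization, \`a la Krylov/Stroock--Varadhan, with stability under conditioning and pasting carried through each stage (the paper's Lemma~\ref{lem: U to U*}), terminating in a measurable singleton-valued correspondence $\cK^*_\infty$ whose selection is automatically strong Markov. There is, however, a genuine gap in your implementation that the paper is careful to avoid.

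You propose to maximize over ``a countable family $\{\Phi_n\}$ rich enough to separate points of $\mathfrak P(\Omega)$,'' e.g.\ multi-time cylinder functionals $\prod_k g_k(X_{q_k})$. The intent is to get singleton-valuedness of $\cK_\infty$ for free from the separating property. But the stability argument does not go through for such functionals. The pasting/conditioning identities in Lemma~\ref{lem: U to U*} rest on the observation that, for a single-time functional $\phi(X_T)$ and a stopping time $\tau<T$, the pasted expectation satisfies $E^{\delta_\omega\otimes_{\tau(\omega)}Q}[\phi(X_T)]=E^{Q}[\phi(X_T)]$ whenever $Q$-a.s.\ $X_{\tau(\omega)}=\omega(\tau(\omega))$, because the only dependence on the past is through the value at $\tau(\omega)$. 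For a multi-time functional $\Phi=\prod_k g_k(X_{q_k})$ with some $q_k<\tau(\omega)$, one instead gets $E^{\delta_\omega\otimes_{\tau(\omega)}Q}[\Phi]=\prod_{q_k<\tau(\omega)}g_k(\omega(q_k))\cdot E^{Q}[\prod_{q_k\geq\tau(\omega)}g_k(X_{q_k})]$, whereas $E^{Q}[\Phi]=\prod_{q_k<\tau(\omega)}g_k(\omega(\tau(\omega)))\cdot E^{Q}[\prod_{q_k\geq\tau(\omega)}g_k(X_{q_k})]$. These differ, and moreover the factor $\prod g_k(\omega(q_k))$ may vanish or change sign, so the argument by which the paper concludes $P(A)=0$ (positivity of the integrand on the exceptional set $A$) breaks down. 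In short: picking a separating family does not come for free; it would require a different stability lemma.

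The paper sidesteps this by maximizing only over single-time functionals $f_N(X_{\lambda_N})$, for which the stability lemma has a clean proof, and pays the price that such a family does \emph{not} separate laws on $\Omega$. It then recovers uniqueness by a separate induction: two measures $P,Q\in\cK^*_\infty(t,x)$ agree on all one-dimensional marginals, and agreement on $n$-dimensional marginals is bootstrapped to $(n+1)$-dimensional marginals using the stability of $\cK^*_\infty$ under conditioning at an intermediate time together with the \emph{convexity} of the values $\cK^*_\infty(s,y)$ (one needs a $\mathcal{G}_n$-average of conditional kernels to land back in $\cK^*_\infty(t_n,\omega(t_n))$). That convexity comes from Lemma~\ref{lem: r^* compact}, which in turn requires Condition~\ref{cond: convexity} on the set $\{(b(f,x),a(f,x))\colon f\in F\}$. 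Your write-up omits this convexity ingredient entirely, and it is not a cosmetic detail: without it, agreement on single-time marginals need not propagate to joint distributions, and without the restriction to single-time maximizations the stability lemma fails as explained above. To repair your argument you should either adopt the paper's choice of single-time test functionals plus the convexity-based induction, or rework the stability lemma from scratch for your larger class of functionals (which I do not think succeeds in the stated generality).
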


In general, the set \(\cK\) contains non-Markovian laws (see Remark~\ref{rem: nonMarkov laws}). 
In this regard, it is interesting that the supremum of \(P \mapsto E^P [ \psi (X_t) ]\) over \((s, x) \mapsto \cK (s, x)\) is attained at a strong Markov family. We emphasise that the strong Markov selection depends on the input function \(\psi\) and the time \(t > 0\).

At first glance, it appears that abstract selection theorems only provide measurability in the initial value, as results on continuous selection, like Michael's theorem \cite[Theorem 3.2]{michael}, seem not applicable.
Under Condition~\ref{cond: ellipticity}, the system carries enough randomness to conclude additional regularity properties. More precisely, if \(a\) is elliptic, we prove in Theorem \ref{theo: selection is Feller} below that every strong Markov selection is already a \(C_0\) and strong Feller selection in the sense explained now.

\begin{definition} [Feller Properties of time inhomogeneous Markov Families]
	\quad
	\begin{enumerate}
		\item[\textup{(i)}] We say that a strong Markov family \(\{P_{ (s, x)} \colon (s, x) \in \bR_+ \times \bR\}\) has the \emph{\(C_b\)--Feller property} if, for every \(t > 0\) and every \(\phi \in C_b (\bR; \bR)\), the map \([0, t) \times \bR \ni (s, x) \mapsto E^{P_{(s, x)}} [ \phi (X_t)]\) is continuous.
		\item[\textup{(ii)}] We say that a strong Markov family \(\{P_{ (s, x)} \colon (s, x) \in \bR_+ \times \bR\}\) has the \emph{\(C_0\)--Feller property} if, for every \(0 \leq s < t\) and every continuous \(\phi \colon \bR \to \bR\) which is vanishing at infinity, the map \(x \mapsto E^{P_{(s, x)}} [ \phi (X_t)]\) is continuous and vanishing at infinity.
		\item[\textup{(iii)}] We say that a strong Markov family \(\{P_{ (s, x)} \colon (s, x) \in \bR_+ \times \bR\}\) has the \emph{strong Feller property} if, for every \(t > 0\) and every bounded Borel function \(\phi \colon \bR \to \bR\), the map \([0, t) \times \bR \ni (s, x) \mapsto E^{P_{(s, x)}}[ \phi (X_t) ]\) is continuous. 
  		\item[\textup{(iv)}] We say that a strong Markov family \(\{P_{ (s, x)} \colon (s, x) \in \bR_+ \times \bR\}\) has the \emph{uniform strong Feller property} if, for every \(t > 0\) and every bounded Borel function \(\phi \colon \bR \to \bR\), the map \([0, t - h] \times \bR \ni (s, x) \mapsto E^{P_{(s, x)}}[ \phi (X_t) ]\) is uniformly continuous for every \(h \in (0, t)\). 
	\end{enumerate}
\end{definition}
Clearly, the uniform strong Feller property entails the strong Feller property, which itself implies the \(C_b\)--Feller property.

\begin{theorem}[Feller Selection Principle]  \label{theo: Feller selection}
Suppose that the Conditions \ref{cond: convexity}, \ref{cond: LG}, \ref{cond: continuity} and \ref{cond: ellipticity} hold.
    For every \(\psi \in \usc_b(\bR; \mathbb{R})\) and every \(t > 0\), there exists a \(C_0\) and strong Feller family \(\{P_{(s, x)} \colon (s, x) \in \bR_+ \times \mathbb{R}\}\) such that, for all \((s, x)\in \bR_+ \times \mathbb{R}\), \(P_{(s, x)} \in \cK (s, x)\) and 
    \[
    E^{P_{ (s, x) }} \big[ \psi (X_t) \big] = \sup_{P \in \cK (s, x)} E^P \big[ \psi (X_t) \big].
    \]
    In particular, for all \(x \in \bR\), 
    \[
    T_t (\psi) (x) = E^{P_{(0, x)}} \big[ \psi (X_t) \big]. % \int \psi ( \omega ) P^*_x (d \omega).
    \]
    Moreover, if the Conditions~\ref{cond: bdd} and \ref{cond: uniform ellipticity} hold in addition, then \(\{P_{(s, x)} \colon (s, x) \in \bR_+ \times \mathbb{R}\}\) is also a uniform strong Feller family. 
    
\end{theorem}

The Feller selection principle from Theorem \ref{theo: Feller selection} immediately implies the \(C_0\) and (uniform) strong \(\usc_b\)--Feller properties of the sublinear Markovian semigroup \((T_t)_{t \in \mathbb{R}_+}\), i.e., it proves Theorem~\ref{thm: new very main} (iii) and (iv).

\begin{remark} \label{rem: feller selection not possible}
We highlight the following interesting observation: continuity and linear growth conditions on \(b\) and \(a\) suffice to get the \(\usc_b\)--Feller property in the nonlinear setting but these assumptions do not suffice to select a \(
\usc_b\)--Feller family (equivalently, a \(C_b\)--Feller family by Remark~\ref{rem: schilling}), see \cite[Exercise~12.4.2]{SV} for a counterexample. In particular, the counterexample shows that Theorem~\ref{theo: Feller selection} \emph{fails} without the ellipticity assumption on~\(a\). 
\end{remark}

It turns out that for some classes of input functions \(\psi\) it is possible to select (via an explicit construction) a (time homogeneous) strong Feller family which is \emph{uniform} in \(\psi\) and \(t\). Such a uniform selection principle can be viewed as a \emph{linearization} of the sublinear expectation~\(\mathcal{E}\). 
We discuss this topic in Section~\ref{subsec: linearization} below.

Next, we comment on the proof of the \(\usc_b\)--Feller property.
Recall that for this part of Theorem~\ref{thm: new very main} we do not impose local Lipschitz or ellipticity assumptions.
In our proof we use general theory of correspondences which, from our point of view, provides a rather simple presentation of the argument. 
More precisely, since
\[
T_t (\psi) (x) = \sup_{P \in \cR(x)} E^P \big[ \psi (X_t) \big],
\]
the \(\usc_b\)--Feller property follows from Berge's maximum theorem once the correspondence \(x \mapsto \cR (x)\) is upper hemicontinuous with compact values. We have the following general result:

\begin{theorem} \label{thm: main r}
Suppose that the Conditions \ref{cond: convexity}, \ref{cond: LG} and \ref{cond: continuity} hold. Then, the correspondence
\( x \mapsto \cR (x) \) is upper hemicontinuous with compact values.
\end{theorem}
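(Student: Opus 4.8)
The plan is to establish the two required properties — compactness of the values $\cR(x)$ and upper hemicontinuity of $x \mapsto \cR(x)$ — using the topology of convergence in distribution on $\mathfrak{P}(\Omega)$ and the standard stability properties of semimartingale characteristics under weak convergence. First I would recall that, by the Markovian structure of $\Theta$ and the linear growth Condition \ref{cond: LG}, every $P \in \cR(x)$ satisfies a uniform bound on the growth of its characteristics; this yields a moment estimate of the form $E^P[\sup_{s \le t} |X_s|^p] \le K(t,p)(1 + |x|^p)$ via Burkholder--Davis--Gundy and Gronwall, locally uniformly in $x$. By Aldous' criterion (or the standard tightness criterion for continuous semimartingales with controlled characteristics), this gives relative compactness of $\bigcup_{x \in B} \cR(x)$ for every bounded $B \subset \bR$, and in particular relative compactness of $\cR(x)$ for each fixed $x$.

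The next step is to show $\cR(x)$ is closed (hence compact). Suppose $P_n \in \cR(x)$ with $P_n \Rightarrow P$. The initial-law constraint $P \circ X_0^{-1} = \delta_x$ passes to the limit trivially. For the constraint on the characteristics, I would use the martingale-problem formulation: $P \in \cR(x)$ iff $X$ and a suitable family of quadratic-type functionals are $P$-martingales with the drift/diffusion pair lying in $\Theta(t,\omega) = \{(b(f,\omega(t)), a(f,\omega(t))) : f \in F\}$ $(\llambda \otimes P)$-a.e. Concretely, one characterizes membership through the requirement that for all $\varphi \in C_c^\infty(\bR)$,
\[
\varphi(X_t) - \int_0^t \sup_{f \in F}\Big( b(f,X_s)\varphi'(X_s) + \tfrac12 a(f,X_s)\varphi''(X_s)\Big)\,ds
\]
is a $P$-submartingale while the analogous expression with $\inf_{f\in F}$ is a $P$-supermartingale — or, more cleanly, that the characteristics take values in the compact convex set $\Theta(t,\omega)$ (convex by Condition \ref{cond: convexity}, compact by continuity of $b,a$ and compactness of $F$ from Standing Assumption \ref{SA: compact}). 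Since $b, a$ are continuous, the maps $\omega \mapsto \sup_{f} b(f,\omega(t))\varphi'(\omega(t)) + \tfrac12 a(f,\omega(t))\varphi''(\omega(t))$ are continuous and bounded on bounded sets, so the submartingale/supermartingale properties are preserved under weak convergence after a standard uniform-integrability argument (using the moment bound from the first step). This shows $P \in \cR(x)$, so $\cR(x)$ is compact-valued.

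For upper hemicontinuity, since $\cR$ is compact-valued and takes values in the metrizable space $\mathfrak{P}(\Omega)$, it suffices (by the sequential characterization of u.h.c. for compact-valued correspondences, e.g.\ \cite[Theorem 17.20]{aliprantis_border}) to show: whenever $x_n \to x$, $P_n \in \cR(x_n)$, there is a subsequence with $P_{n_k} \Rightarrow P$ for some $P \in \cR(x)$. Relative compactness of $\bigcup_n \cR(x_n)$ (the $x_n$ lie in a bounded set) gives a convergent subsequence $P_{n_k} \Rightarrow P$; then exactly the same weak-convergence-of-characteristics argument as above — now with the initial condition $P \circ X_0^{-1} = \delta_x$ obtained from $x_{n_k} \to x$ — shows $P \in \cR(x)$. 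I expect the \textbf{main obstacle} to be the passage to the limit in the characteristics constraint: one must verify carefully that the $(\llambda \otimes P_n)$-a.e.\ membership $(dB^{P_n}/d\llambda, dC^{P_n}/d\llambda) \in \Theta(\cdot, X)$ survives the limit. This is precisely where Conditions \ref{cond: convexity} and \ref{cond: continuity} are essential — convexity and closedness of $\Theta(t,\omega)$ are what let one pass from "characteristics lie in $\Theta$" to a martingale-problem condition stable under weak limits, along the lines of the compactification method of \cite{nicole1987compactification} and the stability results for semimartingale problems in \cite{CN22}. I would invoke the relevant stability lemma from \cite{CN22} (whose correspondence $\cC(0,x) = \cR(x)$ is exactly ours) rather than reproving it from scratch.
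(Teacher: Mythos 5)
Your proposal follows a genuinely different route from the paper's for the crucial closedness step. The paper proves closedness of $\mathfrak{P}(\Theta)$ (Proposition \ref{prop: closedness}) by a direct argument with semimartingale characteristics: it establishes tightness of $\{P^n \circ (X, B^n, C^n)^{-1}\}$ via Aldous' criterion, shows the limiting $(B,C)$ are a.s.\ locally Lipschitz via lower semicontinuity of the Lipschitz-constant functional $\zeta_M$ together with \cite[Lemma 11.1.2]{SV}, identifies the densities as lying in $\Theta$ via Skorokhod coupling and an intersection-of-convex-hulls argument (Lemma \ref{lem: oconv inclusion theta}), and finally relates the limiting $(B,C)$ to the actual $P$-characteristics via Stricker's theorem and dual predictable projections. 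You instead propose the compactification-method style of \cite{nicole1987compactification}: encode membership in $\cR(x)$ via a sub-/super-martingale problem and pass to the weak limit by uniform integrability. Both are legitimate strategies; the paper's approach is bare-hands but completely explicit, whereas yours would require establishing the equivalence between the sub/super formulation and the differential-characteristics constraint, then verifying that both inequalities pass to the limit, then recovering that $P$ actually has absolutely continuous characteristics from the two-sided bounds. You gesture at all of these but prove none of them.

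Two concrete problems. First, your sub/super characterization has the signs reversed: if $(dB^P/d\llambda, dC^P/d\llambda) \in \Theta$, then for $\varphi \in C_c^\infty$ the process $\varphi(X_\cdot) - \int_0^\cdot \sup_f \big(b(f,X_s)\varphi'(X_s) + \tfrac12 a(f,X_s)\varphi''(X_s)\big)\,ds$ is a local \emph{super}martingale (the drift part of $\varphi(X)$ is dominated from above by the supremum), and the analogous expression with $\inf_f$ is a local \emph{sub}martingale, not the other way around as you write. Second, your final sentence proposing to ``invoke the relevant stability lemma from \cite{CN22}'' is mistaken: \cite{CN22} supplies only the relative compactness of $\cR^\circ$ (cited as \cite[Lemma 4.5]{CN22} in Proposition \ref{prop: compactness}); the closedness under the present hypotheses (continuity and linear growth, without global Lipschitz or boundedness) is precisely the new content of Proposition \ref{prop: closedness}, stated explicitly as extending \cite[Theorem 4.41]{hol16} and \cite[Theorem 2.5]{neufeld} beyond those stronger assumptions. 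There is no off-the-shelf lemma to invoke; the argument has to be carried out, and that is what the paper does. Your treatment of upper hemicontinuity by the sequential characterization, once compactness and closedness are in hand, matches the paper's proof via Proposition \ref{prop: K upper hemi and compact}.
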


\begin{remark} \label{rem: lower hemicontinuity}
It is natural to ask whether it is possible to use Theorem \ref{theo: Feller selection} to prove the continuity of the correspondence \(x \mapsto \cR(x)\) and conversely, whether one can establish continuity of \(x \mapsto \cR(x)\) to deduce the \(C_b\)--Feller property from Theorem \ref{theo: Feller selection}. 
By virtue of the generalized version of Berge's maximum theorem \cite[Theorem~1]{berge}, Theorem \ref{theo: Feller selection} implies that the correspondence
\[ 
 x \mapsto \big \{ y \in \bR \colon \exists P \in \cR(x) \text{ such that } y \leq E^P[\psi (X_t)] \big \}, \quad \psi \in C_b(\bR; \bR),\ t > 0,
\]
is lower hemicontinuous. This, however, seems not to give access to the lower hemicontinuity of \( \cR \).

One particular example where continuity of \(\cR\) is rather straightforward to verify 
is the framework of nonlinear L{\'e}vy processes from \cite{neufeld2017nonlinear}, reduced to our path-continuous setting.
That is, the case where the correspondence \( \Theta \) is independent of time and path, i.e.,
\[
\cR(x) = \big\{ P \in \fPas \colon  P \circ X_0^{-1} = \delta_{x}, \ (\llambda \otimes P)\text{-a.e. } (dB^{P} /d\llambda, dC^{P}/d\llambda) \in \Theta \big\},
\]
and the convex and compact set \( \Theta \subset \mathbb{R} \times \mathbb{R_+} \) represents the set of possible means and variances. We refer to \cite[Appendix A]{CN22} for details. It is, however, worth mentioning that compared to Theorem~\ref{theo: Feller selection}, continuity of \( \cR \) is not sufficient to deduce the \(C_0\) or the (uniform) strong \( \usc_b\)--Feller property.

In general, lower hemicontinuity appears to be difficult to verify due to its relation to martingale problems with possibly non-regular coefficients, see \cite[Remark~4.43]{hol16}, \cite[Remark~3.4]{K19} and \cite[Remark~5.4]{K21} for comments in this direction.
In an update of our paper \cite{CN22}, that appeared after the present paper was submitted, we proved lower hemicontinuity of a time- and path-dependent correspondence related to nonlinear continuous semimartingales. The proof from \cite{CN22} relies in a crucial manner on strong existence and pathwise uniqueness properties of SDEs with random coefficients that are implied by the local Lipschitz and linear growth conditions. As already mentioned in the introduction, we are not aware of such existence and uniqueness results under ellipticity assumptions. 
The following is a restatement of \cite[Theorem~4.7]{CN22} tailored to our Markovian situation. 
\begin{theorem} \label{theo: lower hemi}
    Suppose that the Conditions~\ref{cond: convexity}, \ref{cond: LG}, \ref{cond: continuity} and \ref{cond: loc Lip} hold.
    Then, the correspondence
\( x \mapsto \cR (x) \) is lower hemicontinuous.
\end{theorem}
Notice that part (ii) from Theorem~\ref{thm: new very main} follows from part (i) of the same theorem, Theorem~\ref{theo: lower hemi} and Berge's maximum theorem (\cite[Lemma~17.29]{hitchi}).
\end{remark}

\subsection{Linearization} \label{subsec: linearization}

We now present a uniform strong Feller selection principle for two types of nonlinear diffusions and certain classes of input functions.
\begin{condition} [Continuity in Control] \label{cond: continuity in control}
For every \(x \in \mathbb{R}\), \(f \mapsto a(f, x)\) is continuous.
\end{condition}
\begin{condition} [Local H\"older Continuity in Space] \label{cond: local holder}
For every \(M > 0\) there exists a constant \(\C = \C(M) > 0\) such that 
\[
| \sqrt{a} (f, x) - \sqrt{a} (f, y) | \leq \C\, | x - y |^{1/2}
\]
for all \(f \in F\) and \(x,y \in [-M, M]\).
\end{condition}
Let \(\mathbb{G}_{cx}\) be the set of all convex functions \(\psi \colon \mathbb{R} \to \mathbb{R}\) such that 
\begin{equation} \label{eq: df G_cx}
    \exists \C = \C (\psi) > 0, \ m = m(\psi) \in \mathbb{N}\colon \qquad \forall x \in \mathbb{R} \quad |\psi (x)| \leq \C (1 + |x|^m).
\end{equation}

\begin{remark}
Thanks to Lemma \ref{lem: maximal inequality} below, under Condition \ref{cond: LG}, for every \(P \in \cR(x), T \in \bR_+\) and \(\psi \in \mathbb{G}_{cx}\), it holds that \(\psi (X_T) \in L^1(P)\).
\end{remark}

Recall that a (time homogeneous) strong Markov family \(\{P_x \colon x \in \mathbb{R}\}\) is said to be \emph{strongly Feller} if \(x \mapsto E^{P_x} [ \psi (X_t) ]\) is continuous for every \(t > 0\) and every bounded Borel function \(\psi \colon \mathbb{R} \to \mathbb{R}\). 
\begin{theorem}[Uniform Strong Feller Selection Principle] \label{theo: UFSP}
    Suppose that the Conditions \ref{cond: LG}, \ref{cond: ellipticity}, \ref{cond: continuity in control} and \ref{cond: local holder} hold. 
    Furthermore, suppose that \(b \equiv 0\).
    Then, there exists a strong Feller family \(\{P^*_x \colon x \in \mathbb{R}\}\) such that, for all \(x \in \mathbb{R}\), \(P^*_x \in \cR (x)\) and 
        \begin{align}\label{eq: USFSP}
    \mathcal{E}^x ( \psi(X_T) ) = E^{P^*_x} \big[ \psi (X_T) \big] %\int \psi ( \omega (T) ) P^*_x (d \omega)
    \end{align}
    for all times \(T \in \bR_+\) and \(\psi \in \mathbb{G}_{cx}\) as defined in \eqref{eq: df G_cx}. Moreover, for each \(x \in \mathbb{R}\), \(P^*_x\) is the unique law of a solution process to the SDE
    \[
    d Y_t = \sqrt{a^*} (Y_t) d W_t, \quad Y_0 = x, 
    \]
    where \(W\) is a one-dimensional Brownian motion and \(a^* (y) := \sup \{ a (f, y) \colon f \in F\}\) for \(y \in \mathbb{R}\).
\end{theorem}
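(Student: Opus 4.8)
The plan is to establish \eqref{eq: USFSP} via a two-sided comparison. First note that, since \(b \equiv 0\) and \(a \leq a^*\), every \(P \in \cR(x)\) has characteristics \((0, C^P)\) with \(dC^P/d\llambda \leq a^*(X)\), while \(P^*_x\) corresponds to the extreme volatility \(a^*\). The existence and uniqueness of \(P^*_x\) as a solution of \(dY = \sqrt{a^*}(Y)dW\), \(Y_0 = x\), follows from Condition~\ref{cond: LG} (linear growth of \(\sqrt{a^*}\), which in turn follows because \(a^* = \sup_f a(f, \cdot)\) inherits linear growth) together with Conditions~\ref{cond: continuity in control}, \ref{cond: ellipticity} and \ref{cond: local holder}, which guarantee that \(a^*\) is continuous and locally bounded away from zero, and that \(\sqrt{a^*}\) is locally \(1/2\)-Hölder; these are exactly the hypotheses under which the one-dimensional SDE has a unique (in law) strong Markov solution by Yamada--Watanabe / Stroock--Varadhan one-dimensional theory (cf.\ \cite{SV, KaraShre}). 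The strong Feller property of \(\{P^*_x \colon x \in \mathbb{R}\}\) then comes from \cite{SV} under Conditions~\ref{cond: continuity in control}, \ref{cond: ellipticity} and \ref{cond: local holder} (this is the same mechanism invoked in Theorem~\ref{theo: selection is Feller}). Also \(P^*_x \in \cR(x)\) is clear since its differential characteristics are \((0, a^*(X)) = (b(f^*(X), X), a(f^*(X), X))\) for a measurable selector \(f^*\) with \(a(f^*(y), y) = a^*(y)\) (such a selector exists by a measurable selection argument using Condition~\ref{cond: continuity in control} and Standing Assumption~\ref{SA: compact}).

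The core of the proof is the inequality \(E^{P^*_x}[\psi(X_T)] \geq E^P[\psi(X_T)]\) for every \(P \in \cR(x)\), every bounded stopping time \(T\) and every \(\psi \in \mathbb{G}_{cx}\); the reverse inequality \(\mathcal{E}^x(\psi(X_T)) \geq E^{P^*_x}[\psi(X_T)]\) is trivial since \(P^*_x \in \cR(x)\). For the key inequality I would use \emph{convex stochastic ordering} and adapt the martingale coupling technique of Haj\'ek~\cite{hajek85}. The idea is this: under any \(P \in \cR(x)\), \(X\) is a continuous local martingale started at \(x\) with \(d\langle X \rangle_t = c^P_t\,dt\) where \(0 \leq c^P_t \leq a^*(X_t)\); under \(P^*_x\), \(X\) is a continuous martingale with \(d\langle X\rangle_t = a^*(X_t)\,dt\). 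One wants to show that \(X_T\) under \(P^*_x\) dominates \(X_T\) under \(P\) in the convex order, i.e.\ \(E^{P^*_x}[\psi(X_T)] \geq E^P[\psi(X_T)]\) for all convex \(\psi\) (of polynomial growth). Haj\'ek's argument builds, on an enlarged space, a time-changed coupling: by a Dambis--Dubins--Schwarz time change one writes \(X = \beta_{\langle X\rangle}\) for a Brownian motion \(\beta\), and one runs the ``fast clock'' \(a^*\) against the ``slow clock'' \(c^P\); since \(a^*\) is evaluated along the more spread-out path, a Gronwall/monotonicity argument and Jensen's inequality yield the convex domination. Equivalently and perhaps more cleanly, I would use the comparison result for one-dimensional martingale diffusions: among all laws with bounded quadratic variation density dominated by \(a^*(X)\), the one with density exactly \(a^*(X)\) is the maximal element for convex functionals of \(X_T\). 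The polynomial growth of \(\psi \in \mathbb{G}_{cx}\) together with the moment bound from Lemma~\ref{lem: maximal inequality} (applied under both \(P\) and \(P^*_x\), legitimate by Condition~\ref{cond: LG}) ensures all expectations are finite and that the localization in the stochastic-ordering argument passes to the limit via uniform integrability.

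Two technical points deserve care. First, \(T\) is only a bounded stopping time, not deterministic; to handle this, I would apply optional stopping inside Haj\'ek's coupling — the convex order is preserved when both martingales are stopped at the same (coupled) bounded stopping time, using that \(\psi(X_{\cdot \wedge T})\) is a submartingale by Jensen once \(X\) is a martingale, so \(E^{P}[\psi(X_T)] \leq E^{P}[\psi(X_{T'})]\) for \(T \leq T'\) deterministic, reducing to the deterministic-time case after a further domination step; alternatively one runs the coupling with the random clocks stopped at \(T\). Second, the measurable selector \(f^*\) realizing \(a^*\) must be chosen so that \((0, a^*(\cdot))\) genuinely lies in the set \(\{(b(f,\cdot), a(f,\cdot)) : f \in F\}\) pointwise; since \(b \equiv 0\) this is automatic for the first coordinate, and for the second coordinate the supremum \(a^*(y) = \sup_f a(f,y)\) is attained (compactness of \(F\), continuity of \(f \mapsto a(f,y)\)) with a Borel-measurable selector \(f^*\) by the measurable maximum theorem \cite[Theorem~18.19]{hitchi}.

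\emph{The main obstacle} I anticipate is the rigorous execution of the convex ordering step with \emph{state-dependent} volatility \(a^*(X)\) rather than a deterministic bound: Haj\'ek's original result compares against a \emph{constant} bound, and extending the coupling so that the dominating clock \(a^*\) is evaluated along the dominating (more dispersed) path — while the dominated clock is evaluated along its own path — requires either a careful pathwise construction on an enlarged probability space or an appeal to a PDE/viscosity comparison (comparing the value \(\mathcal{E}^x(\psi(X_T))\) with the solution of the linear equation \eqref{eq: intro linear PDE} driven by \(a^*\), which is itself what Section~\ref{subsec: viscosity} is set up to do). I would first try the direct martingale-coupling route following \cite{hajek85}, and fall back on the PDE comparison (using the \(C_b\)-Feller property already in hand and the viscosity characterization) if the pathwise coupling proves delicate; either way, convexity of \(\psi\) is what makes \(a^* = \sup_f a(f,\cdot)\) the extremal choice.
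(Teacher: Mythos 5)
Your proposal follows the same overall route as the paper: the reverse inequality is trivial once one has $P^*_x \in \cR(x)$, and the key inequality is a convex ordering result in the spirit of Haj\'ek. The paper isolates this ordering step as Lemma~\ref{lem: convex order}, whose proof is precisely the time-change argument you sketch. Concretely, under any $P \in \cR(x)$, since $b \equiv 0$ and $a^P \leq a^*(X)$, the clock $L_t := \int_0^t a^P_s\,ds / a^*(X_s)$ is $P$-a.s.\ continuous, strictly increasing and satisfies $L_t \leq t$; its inverse $S$ satisfies $S_t \geq t$; by Dambis--Dubins--Schwarz the time-changed process $X_S$ solves (up to time $L_T$, on an extension) the SDE with the extremal volatility $\sqrt{a^*}$, hence by pathwise uniqueness equals the comparison diffusion $Y$ on $[0,L_T]$; and since $\psi(Y)$ is a $P$-submartingale and $L_T \leq T$ is a bounded stopping time, optional sampling gives $E^P[\psi(X_T)] = E^P[\psi(X_{S_{L_T}})] = E^P[\psi(Y_{L_T})] \leq E^P[\psi(Y_T)] = E^{P^*_x}[\psi(X_T)]$. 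Two remarks on your anticipated difficulties. First, the ``main obstacle'' you flag is not actually one: Haj\'ek's Theorem~3 already compares against a state-dependent (not constant) dominating volatility; the paper's adaptation is to replace Haj\'ek's global Lipschitz hypothesis with local H\"older plus ellipticity, which still yields pathwise uniqueness for $dY = \sqrt{a^*}(Y)\,dW$. The time change automatically accounts for the state dependence because $L_T \leq T$ follows pathwise from $a^P \leq a^*(X)$. Second, the measurable selector $f^*$ is unnecessary for $P^*_x \in \cR(x)$: the definition of $\cR$ only demands that $(\llambda\otimes P^*_x)$-a.e.\ $(0, a^*(X_t)) \in \Theta(t,\cdot)$, and the supremum $a^*(y)$ is attained pointwise by compactness of $F$ and Condition~\ref{cond: continuity in control}, so $(0,a^*(y)) \in \{(b(f,y),a(f,y)) : f\in F\}$ directly, without any selection.
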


The key idea behind Theorem \ref{theo: UFSP} is a stochastic order property. Namely, as is intuitively clear, ordered diffusion coefficients imply a convex stochastic order for one-dimensional distributions. Such a result traces back to the paper \cite{hajek85} whose ideas we also adapt in the proof of Theorem~\ref{theo: UFSP}. Having said all this, it is not hard to believe that a similar result can be proved for the class of continuous increasing functions and nonlinear diffusions with volatility certainty. We think that this application is also of independent interest and therefore we give a precise statement.
\begin{condition} [Continuity of Drift] \label{cond: continuity drift}
\(b\) is continuous.
\end{condition}
\begin{condition} [Certainty, Ellipticity and H\"older Continuity of Volatility] \label{cond: holder}
There exists a \(1/2\)-H\"older continuous function \(a^* \colon \mathbb{R} \to (0, \infty)\) such that \(a (f, x) = a^* (x)\) for all \(f \in F\) and \(x \in \mathbb{R}\).
\end{condition}

\begin{theorem}[Uniform Strong Feller Selection Principle] \label{theo: UFSP 2}
    Suppose that the Conditions \ref{cond: LG}, \ref{cond: continuity drift} and \ref{cond: holder} hold. 
    Then, there exists a strong Feller family \(\{P^*_x \colon x \in \mathbb{R}\}\) such that, for all \(x \in \mathbb{R}\), \(P^*_x \in \cR (x)\) and 
      \begin{align}\label{eq: USFSP 2}
    \mathcal{E}^x ( \psi ) = E^{P^*_x} \big[ \psi \big] 
    \end{align}
    for all bounded increasing (for the pointwise order) Borel functions \(\psi \colon \Omega \to \mathbb{R}\). Moreover, for each \(x \in \mathbb{R}\), \(P^*_x\) is the unique law of a solution process to the SDE
    \[
    d Y_t = b^* (Y_t) dt + \sqrt{a^*} (Y_t) d W_t, \quad Y_0 = x, 
    \]
    where \(W\) is a one-dimensional Brownian motion and \(b^* (y) := \sup \{ b (f, y) \colon f \in F\}\) for \(y \in \mathbb{R}\).
\end{theorem}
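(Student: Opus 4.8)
The plan is to mirror the structure of the proof of Theorem \ref{theo: UFSP}, replacing the convex stochastic order argument by a \emph{monotone} (stochastic dominance) argument. Fix \(x \in \mathbb{R}\). Since \(a^*\) is \(1/2\)-Hölder, strictly positive, and \(b^*\) is continuous and of linear growth (by Conditions \ref{cond: LG} and \ref{cond: continuity drift}), the SDE \(dY_t = b^*(Y_t)\,dt + \sqrt{a^*}(Y_t)\,dW_t\), \(Y_0 = x\), has a unique (in law) weak solution; this is a classical Yamada--Watanabe--type statement (see, e.g., \cite[Ch.\ 5]{KaraShre} together with the Engelbert--Schmidt criterion for the one-dimensional case). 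Call its law \(P^*_x\). Because \(a(f,x) = a^*(x)\) for all \(f\) and \(b^* \geq b(f,\cdot)\) pointwise, one checks directly from the definition that \(P^*_x \in \cR(x)\): its differential characteristics are \((b^*(X_t), a^*(X_t))\), and \((b^*(y), a^*(y)) = (b(f,y), a(f,y))\) for any \(f\) attaining the supremum defining \(b^*(y)\) (such \(f\) exists by compactness of \(F\) and continuity of \(b\) in the first variable, which follows from Condition \ref{cond: continuity drift}; measurable selection of \(f(y)\) gives the required measurable representation). Strong Markovness of \(\{P^*_x\}\) is standard for SDE solutions with a well-posed martingale problem, and the strong Feller property follows from \cite{SV} (uniform ellipticity on compacts plus continuity of the coefficients, exactly as invoked in Theorem \ref{theo: Feller selection}).

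It remains to prove the linearization identity \eqref{eq: USFSP 2}: \(E^{P^*_x}[\psi] = \mathcal{E}^x(\psi)\) for every bounded increasing Borel \(\psi \colon \Omega \to \mathbb{R}\). The inequality \(E^{P^*_x}[\psi] \leq \mathcal{E}^x(\psi)\) is immediate since \(P^*_x \in \cR(x)\). For the reverse, fix an arbitrary \(P \in \cR(x)\); I must show \(E^P[\psi] \leq E^{P^*_x}[\psi]\). Since \(\psi\) is increasing for the pointwise order on paths, it suffices to establish a pathwise (or coupling) stochastic dominance: the law of \(X\) under \(P^*_x\) dominates the law of \(X\) under \(P\) in the usual stochastic order on \(C(\mathbb{R}_+;\mathbb{R})\). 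Here the volatility is common to all admissible laws, \(a^* (X_t)\), so under \(P\) we can write (after possibly enlarging the space) \(X_t = x + \int_0^t \beta_s\,ds + \int_0^t \sqrt{a^*}(X_s)\,dW^P_s\) with \(\beta_s \in [b_{\min}(X_s), b^*(X_s)]\) where \(b_{\min}(y) := \inf\{b(f,y)\colon f\in F\}\); in particular \(\beta_s \leq b^*(X_s)\). Following Hajek's comparison technique \cite{hajek85} — adapted here to the monotone rather than convex order — I would run the two diffusions against the \emph{same} Brownian motion: let \(Y^*\) solve \(dY^*_t = b^*(Y^*_t)\,dt + \sqrt{a^*}(Y^*_t)\,dW^P_t\), \(Y^*_0 = x\) (strong existence holds by the Yamada--Watanabe condition since \(\sqrt{a^*}\) is \(1/2\)-Hölder and \(b^*\) is continuous of linear growth; uniqueness in law gives \(\mathrm{Law}(Y^*) = P^*_x\)), and compare \(X\) (under \(P\)) with \(Y^*\) pathwise. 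The drift of \(Y^*\) dominates the drift of \(X\), and the diffusion coefficient is the same Lipschitz-in-the-weak-sense function \(\sqrt{a^*}\) evaluated along each path; the standard one-dimensional comparison theorem for SDEs with a common diffusion coefficient (e.g.\ via the local-time argument of \cite{KaraShre}, using that \(\sqrt{a^*}\) satisfies the Yamada condition) then yields \(X_t \leq Y^*_t\) for all \(t \geq 0\), \(P\)-a.s. Hence \(\psi(X_\cdot) \leq \psi(Y^*_\cdot)\) pointwise, so \(E^P[\psi] = E^P[\psi(X_\cdot)] \leq E[\psi(Y^*_\cdot)] = E^{P^*_x}[\psi]\). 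Taking the supremum over \(P \in \cR(x)\) gives \(\mathcal{E}^x(\psi) \leq E^{P^*_x}[\psi]\), completing the proof.

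The main obstacle is the pathwise comparison step for a \emph{general} \(P \in \cR(x)\): an arbitrary admissible law need not be Markovian, and its drift \(\beta_s\) is only a progressively measurable process sandwiched between \(b_{\min}(X_s)\) and \(b^*(X_s)\), not a function of the current state. To handle this one works on a common stochastic basis carrying the representation \(X_t = x + \int_0^t\beta_s\,ds + \int_0^t\sqrt{a^*}(X_s)\,dW^P_s\) and drives \(Y^*\) by the same \(W^P\); the comparison theorem one needs is the version that allows one SDE to have a non-Markovian (adapted) drift bounded above by the Markovian drift of the other, with a common diffusion coefficient satisfying the Yamada--Watanabe modulus condition — this is exactly the setting where Hajek-type arguments apply, and the \(1/2\)-Hölder hypothesis on \(\sqrt{a^*}\) in Condition \ref{cond: holder} is what makes it go through. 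A secondary technical point is extending the identity from bounded increasing functions on \(\Omega\) to justify the measurable selection \(f(\cdot)\) giving \((b^*, a^*)\) as genuine differential characteristics lying in \(\Theta\); both are routine given Conditions \ref{cond: LG}, \ref{cond: continuity drift} and the compactness of \(F\).
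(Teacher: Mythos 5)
Your proposal follows essentially the same route as the paper: construct \(P^*_x\) as the law of the SDE with drift \(b^*\) and volatility \(\sqrt{a^*}\), obtain the strong Feller property from Stroock--Varadhan, note the easy inequality \(E^{P^*_x}[\psi] \leq \cE^x(\psi)\), and close the gap by a pathwise comparison argument. The ``main obstacle'' you flag — that an arbitrary \(P \in \cR(x)\) has a merely progressively measurable drift rather than a Markovian one — is not actually an obstacle: the paper invokes \cite[Theorem~VI.1.1]{ikeda2014stochastic}, whose hypotheses are formulated precisely for a progressively measurable drift on one side bounded by a Markovian drift on the other, with a common diffusion coefficient satisfying a Yamada-type modulus condition (here supplied by Condition~\ref{cond: holder}). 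Your description of the required comparison theorem is accurate; you just did not land on the standard citation. One minor point: verifying \(P^*_x \in \cR(x)\) only requires the pointwise membership \((b^*(y), a^*(y)) \in \{(b(f,y),a(f,y)) : f \in F\}\), which holds because the supremum defining \(b^*\) is attained by compactness of \(F\) and continuity of \(b(\cdot, y)\); no separate measurable selection of the maximizing \(f\) is needed. The paper also cites \cite[Theorem~4.53]{engelbert1991strong} for pathwise uniqueness and Berge's maximum theorem for continuity of \(b^*\), but these match your stated ingredients.
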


Similar to the linear case, nonlinear Markovian semigroups have a close relation to solutions of certain PDEs comparable to Kolmogorov's equation. In the following section we make this relation more precise. In particular, we discuss the connection of the sublinear Markovian semigroup to its \emph{pointwise generator}.

\subsection{A nonlinear Kolmogorov Equation} \label{subsec: viscosity}
Let us start with a formal introduction to the class of nonlinear PDEs under consideration. We fix a finite time horizon \(T > 0\).
For \((t, x,\phi) \in \mathbb{R}_+ \times \mathbb{R} \times C^{1, 2}(\mathbb{R}_+ \times \mathbb{R}; \bR)\), we define
\begin{align*}
    G(t, x, \phi) := \sup \Big\{b (f, x) \partial_x \phi (t,x)
    + \tfrac{1}{2} a (f, x) \partial^2_x\phi (t,x) \colon f \in F \Big\}.
\end{align*}
In our paper \cite{CN22} we proved, under suitable assumptions on \( b\) and \(a\), that the value function
\[
v (t, x) := \sup_{P \in \cR (x)} E^P \big[ \psi (X_{T - t})\big], \quad (t, x) \in [0, T] \times \mathbb{R},
\]
is a \emph{weak-sense viscosity solution} to the nonlinear Kolmogorov type partial differential equation
\begin{equation} \label{eq: PDE}
\begin{cases}   
\partial_t v (t, x) + G (t, x, v) = 0, & \text{for } (t, x) \in [0, T) \times \mathbb{R}, \\
v (T, x) = \psi (x), & \text{for } x \in \bR,
\end{cases}
\end{equation}
where \(\psi  \in C_b(\mathbb{R}; \mathbb{R})\).
Recall that a function \(u \colon [0, T] \times \bR \to \mathbb{R}\) is said to be a \emph{weak sense viscosity subsolution} to \eqref{eq: PDE} if the following two properties hold:
\begin{enumerate}
    \item[\textup{(a)}] \(u(T, \cdot) \leq \psi\);
\item[\textup{(b)}]
\(
\partial_t \phi (t, x) + G (t, x, \phi) \geq 0
\)
for all \(\phi \in  C^{1, 2}([0, T] \times \bR; \bR)\) such that \(\phi \geq u\) and \(\phi (t, x) = u(t, x)\) for some \((t, x) \in [0, T) \times \bR \). 
\end{enumerate}
A \emph{weak sense viscosity supersolution} is obtained by reversing the inequalities. Further, \(u\) is called \emph{weak sense viscosity solution} if it is a weak sense viscosity sub- and supersolution. 
Furthermore,  \( u \) is called \emph{viscosity subsolution} if it is both, a weak sense viscosity subsolution, and upper semicontinuous. The notions of viscosity supersolution and viscosity solution are defined accordingly.

Using Theorem~\ref{thm: new very main}, we are able to present conditions for \(v\) to be a viscosity solution (with regularity). We emphasise that we do not require Lipschitz regularity of the coefficients in all cases.
\begin{theorem} \label{thm: new viscosity no unique}
	Suppose that the Conditions \ref{cond: convexity}, \ref{cond: LG} and \ref{cond: continuity} hold. Further, assume either Condition~\ref{cond: loc Lip} or \ref{cond: ellipticity}. Then, the value function \(v\) is a viscosity solution to the nonlinear PDE \eqref{eq: PDE}.
\end{theorem}

When the local Lipschitz condition is strengthened to a global one, classical comparison results (see, e.g., \cite{hol16,nisio,Pham}) imply that the value function is the unique bounded viscosity solution to the PDE~\eqref{eq: PDE}.

As a referee has pointed out, such a uniqueness result can also be proved by viscosity methods, i.e., without probabilistic arguments for the continuity of the value function. In particular, the continuity of the value function can be established en passant. To detail the strategy, denote the upper and lower envelops of \(v\) by \(v^*\) and \(v_*\), i.e., we set 
\[
v^* (t, x) := \limsup_{ (s, y) \to (t, x) } v (s, y), \qquad v_* (t, x) := \liminf_{ (s, y) \to (t, x) } v (s, y).
\] 
Notice that \(v^*\) is upper semicontinuous while \(v_*\) is lower semicontinuous. 
The key observation is provided by the following lemma. % whose proof is given in Section~\ref{sec: pf en upper lower} below.

\begin{condition}[Lipschitz Continuity in Space] \label{cond: Lipschitz continuity}
    There exists a constant \(\C > 0\) such that 
    \[
    |b(f, x) - b(f, y)| + |\sqrt{a} (f, x) - \sqrt{a} (f, y)| \leq \C\, |x - y|, 
    \]
    for all \(f \in F\) and \(x, y \in \mathbb{R}\).
\end{condition}

\begin{lemma} \label{lem: en upper lower}
Suppose that the Conditions~\ref{cond: LG}, \ref{cond: continuity} and \ref{cond: Lipschitz continuity} hold. Then, \(v^*\) is a viscosity subsolution and \(v_*\) is a viscosity supersolution to the nonlinear PDE \eqref{eq: PDE}.
\end{lemma} 
Under the hypothesis of Lemma~\ref{lem: en upper lower}, a classical (strong) comparison result as given by \cite[Theorem~4.4.5]{Pham} yields that \(v^* \leq v_*\), which entails that
\(v = v^* = v_*.
\)
It follows that \(v\) is a viscosity solution to~\eqref{eq: PDE}, in fact the unique bounded viscosity solution (again by the comparison result). In summary, we proved the following uniqueness result.
\begin{theorem} \label{theo: viscosity with ell}
Suppose that the Conditions \ref{cond: LG}, \ref{cond: continuity} and \ref{cond: Lipschitz continuity} hold.
Then, the value function \(v\) is the unique bounded viscosity solution to the nonlinear PDE \eqref{eq: PDE}.
\end{theorem}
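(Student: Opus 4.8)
The plan is to derive Theorem \ref{theo: viscosity with ell} by combining the two ingredients already in place: the viscosity solution property and comparison principle from Theorem \ref{thm: main viscosity}, and the $C_b$--Feller property of $(T_t)_{t \in \bR_+}$ from Theorem \ref{thm: new very main}. Under the present hypotheses (Conditions \ref{cond: convexity}, \ref{cond: LG}, \ref{cond: continuity} and \ref{cond: Lipschitz continuity} together with Condition \ref{cond: ellipticity}), Theorem \ref{thm: main viscosity} already gives that $v$ is a weak-sense viscosity subsolution, and that uniqueness of bounded viscosity solutions holds \emph{provided} $v$ is both upper and lower semicontinuous. So the entire burden of the proof is to show that $v$ is continuous on $[0,T] \times \bR$; once continuity is established, $v$ is in particular both a viscosity sub- and supersolution, hence \emph{the} unique bounded viscosity solution by the comparison principle invoked in Theorem \ref{thm: main viscosity}.

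To prove continuity of $v$, first recall that $v(t,x) = T_{T-t}(\psi)(x) = \cE^x(\psi(X_{T-t}))$. Continuity in the spatial variable $x$, for fixed $t$, is exactly the statement $T_{T-t}(\psi) \in C_b(\bR;\bR)$, which is precisely the $C_b$--Feller property from Theorem \ref{thm: new very main} (applicable since Condition \ref{cond: ellipticity} is assumed here). For joint continuity, I would fix $(t_0,x_0)$ and a sequence $(t_n,x_n) \to (t_0,x_0)$ and estimate $|v(t_n,x_n) - v(t_0,x_0)|$ via the triangle inequality as
\[
|v(t_n,x_n) - v(t_0,x_0)| \le |T_{T-t_n}(\psi)(x_n) - T_{T-t_0}(\psi)(x_n)| + |T_{T-t_0}(\psi)(x_n) - T_{T-t_0}(\psi)(x_0)|.
\]
The second term tends to $0$ by spatial continuity of $T_{T-t_0}(\psi)$. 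For the first term, I would use the semigroup property $T_{T-t_n} = T_{T-t_0} \, T_{|t_n - t_0|}$ (assuming $t_n \le t_0$, say, the other case being symmetric), monotonicity, and constant-preservation of the operators to bound it by a modulus-of-continuity expression $\sup_x |T_{|t_n-t_0|}(\psi)(x) - \psi(x)|$ composed under $T_{T-t_0}$; then I would control $\cE^x(|\psi(X_s) - \psi(x_0)|)$ by combining the uniform continuity of the bounded function $\psi$ on compacts with the moment estimate from Lemma \ref{lem: maximal inequality} (which, under Condition \ref{cond: LG}, gives $\cE^x(\sup_{r \le s}|X_r - x|^2) \le C s (1 + |x|^2)$ or similar), so that $X_s$ is close to its starting point with high probability uniformly over $x$ in a bounded set as $s \downarrow 0$. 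This is the standard route and each estimate is routine.

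The main obstacle --- and the only place where real work is needed --- is making the time-continuity estimate genuinely \emph{uniform} in the spatial argument over the relevant range, since the sequence $x_n$ is bounded but the moment bound in Lemma \ref{lem: maximal inequality} degrades in $|x|$. Since $x_n \to x_0$, one restricts to a compact neighbourhood of $x_0$, on which $\psi$ is uniformly continuous and the linear-growth moment bound is uniform, so this difficulty is manageable; still, it has to be handled carefully, keeping track of the interplay between the contribution from small displacements of $X$ (handled by uniform continuity of $\psi$ on the compact set) and the tail contribution (handled by the boundedness of $\psi$ and the Markov/Chebyshev bound from the moment estimate). Once joint continuity of $v$ is in hand, the conclusion is immediate: $v$ is a bounded viscosity solution by Theorem \ref{thm: main viscosity}, and it is the unique one by the comparison principle contained in that theorem, completing the proof of Theorem \ref{theo: viscosity with ell}.
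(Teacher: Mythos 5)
Your proposal is correct and follows the same overall strategy as the paper's proof: reduce everything to joint continuity of $v$ on $[0,T]\times\bR$ via Theorem~\ref{thm: main viscosity}, obtain spatial continuity from the $C_b$--Feller property in Theorem~\ref{thm: new very main}, and establish time-continuity uniformly over compact spatial sets, after which the triangle inequality finishes. Where you diverge is the mechanism behind the time-continuity estimate. You propose to factor $T_{T-t_n}=T_{T-t_0}T_{t_0-t_n}$ via the semigroup property, use monotonicity and constant preservation of $T_{T-t_0}$, and then show the small-time convergence $\sup_{|y|\le r}|T_s(\psi)(y)-\psi(y)|\to 0$ as $s\downarrow 0$. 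The paper instead bounds $|\cE^x(\psi(X_{T-t}))-\cE^x(\psi(X_{T-s}))|\le \sup_{P\in\cR(x)}E^P[|\psi(X_{T-t})-\psi(X_{T-s})|]$ directly and estimates the right-hand side on the three events $\{|X_{T-t}-X_{T-s}|\le\varepsilon,\, |X_{T-t}|\vee|X_{T-s}|\le r\}$, the corresponding tail event, and $\{|X_{T-t}-X_{T-s}|>\varepsilon\}$, using the path-increment bound $\sup_{x\in K}\sup_{P\in\cR(x)}E^P[|X_t-X_s|^m]\le \C|t-s|^{m/2}$ from Lemma~\ref{lem: maximal inequality}. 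The direct route is marginally cleaner: it avoids invoking the semigroup property and needs only one application of Lemma~\ref{lem: maximal inequality}, whereas your route requires a tail bound on $X_{T-t_0}$ to reduce to a compact set and then a second, small-time moment bound to control $|X_s-y|$ for $y$ ranging over that compact. Both are valid, and you correctly flag the only delicate point, namely the $|x|$-dependence of the moment estimate. Two small points to fix if you flesh this out: the quantity to control is $\cE^y(|\psi(X_s)-\psi(y)|)$ for $y$ in a compact (your $\cE^x(|\psi(X_s)-\psi(x_0)|)$ looks like a typo), and when $t_n>t_0$ the factorization reads $T_{T-t_0}=T_{T-t_n}T_{t_n-t_0}$, so the outer operator then varies with $n$; this is harmless but the tail estimate must therefore be uniform in $n$, which Lemma~\ref{lem: maximal inequality} again supplies.
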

Notice that Theorem~\ref{theo: viscosity with ell} does not require the convexity Condition~\ref{cond: convexity}.

\begin{remark} 
For a sublinear Markovian semigroup \( (T_t)_{t \in \bR_+} \) on the convex cone \( \cH \), its \emph{pointwise infinitesimal generator} \( A \colon  \cD(A) \to \cH \)  is defined by 
\begin{align*}
A(\phi)(x) & := \lim_{t \to 0} \frac{T_t(\phi)(x) - \phi(x)}{t}, \quad x \in \bR, \ \phi \in \cD(A), \\
\cD(A) &:= \Big\{ \phi \in \cH \colon \exists g \in \cH \text{ such that }  \lim_{t \to 0} \frac{T_t(\phi)(x) - \phi(x)}{t} = g(x) \ \ \forall x \in \bR \Big\}.
\end{align*}
For the sublinear Markovian semigroup \( (T_t)_{t \in \bR_+} \) associated to a nonlinear diffusion, following the proof of \cite[Lemmata~7.9, 7.12]{CN22} shows that, under Conditions \ref{cond: convexity}, \ref{cond: LG} and \ref{cond: continuity}, the inclusion   \( C^2_b(\bR; \bR) \subset \cD(A) \) holds with
\[ A (\phi) (x) = \sup \Big\{b (f, x) \phi' (x)
    + \tfrac{1}{2} a (f, x) \phi'' (x) \colon f \in F \Big\}, \quad x \in \bR,
\]
for \( \phi \in  C^2_b(\bR; \bR) \). Hence, the Theorems \ref{thm: new viscosity no unique} and \ref{theo: viscosity with ell} link the sublinear semigroup \((T_t)_{t \in \bR_+}\) to its (pointwise) generator.

\end{remark}

In the following, we show that for convex input functions of polynomial growth the value function solves a \emph{linear} PDE in a unique manner.
More precisely, we prove that for \(\psi \in \mathbb{G}_{cx}\) the value function \(v\) is the unique viscosity and classical solution to the linear PDE
\begin{equation} \label{eq: linear PDE}
\begin{cases}   
\partial_t u (t, x) + \frac{1}{2}a^*(x) \partial_x^2 u(t,x) = 0, & \text{for } (t, x) \in [0, T) \times \mathbb{R}, \\
u (T, x) = \psi (x), & \text{for } x \in \bR,
\end{cases}
\end{equation}
where \(a^*(x) := \sup\{ a(f,x) \colon f \in F \} \).

\begin{condition}[Local Lipschitz Continuity in Space] \label{cond: local Lipschitz continuity}
    For every \(M > 0\) there exists a constant \(\C = \C(M) > 0\) such that 
    \[
    |\sqrt{a} (f, x) - \sqrt{a} (f, y)| \leq \C |x - y|, 
    \]
    for all \(f \in F\) and \(x, y \in [- M, M]\).
\end{condition}

We say that a function \(g \colon [0, T] \times \mathbb{R} \to \mathbb{R}\) is of {\em \(m\)-polynomial growth} if there exists a constant \(\C > 0\) such that \(|g (t, x)| \leq \C (1 + |x|^m)\) for all \((t, x) \in [0, T] \times \mathbb{R}\). 

\begin{corollary} \label{coro: uni con cx}
    Suppose that the Conditions \ref{cond: LG}, \ref{cond: ellipticity}, \ref{cond: continuity in control} and \ref{cond: local Lipschitz continuity} hold. Furthermore, suppose that \(b \equiv 0\).
  If \( \psi \in \mathbb{G}_{cx} \) is of \(m\)-polynomial growth, then the value function \(v\) is the unique viscosity and the unique classical solution %\footnote{in the sense of \cite{FK}}  
 of \(m\)-polynomial growth to the linear PDE \eqref{eq: linear PDE}.
  
\end{corollary}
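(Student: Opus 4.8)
The plan is to transfer the problem, via the linearization of Theorem~\ref{theo: UFSP}, to the classical Cauchy problem for the non-degenerate linear parabolic operator $\partial_t+\tfrac12 a^*(x)\partial_x^2$, and then to combine standard interior parabolic regularity with a probabilistic uniqueness argument. First note that Condition~\ref{cond: local Lipschitz continuity} implies Condition~\ref{cond: local holder}, since on $[-M,M]$ one has $|x-y|\le(2M)^{1/2}|x-y|^{1/2}$; hence Theorem~\ref{theo: UFSP} applies and yields a time-homogeneous strong Feller (hence strong Markov) family $\{P^*_x\colon x\in\bR\}$ with $P^*_x\in\cR(x)$, where $P^*_x$ is the law of the unique solution of $dY_t=\sqrt{a^*}(Y_t)\,dW_t$, $Y_0=x$. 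Applying \eqref{eq: USFSP} with the constant (bounded) stopping time $T-t$ and the terminal function $\psi\in\mathbb{G}_{cx}$, one gets
\[
v(t,x)=\sup_{P\in\cR(x)}E^P\bigl[\psi(X_{T-t})\bigr]=\cE^x\bigl(\psi(X_{T-t})\bigr)=E^{P^*_x}\bigl[\psi(X_{T-t})\bigr],\qquad(t,x)\in[0,T]\times\bR.
\]
Moreover, $\sqrt{a^*}=\sup_{f\in F}\sqrt{a(f,\cdot)}$ inherits the uniform local Lipschitz bound of Condition~\ref{cond: local Lipschitz continuity}, is of linear growth by Condition~\ref{cond: LG}, and $a^*>0$ by Condition~\ref{cond: ellipticity}; by Lemma~\ref{lem: maximal inequality} and Condition~\ref{cond: LG}, $\sup_{s\le T}E^{P^*_x}[|X_s|^q]\le\C_q(1+|x|^q)$ for every $q\ge1$, so in particular $v$ is of $m$-polynomial growth.

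Next I would show that $v$ is a continuous viscosity solution of \eqref{eq: linear PDE}. Continuity of $(t,x)\mapsto v(t,x)$ on $[0,T]\times\bR$, together with $v(T,\cdot)=\psi$, follows from the weak continuity of $x\mapsto P^*_x$ (Stone's theorem, as in the proof of Proposition~\ref{prop: lower hemicontinuity markov}), the continuity of the paths of $X$, and the uniform integrability provided by the moment bounds above. Since $X$ is a time-homogeneous Markov process under $\{P^*_x\}$ with generator $\tfrac12 a^*(\cdot)\partial_x^2$, the flow identity $v(t,x)=E^{P^*_x}[v(t+h,X_h)]$ holds for $t+h\le T$; combining it with Dynkin's formula applied to a $C^{2,3}$ test function touching $v$ from above, respectively below, at an interior point gives that $v$ is a viscosity solution of \eqref{eq: linear PDE}. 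Then, on any bounded cylinder $Q$ with $\overline Q\subset[0,T)\times\bR$, equation \eqref{eq: linear PDE} is uniformly parabolic with Hölder continuous coefficient $a^*$ (it is even locally Lipschitz), so by interior Schauder theory for linear parabolic equations (cf.\ \cite{FlemingSoner} and the references therein) — which one makes rigorous by smoothing $a^*$, solving the regularized problems classically on cylinders, and passing to the limit using stability of viscosity solutions — every continuous viscosity solution is of class $C^{1,2}$ in the interior and solves the equation classically there. Hence $v\in C^{1,2}([0,T)\times\bR)\cap C([0,T]\times\bR)$ is a classical, and in particular a viscosity, solution of \eqref{eq: linear PDE} of $m$-polynomial growth.

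It remains to prove uniqueness. As a classical solution of $m$-polynomial growth is in particular a viscosity solution of $m$-polynomial growth, it suffices to show that \eqref{eq: linear PDE} admits at most one viscosity solution of $m$-polynomial growth, and I would establish this by probabilistic verification. Let $u$ be any such solution; the interior regularity argument above applies verbatim to $u$, so $u\in C^{1,2}([0,T)\times\bR)$, and by interior gradient estimates $\partial_x u$ is of polynomial growth on each strip $[0,t']\times\bR$ with $t'<T$. Let $Y$ be the diffusion started from $x$ at time $t$, whose law shifted to start at $0$ is $P^*_x$. Itô's formula together with the PDE shows that $s\mapsto u(s,Y_s)$ is a local martingale on $[t,T)$; localizing at exit times of large balls and using the linear growth of $\sqrt{a^*}$ with the moment and gradient bounds to pass to the limit, it is a true martingale up to any $t'<T$, whence $u(t,x)=E[u(t',Y_{t'})]$. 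Letting $t'\uparrow T$ and using continuity of $u$ up to $T$ with $u(T,\cdot)=\psi$ and dominated convergence gives $u(t,x)=E[\psi(Y_T)]=E^{P^*_x}[\psi(X_{T-t})]=v(t,x)$. Alternatively, one may run a penalized comparison argument for \eqref{eq: linear PDE}: the linear growth of $a^*$ makes $\chi(t,x):=e^{\lambda(T-t)}(1+|x|^2)^{m+1}$ a global strict supersolution for $\lambda$ large, so for a viscosity subsolution $u_1$ and supersolution $u_2$ of $m$-polynomial growth the function $u_1-u_2-\varepsilon\chi$ attains its maximum over $[0,T]\times\bR$ at $t=T$, where it is nonpositive, and letting $\varepsilon\downarrow0$ yields $u_1\le u_2$.

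\textbf{Main obstacle.} The delicate point is the uniqueness statement, because $a^*$ is only of linear growth — hence unbounded — and merely locally Lipschitz, so off-the-shelf comparison principles for viscosity solutions, typically stated for bounded coefficients or bounded domains, do not apply directly, and one must control the behaviour at spatial infinity; this is exactly where the linear growth of $\sqrt{a^*}$ enters, either through the penalization $\chi$ or through the localization in the Itô argument. A secondary technicality is the upgrade from viscosity to classical solutions in the presence of a spatially unbounded (though locally Hölder and locally elliptic) coefficient, which is handled by the localization described above.
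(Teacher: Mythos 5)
Your overall strategy mirrors the paper's: after noting that Condition~\ref{cond: local Lipschitz continuity} implies Condition~\ref{cond: local holder}, you invoke Theorem~\ref{theo: UFSP} with the bounded stopping time $T-t$ to linearize $v(t,x) = E^{P^*_x}[\psi(X_{T-t})]$, reducing the statement to the classical linear parabolic Cauchy problem for $\partial_t + \tfrac12 a^*\partial_x^2$. The paper then dispatches the three remaining claims purely by citation — viscosity existence and uniqueness from \cite[Theorem 1]{FK}, classical solvability from \cite[Theorem~2.7]{JaTy06}, and classical uniqueness from \cite[Corollary 6.4.4]{friedman75} — whereas you attempt to re-derive them by hand via a Dynkin/flow argument, interior Schauder regularity, and a probabilistic verification. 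This is a legitimate alternative, and your Itô verification for uniqueness among viscosity solutions is in fact the cleanest part of the plan (you need no gradient estimates there: the $m$-polynomial growth of $u$ together with moment bounds of order $q>m$ already kill the boundary term $E\bigl[u(\tau_n, Y_{\tau_n})\mathbf{1}_{\{\tau_n < t'\}}\bigr]$ after localization).

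Two soft spots need tightening. First, the upgrade from ``continuous viscosity solution of polynomial growth'' to ``$C^{1,2}$ classical solution'' is correct in substance, but the smoothing-plus-stability sketch is incomplete as written: after solving the regularized Dirichlet problems on a compact cylinder with boundary data taken from $u$ and passing to the limit, one obtains a classical (hence viscosity) solution with the same boundary data, and to identify this limit with $u$ one must invoke a comparison principle on that bounded cylinder (available since $a^*$ is bounded and uniformly elliptic there) — a step you do not mention. This is precisely the content of \cite[Theorem 2.7]{JaTy06}, which is why the paper cites it. Second, and more seriously, the ``alternative'' penalized comparison is broken as stated: for viscosity sub- and supersolutions $u_1, u_2$, the function $u_1 - u_2 - \varepsilon\chi$ is not a viscosity subsolution of anything useful, so you cannot locate its interior maximum and read off the PDE — this is exactly the difficulty that doubling of variables is designed to overcome, and the barrier $\chi$ only controls spatial infinity, not the comparison itself. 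If you have already upgraded both functions to classical solutions, the argument reduces to the classical maximum principle with a growth barrier (that is \cite[Corollary 6.4.4]{friedman75}) and is then a recasting of, not an alternative to, the Itô verification. Strike the ``alternative'' or rephrase it as a remark about the classical case.
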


\begin{proof}
Let \( \psi \in \mathbb{G}_{cx}\) be of \(m\)-polynomial growth. Notice that \(\sqrt{a^*}\) is locally Lipschitz continuous and of linear growth thanks to the Conditions \ref{cond: LG} and \ref{cond: local Lipschitz continuity}.
For \(x \in \mathbb{R}\), denote by \(P^*_x\) the unique law of a solution process to the SDE
\[
d Y_t = \sqrt{a^*} (Y_t) d W_t, \quad Y_0 = x, 
\]
where \(W\) is a one-dimensional Brownian motion. Of course, the existence and uniqueness of \(P^*_x\) is classical (see, e.g., Chapter 5 in \cite{KaraShre}).
As the Conditions \ref{cond: LG} and \ref{cond: local Lipschitz continuity} imply Condition \ref{cond: local holder},
Theorem \ref{theo: UFSP} implies 
\[ v(t,x) = E^{P^*_x} \big[ \psi(X_{T-t}) \big], \]
for every \( (t,x) \in [0,T] \times \bR \).
Thanks to this observation, the viscosity part of the corollary follows from \cite[Theorem 1]{FK}. 
As \(v\) is continuous by the previous considerations, it further follows from \cite[Theorem~2.7]{JaTy06} that \(v\) is also a classical solution to \eqref{eq: linear PDE} and finally, the uniqueness among classical solutions of \(m\)-polynomical growth follows from \cite[Corollary 6.4.4]{friedman75}.
The proof is complete. 
\end{proof}

\subsection{Relation to Nisio semigroups and the \(\uc_b\)--Feller property} \label{sec: nisio}
Another approach to sublinear semigroups is discussed in the recent paper \cite{NR}.
There, the authors start with a family \((S^\lambda)_{\lambda \in \Lambda}\) of linear semigroups on the space \(\uc_\kappa(\bR;\bR)\) of weighted uniformly continuous functions, where the weight function \(\kappa \colon \bR \to (0, \infty)\) is assumed to be bounded and continuous.\footnote{The paper \cite{NR} allows for more general state spaces, but for the sake of comparison we focus on the one-dimensional case.}
They present general conditions for the {\em upper semigroup envelope} \((\mathscr{S}_t)_{t \in \bR_+}\) (sometimes also called {\em Nisio semigroup}) of the family \((S^\lambda)_{\lambda \in \Lambda}\) to be a sublinear semigroup on \(\uc_\kappa(\bR;\bR)\). Furthermore, they establish a viscosity theory for their framework. Using this theory, certain Nisio semigroups can be related to the sublinear semigroup \((T_t)_{t \in \bR_+}\) that is studied in this paper. 
In this section, choosing \(\kappa =1\), we relate the Nisio semigroup from \cite{NR} to nonlinear diffusions with bounded Lipschitz continuous coefficients.
The connection to the Nisio semigroup also provides conditions for the \(\uc_b\)--Feller property of the semigroup \((T_t)_{t \in \bR_+}\) that are different from those in Theorem~\ref{thm: new very main}. The \(\uc_b\)--Feller property was already observed in \cite{denk2020semigroup,K19} for nonlinear L\'evy processes. 

The space of bounded Lipschitz continuous functions from \(\bR\) into \(\bR\) is denoted by \(\lip_b (\bR; \bR)\) and the corresponding Lipschitz norm is denoted by~\(\|\cdot\|_{\lip}\).

\begin{remark}
    In case the weight function \(\kappa\) vanishes at infinity, the space \(\uc_\kappa(\bR;\bR)\) has the explicit description
    \[
    \uc_\kappa(\bR;\bR) = \big\{ u \in C(\bR; \bR) \colon u \kappa  \in C_0(\bR;\bR) \big\},
    \]
    cf. \cite[Remark~5.3 (b)]{NR}. Notice that in this case \(C_b(\bR; \bR) \subset \uc_\kappa(\bR;\bR)\) and consequently, the Nisio semigroup \((\mathscr{S}_t)_{t \in \bR_+}\) has the \(C_b\)--Feller property. 
    In particular, \cite[Section 6.3]{NR} provides examples for nonlinear Markov processes whose associated Nisio semigroups entail the \(C_b\)--Feller property that go beyond the L\'evy case that was discussed in \cite{denk2020semigroup,K19}.

    If the weight function is bounded from below, then \(\uc_\kappa(\bR;\bR)\) coincides with the space \(\uc_b(\bR;\bR)\) of bounded uniformly continuous functions.
    Extension of the semigroup to \(C_b(\bR;\bR)\) is then ensured by continuity from above on \(\lip_b(\bR;\bR)\), cf. \cite[Remark~5.3 (c)]{NR}. Under boundedness assumptions, this is verified for the class of nonlinear L\'evy processes in \cite[Example~7.2]{NR}, see also \cite[Proposition~2.8]{denk2020semigroup} and \cite[Proposition~4.10]{K21}.

    Throughout the paper \cite{NR}, the following assumption is imposed on the family of semigroups \((S^\lambda)_{\lambda \in \Lambda}\):
    \begin{align} \label{eq: A2 from NR}
    \exists\, \alpha, \beta \in \bR \colon \qquad \|S^\lambda_t (u)\|_\kappa \leq e^{\alpha t} \|u\|_\kappa, \qquad \|S^\lambda_t (u)\|_{\lip} \leq e^{\beta t} \|u\|_{\lip}
    \end{align}
    for all \(u \in \lip_b (\bR; \bR)\), \(\lambda \in \Lambda\) and \(t \in \bR_+\).
    This allows to propagate the \(\uc_\kappa\)--Feller Property of \(S^\lambda\), \(\lambda \in \Lambda\), to the Nisio semigroup \((\mathscr{S}_t)_{t \in \bR_+}\).
    Example \ref{ex: A2 fails} below shows that the second part of \eqref{eq: A2 from NR} fails for semigroups related to SDEs under mere continuity and linear growth assumptions on the drift and volatility coefficients. 
    Below, we verify \eqref{eq: A2 from NR} for a family of semigroups related to SDEs under Lipschitz and H\"older conditions. 
    
    It is worth mentioning that the paper \cite{NR} establishes a stochastic representation for Nisio semigroups that are continuous from above on \(\lip_b(\bR;\bR)\).
    To the best of our knowledge, it is not known in general that this representation coincides with \((T_t)_{t \in \bR_+}\). For the L\'evy case, i.e., sublinear Markovian convolution semigroups, this was shown in \cite[Theorem~6.4]{K21} by identifying the associated generators, and uniqueness of the corresponding evolution equation in the viscosity sense.
    Using this approach, we verify below that \((T_t)_{t \in \bR_+}\) agrees with the Nisio semigroup on \(\uc_b(\bR;\bR)\) under Lipschitz and boundedness conditions.
    In particular, this gives access to the \(\uc_b\)--Feller property of \((T_t)_{t \in \bR_+}\).
\end{remark}
 
\smallskip
In order to define the Nisio semigroup, we will impose the following condition.
\begin{condition} \label{cond: fix f cond}
For every \(f \in F\), the map \(x \mapsto b(f, x)\) is of linear growth, and the map \(x \mapsto \sqrt{a} (f, x)\) is of linear growth and locally H\"older continuous with exponent \(1 / 2\). Furthermore, there exists a constant \(\beta > 0\) such that 
\[
| b(f, x) - b (f, y) | \leq \beta\, | x- y|
\]
for all \(f \in F\) and \(x, y \in \bR\).
\end{condition}

Let \(\mathbb{B} = (\Sigma, \mathcal{A}, (\mathcal{A}_t)_{t \in \bR_+}, P)\) be a filtered probability space that supports a one-dimensional standard Brownian motion \(W\). 
In case Condition~\ref{cond: fix f cond} holds, for every \((f, x) \in F \times \bR\), there exists a continuous adapted process \(Y^{f, x}\) on the stochastic basis \(\mathbb{B}\) with dynamics
\[
d Y_t^{f, x} = b (f, Y^{f, x}_t) dt + \sqrt{a} (f, Y^{f, x}_t) d W_t, \quad Y^{f, x}_0 = x, 
\]
cf. \cite[Chapter~5]{KaraShre} or \cite[Chapter~IX]{RY}. 
In particular, martingale problem arguments (see, e.g., \cite[Theorem~5.4.20, Remark~5.4.21]{KaraShre}) show that each \(\{P \circ (Y^{f, x})^{-1} \colon x \in \bR\}\) is a strong Markov family. 
Hence, the operators
\[
S^f_t (u) (x) := E^P \big[ u (Y^{f, x}_t) \big], \quad u \in \uc_b (\bR; \bR), 
\]
satisfy the semigroup property \(S^f_{t + s} = S^f_t S^f_s\) for \(s, t \in \bR_+\). In fact, as the following lemma shows, each \((S^f_t)_{t \in \mathbb{R}_+}\) is a linear semigroup on \(\uc_b (\bR; \bR)\), whose (pointwise) generator \(A^f\) satisfies
\[
A^f (u)(x) = b(f,x) u'(x) + \tfrac{1}{2} a(f,x) u''(x), \quad u \in C^2_b(\bR; \bR).
\]
\begin{lemma} \label{lem: Sf semigroup}
Suppose that Condition~\ref{cond: fix f cond} holds. Then, for every \(f \in F\), the family \((S^f_t)_{t \in \mathbb{R}_+}\) is a linear semigroup on \(\uc_b (\bR; \bR)\) (that is also monotone and continuous from below in the sense of \cite[Definition~1.1]{NR}).
\end{lemma}

Following \cite{NR}, we define the nonlinear operator
\[
\mathcal{J}_t := \sup_{f \in F} S^f_t, \quad t \in \bR_+.
\]
Let \(\Pi_t\) be the set of finite partitions \(0 = t_0 < \dots < t_m = t\) of the interval \([0, t]\). For a partition \(\pi = \{t_0, \dots, t_m\}\), we set 
\[
\mathcal{J}_\pi := \mathcal{J}_{t_1 - t_0} \cdots \mathcal{J}_{t_m - t_{m - 1}}.
\]
Finally, we define 
\[
\mathscr{S}_t := \sup_{\pi \in \Pi_t} \mathcal{J}_{\pi}, \quad t \in \bR_+.
\]
The family \((\mathscr{S}_t)_{t \in \bR_+}\) is called the {\em Nisio semigroup} associated to \(\{ (S^f_t)_{t \in \bR_+} \colon f \in F\}\). 
The next result shows that \((\mathscr{S}_t)_{t \in \bR_+}\) deserves to be called ``semigroup''.  

\begin{proposition} \label{prop: first NR}
    Suppose that Condition~\ref{cond: fix f cond} holds.
    \begin{enumerate}
\item[\textup{(i)}]
\((\mathscr{S}_t)_{t \in \bR_+}\) is a sublinear Markovian semigroup on \(\uc_b (\bR; \bR)\) that is continuous from below, i.e., for every \(t \in \bR_+\) and any sequence \((u^n)_{n = 0}^\infty \subset \uc_b (\bR; \bR)\) with \(u^n \nearrow u^0\) pointwise it holds that \(\mathscr{S}_t (u^n) \nearrow \mathscr{S}_t (u^0)\) pointwise as \(n \to \infty\).
\end{enumerate}
Assume in addition that Condition~\ref{cond: bdd} holds.
\begin{enumerate}
\item[\textup{(ii)}]
\((\mathscr{S}_t)_{t \in \bR_+}\) is strongly continuous, i.e., \(t \mapsto \mathscr{S}_t (u)\) is continuous from \(\bR_+\) into \(\uc_b (\bR; \bR)\) for every \(u \in \uc_b (\bR; \bR)\).
\item[\textup{(iii)}] \((\mathscr{S}_t)_{t \in \bR_+}\) is continuous from above on \(\lip_b (\bR; \bR)\), i.e., for every \(t \in \bR_+\) and any sequence \((u^n)_{n = 1}^\infty \subset \lip_b (\bR; \bR)\) with \(u^n \searrow 0\) pointwise it holds that \(\mathscr{S}_t (u^n) \searrow 0\) pointwise as \(n \to \infty\).
    \end{enumerate}
\end{proposition}

Continuity from below and above of the semigroup \((\mathscr{S}_t)_{t \in \bR_+}\) allows us to extend the operators \(\mathscr{S}_t\) to \(C_b(\bR;\bR)\). We record this in the following proposition.

\begin{proposition}\label{prop: extension}
Suppose that the Conditions~\ref{cond: bdd} and \ref{cond: fix f cond} hold.
There exists a unique sublinear Markovian semigroup 
\((\widehat{\mathscr{S}}_t)_{t \in \bR_+}\) on \(C_b(\bR;\bR)\) that is continuous from above and
such that \(\widehat{\mathscr{S}}_t =\mathscr{S}_t\) on \(\uc_b(\bR; \bR)\).
\end{proposition}
    Recall that a standing assumption (when the growth function \(\kappa\) is taken to be constantly one, as we do here) in the paper \cite{NR} is the following: there are constants \(\alpha, \beta \in \bR\) such that 
    \begin{align*} % \label{eq: A2 from NR}
    \|S^f_t (u)\|_\infty \leq e^{\alpha t} \|u\|_\infty, \qquad \|S^f_t (u)\|_{\lip} \leq e^{\beta t} \|u\|_{\lip}
    \end{align*}
    for all \(u \in \lip_b (\bR; \bR)\), \(f \in F\) and \(t \in \bR_+\). In the proof of Proposition~\ref{prop: first NR}, we show that this assumption holds under Condition~\ref{cond: fix f cond}. The following example shows that the second estimate fails under mere continuity and linear growth conditions on the coefficients.

\begin{example} \label{ex: A2 fails}
Let \(B\) be a one-dimensional standard Brownian motion starting in zero. 
The semigroup \[
S_t (u) (x) := E \big[ u (Y^x_t) \big], \quad Y^x := (B + x^{1/3})^3, \] 
does not satisfy \(\|S_t (u)\|_{\lip} \leq e^{\beta t}\|u\|_{\lip}\) for all \(u \in \lip_b(\bR; \bR)\) and \(t \in \bR_+\). Indeed, this follows\footnote{The argument is indirect: If \(\|S_1 (u)\|_{\lip} \leq e^{\beta} \|u\|_{\lip}\) holds for all \(u \in \lip_b(\bR; \bR)\), then the same inequality must also hold for \(u = \on{id}\).} from the observation that
\[
S_1 ( \on{id})(x) = E \big[ (B_1 + x^{1/3})^3 \big] = x + 3 x^{1/3}.
\]
Furthermore, It\^o's formula yields that 
\[
d Y^x_t = 3 (Y^x_t)^{1/3} dt + 3 (Y^x_t)^{2/3} d B_t, \quad Y^x_0 = x, 
\]
which shows that the generator of \((S_t)_{t \in \bR_+}\) satisfies 
\[
A (u) (x) = 3 x^{1/3} u' (x) + \tfrac{9}{2} x^{4/3} u'' (x), \quad u \in C^2_b (\bR; \bR).
\]
It is interesting to note that the class \(C^2_b (\bR; \bR)\) does not suffice to characterize the generator uniquely (cf. \cite[Exercise~5.2.17]{KaraShre}). 
\end{example}

By Theorem~\ref{thm: new viscosity no unique} and results from \cite[Section~4]{NR}, both the Nisio semigroup \((\mathscr{S}_t)_{t \in \bR_+}\) and our semigroup \((T_t)_{t \in \bR_+}\) can be related to the same \emph{generator equation}
\begin{align} \label{eq: real generator eq}
 \begin{cases} \partial_t u (t, x)  = \sup_{f \in F} A^f(u) (t, x) = G (t, x, u), & \text{for } (t, x) \in [0, T) \times \bR,\\
 u (T, x) = \psi (x), & \text{for } x \in \mathbb{R}.
 \end{cases}
\end{align}
Under an additional global Lipschitz condition that ensures uniqueness for \eqref{eq: real generator eq}, see Theorem~\ref{theo: viscosity with ell} above, we can prove that \((\mathscr{S}_t)_{t \in \bR_+} = (T_t)_{t \in \bR_+}\).

\begin{theorem} \label{thm: nisio}
    Suppose that the Conditions~\ref{cond: bdd}, \ref{cond: continuity} and \ref{cond: Lipschitz continuity} hold. Then, \((\mathscr{S}_t)_{t \in \bR_+} = (T_t)_{t \in \bR_+}\) on \(\uc_b (\bR; \bR)\). In particular, \((T_t)_{t \in \bR_+}\) has the \(\uc_b\)--Feller property.
\end{theorem}

The semigroup \((T_t)_{t \in \bR_+}\) is continuous from above on \(C_b(\bR; \bR)\) by \cite[Proposition~3.5]{denk2018} and Proposition~\ref{prop: compactness} below.
Hence, combining Theorem \ref{thm: nisio} with Theorem~\ref{theo: viscosity with ell} and Proposition~\ref{prop: extension}, we obtain the following corollary.
\begin{corollary}
     Suppose that the Conditions~\ref{cond: bdd}, \ref{cond: continuity} and \ref{cond: Lipschitz continuity} hold.
     Then, \((T_t)_{t \in \bR_+}\) is the unique extension of \((\mathscr{S}_t)_{t \in \bR_+}\) to \(C_b(\bR;\bR)\) that is continuous from above.
\end{corollary}

When arguing based on the generator equation, it might be difficult to relate our framework to the one from \cite{NR} without suitable regularity conditions on the coefficients \(b\) and \(a\).
In the examples from \cite[Section~6.3]{NR}, \(b\) and \(a\) satisfy global Lipschitz conditions (comparable to Condition~\ref{cond: Lipschitz continuity}).

%-------------------------------
%-------------------------------

\section{The Regularity of \( \cR \)} \label{sec: regularity r}
In this section we prove that \(\cR\) is upper hemicontinuous with compact values. 
\subsection{Some Preparations}
We start with a few properties of the correspondence \(\Theta\).
\begin{lemma} \label{lem: continuity theta}
Suppose that Condition \ref{cond: continuity} holds.
Then, the correspondence \(x \mapsto \Theta (x)\) is compact-valued and continuous.
\end{lemma}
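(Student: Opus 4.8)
The plan is to exhibit $\Theta$ as the slicewise image of a single jointly continuous map, after which both compactness of its values and continuity of the correspondence fall out of soft arguments. Set
\[
\Psi\colon \mathbb{R}_+\times\Omega\times F \longrightarrow \mathbb{R}\times\mathbb{R}_+,\qquad \Psi(t,\omega,f):=\big(b(f,\omega(t)),\,a(f,\omega(t))\big),
\]
so that $\Theta(t,\omega)=\Psi\big(\{(t,\omega)\}\times F\big)$ for all $(t,\omega)$. First I would check that the evaluation map $\mathrm{ev}(t,\omega):=\omega(t)$ is jointly continuous on $\mathbb{R}_+\times\Omega$: if $(t_n,\omega_n)\to(t,\omega)$, fix $T$ with $t_n,t\le T$ for all $n$; then $\sup_{s\le T}|\omega_n(s)-\omega(s)|\to 0$ by definition of the local uniform topology, whence $|\omega_n(t_n)-\omega(t)|\le\sup_{s\le T}|\omega_n(s)-\omega(s)|+|\omega(t_n)-\omega(t)|\to0$ because $\omega$ is continuous. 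Together with Condition \ref{cond: continuity} this shows that $\Psi$ is continuous.

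Compactness of $\Theta(t,\omega)$ is then immediate, being the continuous image of the compact space $F$ (Standing Assumption \ref{SA: compact}). For continuity of the correspondence, i.e., upper and lower hemicontinuity, I would argue sequentially, which is legitimate since $\mathbb{R}_+\times\Omega$, $F$ and $\mathbb{R}\times\mathbb{R}_+$ are metrizable. For lower hemicontinuity: given $(t_n,\omega_n)\to(t,\omega)$ and $y=\Psi(t,\omega,f)\in\Theta(t,\omega)$, the points $y_n:=\Psi(t_n,\omega_n,f)\in\Theta(t_n,\omega_n)$ satisfy $y_n\to y$ by continuity of $\Psi$, which is precisely the sequential criterion of \cite[Theorem~17.19]{hitchi}. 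For upper hemicontinuity of a compact-valued correspondence it suffices (cf. \cite{hitchi}) to show: whenever $(t_n,\omega_n)\to(t,\omega)$ and $y_n\in\Theta(t_n,\omega_n)$, some subsequence of $(y_n)$ converges to a point of $\Theta(t,\omega)$. Writing $y_n=\Psi(t_n,\omega_n,f_n)$ with $f_n\in F$ and using compactness of $F$, pass to a subsequence $f_{n_k}\to f\in F$; then $y_{n_k}=\Psi(t_{n_k},\omega_{n_k},f_{n_k})\to\Psi(t,\omega,f)\in\Theta(t,\omega)$ by continuity of $\Psi$, as required.

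The whole argument is soft, so there is no genuine obstacle; the only step deserving a moment's care is the joint continuity of $\mathrm{ev}$ with respect to the local uniform topology on $\Omega$. Alternatively, one may phrase the result via composition: $x\mapsto\{(b(f,x),a(f,x))\colon f\in F\}$ is a continuous and compact-valued correspondence because $F$ is compact and $(f,x)\mapsto(b(f,x),a(f,x))$ is continuous, and $\Theta$ is the composition of this correspondence with the continuous map $\mathrm{ev}$; both operations preserve compact-valuedness and continuity (see \cite{hitchi}).
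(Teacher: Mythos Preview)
Your proof is correct and follows essentially the same idea as the paper: exhibit $\Theta(t,\omega)$ as the image of the compact set $F$ under the continuous map $(f,t,\omega)\mapsto(b(f,\omega(t)),a(f,\omega(t)))$, then read off compactness and continuity. The paper packages this into an abstract lemma (if $g\colon F\times E\to D$ is continuous and $F$ compact, then $x\mapsto g(F,x)$ is compact-valued and continuous) and proves it via the composition theorem for correspondences \cite[Theorems 17.23, 17.28]{hitchi}; your alternative paragraph at the end is exactly this route, while your main argument unwinds it into direct sequential verifications of upper and lower hemicontinuity. Either way the content is the same.
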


The previous lemma is a direct consequence of the following general observation.

\begin{lemma} \label{lem: continuity abstract}
Let \(F, E \) and \( D \) be topological spaces. If
\( g \colon F \times E \to D \) is continuous, and \(F \) is compact, then the correspondence \( \varphi \colon E \twoheadrightarrow D \) defined by \( \varphi(x) := g(F,x) \) is compact-valued and continuous.
\end{lemma}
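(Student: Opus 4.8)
The plan is to prove this directly from the definition of a continuous correspondence, which means verifying both upper and lower hemicontinuity separately, and exploiting the fact that images of $\varphi$ are continuous images of the compact space $F$.

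First I would note that compactness of the values is immediate: $\varphi(x) = g(\{(f,x) : f \in F\})$ is the image of the compact set $F$ under the continuous map $f \mapsto g(f,x)$, hence compact in $D$. (If $D$ is not assumed Hausdorff one should be a little careful about what ``compact'' means, but no separation axiom is needed for the image of a compact set under a continuous map to be compact.) Next, for upper hemicontinuity, I would argue via the closed-graph/sequential characterization or, cleanly, via nets: let $V \subseteq D$ be open with $\varphi(x) \subseteq V$. For each $f \in F$ we have $g(f,x) \in V$, and by continuity of $g$ there are open neighborhoods $U_f \ni f$ in $F$ and $W_f \ni x$ in $E$ with $g(U_f \times W_f) \subseteq V$. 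By compactness of $F$, finitely many $U_{f_1}, \dots, U_{f_n}$ cover $F$; set $W := \bigcap_{i=1}^n W_{f_i}$, an open neighborhood of $x$. Then for any $x' \in W$ and any $f \in F$, picking $i$ with $f \in U_{f_i}$ gives $g(f,x') \in g(U_{f_i} \times W_{f_i}) \subseteq V$, so $\varphi(x') \subseteq V$. This is the standard ``tube lemma'' argument and is the technical heart of the upper hemicontinuity part.

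For lower hemicontinuity, let $V \subseteq D$ be open with $\varphi(x) \cap V \neq \emptyset$; then there is some $f_0 \in F$ with $g(f_0, x) \in V$. By continuity of $g$ at $(f_0,x)$, there exist open neighborhoods $U \ni f_0$ and $W \ni x$ with $g(U \times W) \subseteq V$; in particular for every $x' \in W$ we have $g(f_0, x') \in V$, so $\varphi(x') \cap V \neq \emptyset$. This direction does not even use compactness of $F$. Combining the two hemicontinuity statements with the compactness of values gives that $\varphi$ is a compact-valued continuous correspondence, which is exactly the assertion.

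I do not expect any serious obstacle here — the proof is essentially the tube lemma plus the elementary fact about continuous images of compact sets. The only point requiring a little care is to state upper hemicontinuity in the correct form for possibly non-metrizable $E$ and $D$ (using neighborhoods/nets rather than sequences), so that Lemma~\ref{lem: continuity theta} follows by applying the abstract result with $g$ the map $(f,(t,\omega)) \mapsto (b(f,\omega(t)), a(f,\omega(t)))$, which is continuous under Condition~\ref{cond: continuity} since evaluation $(t,\omega) \mapsto \omega(t)$ is continuous for the local uniform topology on $\Omega$.
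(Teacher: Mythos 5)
Your proof is correct, but you take a genuinely different route from the paper. The paper's proof is compositional: it writes $\varphi$ as the composition of the correspondence $E \ni x \mapsto F \times \{x\}$ (continuous and compact-valued as a finite product, citing \cite[Theorem 17.28]{hitchi}) with the single-valued continuous correspondence $(f,x) \mapsto \{g(f,x)\}$, and then invokes the fact that composition preserves continuity of correspondences (\cite[Theorem 17.23]{hitchi}). You instead verify both hemicontinuities directly from the neighborhood-based definitions: upper hemicontinuity via the tube lemma (covering $F$ by finitely many $U_{f_i}$ and intersecting the corresponding $W_{f_i}$), and lower hemicontinuity via continuity of $g$ at a single point $(f_0,x)$, which you correctly observe needs no compactness. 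Both arguments are sound; the paper's is shorter and offloads the work onto general machinery for correspondences, whereas yours is self-contained and elementary, and makes explicit exactly where compactness of $F$ enters (only in the upper hemicontinuity part). Your note on checking the continuity of the specific $g$ in Lemma~\ref{lem: continuity theta} (using continuity of the evaluation map on $\Omega$) is also a correct and useful point.
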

\begin{proof}
By construction, \( \varphi \) has compact values. Regarding the continuity, note that \( \varphi \) is the composition of the correspondence
\( E \ni x \mapsto F \times \{ x \} \)
 and the (single-valued) correspondence 
\( F \times E \ni (f,x) \mapsto \{ g(f,x) \} \). 
While the latter correspondence is continuous due to continuity of \( g \), the former is continuous being the finite product of 
compact-valued continuous correspondences, cf. \cite[Theorem 17.28]{hitchi}.
Thus, continuity of \( \varphi \) follows from \cite[Theorem 17.23]{hitchi}.
\end{proof}

The next lemma is an auxiliary result regarding a large class of continuous correspondences.

\begin{lemma} \label{lem: continuity interval}
Let \( E \) be a topological space, and let \(f, g \colon E \to \mathbb{R} \)
be continuous functions with \( f(x) \leq g(x) \) for every \( x \in E\).
Then, the correspondence \( E \ni x \mapsto [f(x), g(x)] \) is continuous with compact values.
\end{lemma}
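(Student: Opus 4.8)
The plan is to verify the two standard criteria for continuity of a correspondence separately, namely upper hemicontinuity and lower hemicontinuity, using the elementary structure of closed intervals on the real line and the continuity of the endpoint functions $f$ and $g$. Compactness of the values is immediate since each value $[f(x), g(x)]$ is a closed and bounded subset of $\mathbb{R}$.

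\begin{proof}
The values $[f(x), g(x)]$ are closed bounded intervals, hence compact. It remains to check continuity.

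We first verify upper hemicontinuity. Let $x \in E$ and let $U \subset \mathbb{R}$ be an open set with $[f(x), g(x)] \subset U$. Since $[f(x), g(x)]$ is compact and contained in the open set $U$, there is $\varepsilon > 0$ such that the $\varepsilon$-enlargement $(f(x) - \varepsilon, g(x) + \varepsilon) \subset U$. By continuity of $f$ and $g$, there is a neighborhood $V$ of $x$ such that $|f(y) - f(x)| < \varepsilon$ and $|g(y) - g(x)| < \varepsilon$ for all $y \in V$. Then, for $y \in V$, we have $[f(y), g(y)] \subset (f(x) - \varepsilon, g(x) + \varepsilon) \subset U$. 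This shows upper hemicontinuity.

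Next we verify lower hemicontinuity. Let $x \in E$ and let $U \subset \mathbb{R}$ be open with $[f(x), g(x)] \cap U \neq \emptyset$. Pick $z \in [f(x), g(x)] \cap U$ and $\delta > 0$ with $(z - \delta, z + \delta) \subset U$. We claim there is a point $z_x \in [f(x), g(x)]$ with $z_x \in (z - \delta/2, z + \delta/2)$ that additionally satisfies $f(x) \leq z_x - \delta/2 \cdot \mathbbm{1}$ — more carefully: set $z_x := \min\{\max\{z, f(x) + \delta/4\}, g(x) - \delta/4\}$ if $g(x) - f(x) > \delta/2$, and $z_x := (f(x) + g(x))/2$ otherwise; in either case $z_x \in [f(x), g(x)]$ and $|z_x - z| < \delta/2$ (using $z \in [f(x), g(x)]$), so $z_x \in U$. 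Moreover $z_x$ has distance at least $\eta := \min\{\delta/4, (g(x) - f(x))/2\} \geq 0$ from both endpoints; if $g(x) = f(x)$ this degenerate argument must be handled directly, so suppose first $g(x) > f(x)$, giving $\eta > 0$. By continuity of $f$ and $g$, there is a neighborhood $V$ of $x$ such that for $y \in V$ we have $|f(y) - f(x)| < \eta$, $|g(y) - g(x)| < \eta$, and $|z_x - z| + |f(y) - f(x)| + |g(y) - g(x)|$ keeps $z_x$ (or a suitable nearby point) inside $[f(y), g(y)] \cap U$: concretely, $f(y) < f(x) + \eta \leq z_x$ and $g(y) > g(x) - \eta \geq z_x$, so $z_x \in [f(y), g(y)]$, and $z_x \in U$ as shown. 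Hence $[f(y), g(y)] \cap U \neq \emptyset$ for all $y \in V$. In the degenerate case $f(x) = g(x) = z$, the single point $z$ lies in $U$; choose $V$ with $|f(y) - z| < \delta/2$ and $|g(y) - z| < \delta/2$ for $y \in V$, and note that the interval $[f(y), g(y)]$ contains a point within $\delta/2$ of $z$ (namely either endpoint), which lies in $U$. This completes the proof of lower hemicontinuity, and therefore of continuity.
\end{proof}

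The main point of care is the lower hemicontinuity argument: one must select a witness point $z_x$ in $[f(x),g(x)] \cap U$ that is robust under small perturbations of the endpoints, which requires separating it away from the boundary of the interval (when the interval is non-degenerate) or treating the degenerate single-point case by hand. Both cases, however, are entirely elementary once the right choice of witness is made.
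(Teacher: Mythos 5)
Your proof is correct, but it takes a genuinely different route from the paper. The paper's proof is a two-liner: it invokes the fact (from Aliprantis--Border, Theorem 17.15) that a compact-valued correspondence into $\mathbb{R}$ is continuous if and only if the induced map into the hyperspace of nonempty compact sets is continuous for the Hausdorff metric, and then observes the explicit formula $d_H([a,b],[c,d]) = \max\{|a-c|,|b-d|\}$, which immediately reduces everything to continuity of $f$ and $g$. You instead verify upper and lower hemicontinuity directly from the definitions. Your upper-hemicontinuity step is clean; your lower-hemicontinuity step is correct but considerably more labored than it needs to be, with nested $\min/\max$ choices of a witness and a separate treatment of the degenerate case $f(x) = g(x)$. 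A much shorter direct argument exists: given $z \in [f(x),g(x)] \cap U$, set $h(y) := \min\{g(y), \max\{f(y), z\}\}$; then $h$ is continuous, $h(y) \in [f(y), g(y)]$ for all $y$ (since $f \le g$), and $h(x) = z \in U$, so $h(y) \in U$ for $y$ in a neighborhood of $x$, with no case split at all. The trade-off between the two approaches: the paper's argument is shorter and leans on a characterization theorem, while a direct verification like yours is self-contained and elementary; if you keep the direct route, I'd recommend replacing your witness construction with the clipping function $h$ above.
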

\begin{proof}
By \cite[Theorem 17.15]{hitchi}, continuity of the correspondence  \( E \ni x \mapsto [f(x), g(x)] \) is equivalent to continuity of the function
\( E \ni x \mapsto [f(x), g(x)] \in \mathcal{K}(\bR)\),
where \( \mathcal{K}(\bR) \), the collection of nonempty compact subsets of \( \bR\), is equipped with the Hausdorff metric \( d_H\).
As 
\[ d_H([a,b], [c,d]) = \max \big\{ | a - c|, |b-d| \big\},
\]
for every \(a \leq b\), \(c \leq d \),
continuity of \( E \ni x \mapsto [f(x), g(x)] \in \mathcal{K}(\bR)\) follows from continuity of \(f \) and~\(g \).
\end{proof}

For a subset \(G\) of a locally convex space we denote by \(\oconv G\) the closure of the convex hull generated by \(G\).
\begin{lemma} \label{lem: continuity result for theta}
Let \(D\) be a locally convex space and let \(\varphi \colon \mathbb{R}_+ \twoheadrightarrow D\) be an upper hemicontinuous correspondence with convex and compact values such that \(\oconv \varphi([t, t + 1])\) is compact.  Then, for every \(t \in \mathbb{R}_+\), we have 
\[
(a_n)_{n \in \mathbb{N}} \subset (0, 1], \ a_n \to 0\ \Longrightarrow \ \bigcap_{m \in \mathbb{N}} \oconv  \varphi ([t, t + a_m]) \subset \varphi (t).
\]
\end{lemma}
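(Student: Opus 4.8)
The plan is to show that any point $d$ lying in all the sets $\oconv\varphi([t,t+a_m])$ must already lie in $\varphi(t)$, exploiting upper hemicontinuity together with the convexity of $\varphi(t)$ via a separation argument. First I would fix $t\in\mathbb{R}_+$ and a sequence $(a_n)_{n\in\mathbb{N}}\subset(0,1]$ with $a_n\to 0$, and pick $d\in\bigcap_{m}\oconv\varphi([t,t+a_m])$. Aiming for a contradiction, suppose $d\notin\varphi(t)$. Since $\varphi(t)$ is convex and compact (hence closed) and $D$ is a locally convex space, the Hahn–Banach separation theorem gives a continuous linear functional $\Lambda\in D^*$ and a real number $c$ with $\Lambda(d) > c > \Lambda(y)$ for all $y\in\varphi(t)$; in particular $\sup_{y\in\varphi(t)}\Lambda(y) < \Lambda(d)$.

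The next step is to propagate this strict inequality to a neighbourhood of $t$ using upper hemicontinuity. The set $U:=\{d'\in D:\Lambda(d') < \Lambda(d)\}$ is open and contains $\varphi(t)$. By upper hemicontinuity of $\varphi$ at $t$ (and compactness of $\varphi(t)$), there is $\delta>0$ such that $\varphi(s)\subset U$ for all $s\in[t,t+\delta]$; concretely, $\Lambda(y) < \Lambda(d)$ for every $y\in\varphi([t,t+\delta])$. Because $\varphi([t,t+1])$ has compact convex closed hull, $\Lambda$ attains a maximum on $\oconv\varphi([t,t+1])$, so in particular $\beta:=\sup_{y\in\varphi([t,t+\delta])}\Lambda(y)$ is finite; I would like to conclude $\beta<\Lambda(d)$. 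This needs a small compactness argument: the continuous image $\Lambda(\varphi([t,t+\delta]))\subset(-\infty,\Lambda(d))$ is a subset of $\mathbb{R}$ whose supremum over the (not-a-priori-compact) set $\varphi([t,t+\delta])$ might equal $\Lambda(d)$ unless we argue carefully. Instead, I would choose $\delta$ so that it suffices to control values near $t$: shrink further, or use that for each $s$, $\varphi(s)\subset\{\Lambda<\Lambda(d)\}$ and combine with the uniform bound from compactness of $\oconv\varphi([t,t+1])$ — more cleanly, replace $U$ by $\{d':\Lambda(d')<\tfrac{1}{2}(c+\Lambda(d))\}$ in the separation step so that $\beta\le\tfrac12(c+\Lambda(d))<\Lambda(d)$ directly, since $\Lambda(y)<c'$ for all $y\in\varphi([t,t+\delta])$ where $c':=\tfrac12(c+\Lambda(d))$.

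Finally, since a linear functional is constant-sign-preserving under convex combinations and closure, $\Lambda(d')\le c'$ for every $d'\in\oconv\varphi([t,t+\delta])$: indeed $\Lambda\le c'$ on $\varphi([t,t+\delta])$, hence on its convex hull by linearity, hence on the closure by continuity of $\Lambda$. Now choose $m$ large enough that $a_m\le\delta$; then $d\in\oconv\varphi([t,t+a_m])\subset\oconv\varphi([t,t+\delta])$, whence $\Lambda(d)\le c'<\Lambda(d)$, a contradiction. Therefore $d\in\varphi(t)$, which proves the claimed inclusion. The main obstacle is the bookkeeping in the separation step: one must separate $d$ from $\varphi(t)$ with enough strictness (a strict gap $c'<\Lambda(d)$) that it survives passage to the convex hull and closure over a small interval, and this is exactly why the compactness of $\oconv\varphi([t,t+1])$ is invoked — it guarantees $\Lambda$ is bounded on all the relevant hulls so no supremum escapes to $+\infty$.
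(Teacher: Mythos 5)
Your proof is correct, and it takes a genuinely different route from the paper. The paper's proof stays entirely within the abstract theory of correspondences: it observes that $s\mapsto\psi(s):=\varphi([t,t+s])$ is upper hemicontinuous as a composition, invokes a result that $s\mapsto\oconv\psi(s)$ is again an upper hemicontinuous compact-valued correspondence, and then uses the closed-graph characterization of upper hemicontinuity for compact-valued correspondences to pull the limit point into $\phi(0)=\varphi(t)$. Your argument instead uses Hahn--Banach strict separation (point versus disjoint compact convex set in a locally convex Hausdorff space) and exploits upper hemicontinuity only in its bare form via the open set $U=\{\Lambda<c'\}$; the key step that a linear continuous functional bounded by $c'$ on a set remains bounded by $c'$ on the closed convex hull is exactly what replaces the machinery. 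Your approach is more elementary and self-contained, and it notably does not use the hypothesis that $\oconv\varphi([t,t+1])$ is compact at all --- that hypothesis is what the paper's route needs in order to invoke the cited theorems about compact-valued correspondences. The brief detour in the middle of your write-up (worrying that $\sup_{y\in\varphi([t,t+\delta])}\Lambda(y)$ might equal $\Lambda(d)$ and then abandoning that concern) is not a gap, since you resolve it cleanly by separating with a strict margin $c'=\tfrac{1}{2}(c+\Lambda(d))$ from the outset, but the final version would read better if the aborted line of thought were deleted.
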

\begin{proof}
Notice that \( [0,1] \ni s \mapsto \psi (s) := \varphi ([t, t + s])\) is upper hemicontinuous as a composition of the continuous (Lemma \ref{lem: continuity interval}) correspondence \(s \mapsto [t, t + s] \) and the upper hemicontinuous correspondence~\(\varphi\), see \cite[Theorem 17.23]{hitchi}. By our assumption, \(\oconv \psi (s)\) is compact, being a closed subset of the compact set \(\oconv \varphi([t, t + 1])\), and we deduce from \cite[Theorem~17.35]{hitchi} that \(s \mapsto \phi (s) := \oconv \psi(s)\) is upper hemicontinuous. Take \(x \in \bigcap_{m \in \mathbb{N}} \phi (a_m)\). Then, for each \(m \in \mathbb{N}\), \((a_m, x) \in \on{gr} \phi\) and hence, by \cite[Theorem~17.16]{hitchi}, as \(\phi\) is compact-valued, the upper hemicontinuity and \(a_m \to 0\) imply that \(x \in \phi(0)\). Finally, observing that \(\phi (0) = \varphi(t)\) completes the proof.
\end{proof}

\begin{lemma} \label{lem: oconv inclusion theta}
Suppose that the Conditions \ref{cond: convexity} and \ref{cond: continuity} hold. Then, 
\[
\bigcap_{m \in \mathbb{N}} \oconv \Theta (\omega([t, t + 1/m])) \subset \Theta (\omega(t))
\]
for all \((t, \omega) \in \of 0, \infty\of\).
\end{lemma}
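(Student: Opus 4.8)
The strategy is to apply Lemma \ref{lem: continuity result for theta} to the correspondence $\varphi(s) := \Theta(s, \omega)$ on a neighborhood of a fixed $t$. Since $\Theta$ has Markovian structure, $\Theta(s,\omega)$ depends only on $\omega(s)$, so $\varphi(s) = \{(b(f,\omega(s)), a(f,\omega(s))) : f \in F\}$. First I would record that $\varphi$ is continuous with compact values on all of $\mathbb{R}_+$: by Lemma \ref{lem: continuity abstract} applied to $g(f,s) := (b(f,\omega(s)), a(f,\omega(s)))$, which is continuous as a composition of the continuous map $s \mapsto \omega(s)$ with the continuous coefficients (Condition \ref{cond: continuity}), using compactness of $F$ (Standing Assumption \ref{SA: compact}). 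In particular $\varphi$ is upper hemicontinuous with compact — hence convex and compact, by Condition \ref{cond: convexity} — values. Note the Markovian structure is what lets us invoke the abstract lemma directly in the time variable.

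The remaining hypothesis of Lemma \ref{lem: continuity result for theta} to verify is that $\oconv \varphi([t, t+1])$ is compact in the locally convex space $D = \mathbb{R} \times \mathbb{R}_+$. For this, observe that $\varphi([t,t+1]) = \{(b(f,y), a(f,y)) : f \in F, \ y \in \omega([t,t+1])\}$. Since $\omega$ is continuous, $\omega([t,t+1])$ is a compact subset of $\mathbb{R}$, so $F \times \omega([t,t+1])$ is compact, and its continuous image $\varphi([t,t+1])$ is a compact subset of $\mathbb{R} \times \mathbb{R}_+$. In a finite-dimensional space the closed convex hull of a compact set is compact (Carathéodory), so $\oconv \varphi([t,t+1])$ is compact. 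This is the only genuinely extra step beyond the cited lemmas, and it is routine given continuity of $\omega$.

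With all hypotheses in place, Lemma \ref{lem: continuity result for theta} applied to the sequence $a_m := 1/m$ yields
\[
\bigcap_{m \in \mathbb{N}} \oconv \varphi([t, t + 1/m]) \subset \varphi(t),
\]
which is exactly
\[
\bigcap_{m \in \mathbb{N}} \oconv \Theta([t, t + 1/m], \omega) \subset \Theta(t, \omega),
\]
and since $(t,\omega) \in \of 0, \infty \of$ was arbitrary, the proof is complete.

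The main (minor) obstacle is bookkeeping: making sure the abstract continuity lemma is invoked with the time variable $s$ (not the path), which is legitimate precisely because $\Theta$ is Markovian, and confirming compactness of the closed convex hull in the ambient finite-dimensional space rather than appealing to a general locally-convex statement. Neither is deep, so I would keep the write-up short.
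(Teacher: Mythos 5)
Your proof is correct and follows essentially the same route as the paper: fix \(\omega\), reduce to the one-variable correspondence \(s \mapsto \Theta(s,\omega)\), verify it is continuous with compact convex values and that \(\oconv\Theta([t,t+1],\omega)\) is compact, then invoke Lemma \ref{lem: continuity result for theta}. The only cosmetic differences are that you re-derive continuity directly from Lemma \ref{lem: continuity abstract} instead of citing Lemma \ref{lem: continuity theta}, and you justify compactness of the closed convex hull via Carath\'eodory in \(\mathbb{R}^2\) rather than the general locally convex statement the paper cites; both are equivalent here.
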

\begin{proof}
By Lemma \ref{lem: continuity theta} and Condition \ref{cond: convexity}, \(t \mapsto \Theta (\omega (t))\) is continuous with compact and convex values. Furthermore, by the continuity of \(b\) and \(a\), i.e., Condition \ref{cond: continuity}, the set \(\Theta (\omega([t, t + 1]))\) is compact. Consequently, as in completely metrizable locally convex spaces the closed convex hull of a compact set is itself compact (\cite[Theorem 5.35]{hitchi}), we conclude that \(\oconv \Theta ( \omega([t, t + 1]))\) is compact. Finally, the claim follows from Lemma~\ref{lem: continuity result for theta}.
\end{proof}

\subsection{\(\cR\) is compact-valued}
We start with a first auxiliary observation. For \(M > 0\) and \(\omega \in \Omega\), define 
\[
\tau_M (\omega) := \inf \{t \geq 0 \colon |\omega (t)| \geq M\} \wedge M.
\]
Furthermore, for \(\omega = (\omega^{(1)}, \omega^{(2)}) \in \Omega \times \Omega\), we set 
\[
\zeta_M (\omega) := \sup \Big\{ \frac{|\omega^{(2)} (t \wedge \tau_M (\omega^{(1)})) - \omega^{(2)}(s \wedge \tau_M(\omega^{(1)}))|}{t - s} \colon 0 \leq s < t\Big\}.
\]
\begin{lemma} \label{lem: Lipschitz Contant}
Let \(P\) be a Borel probability measure on \(\Omega \times \Omega\).
There exists a set \(D \subset \mathbb{R}_+\) with countable complement such that for every \(M \in D\) 
there exists a \(P\)-null set \(N = N(M)\) such that \(\zeta_M\) is lower semicontinuous at all \(\omega \not \in N\).
\end{lemma}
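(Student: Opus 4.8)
The plan is to recognise $\zeta_M(\omega)$ as (essentially) the Lipschitz seminorm of the second coordinate $\omega^{(2)}$ restricted to the random interval $[0,\tau_M(\omega^{(1)})]$, and to show that this quantity is lower semicontinuous at \emph{every} $\omega\in\Omega\times\Omega$, for \emph{every} $M>0$; this of course gives the asserted statement (in fact with $D=\mathbb{R}_+$ and $N(M)=\varnothing$). Two facts drive the argument: (i) the hitting-time functional $\omega\mapsto\tau_M(\omega^{(1)})$ is lower semicontinuous on $\Omega\times\Omega$, and (ii) the Lipschitz constant of a continuous function on an interval is lower semicontinuous under pointwise convergence of the function and can only increase when the interval is enlarged.

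\emph{Step 1 (the hitting time is lower semicontinuous).} I would write $\sigma_M(\omega^{(1)}):=\inf\{t\ge0:|\omega^{(1)}(t)|\ge M\}$, so $\tau_M=\sigma_M\wedge M$. For each $t_0\ge0$ the map $\omega\mapsto\max_{s\in[0,t_0]}|\omega^{(1)}(s)|$ is continuous on $\Omega\times\Omega$ (local uniform topology), so the sublevel set $\{\sigma_M(\cdot^{(1)})\le t_0\}=\{\omega:\max_{s\in[0,t_0]}|\omega^{(1)}(s)|\ge M\}$ is closed; hence $\sigma_M(\cdot^{(1)})$, and therefore $\tau_M(\cdot^{(1)})$, is lower semicontinuous. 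Equivalently, whenever $\omega^n\to\omega$ in $\Omega\times\Omega$ one has $\liminf_n\tau_M(\omega^{n,(1)})\ge\tau_M(\omega^{(1)})$.

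\emph{Steps 2--3 (reduction and conclusion).} Fix $\omega$ and set $\tau:=\tau_M(\omega^{(1)})$. A short case analysis in the supremum defining $\zeta_M(\omega)$ — terms with $\tau\le s$ vanish, and a term with $s<\tau<t$ is dominated by the one attached to the pair $(s,\tau)$ since $t-s>\tau-s$ — shows that for $\tau>0$,
\[
\zeta_M(\omega)=\sup\Big\{\tfrac{|\omega^{(2)}(t)-\omega^{(2)}(s)|}{t-s}:0\le s<t\le\tau\Big\}=\sup\Big\{\tfrac{|\omega^{(2)}(t)-\omega^{(2)}(s)|}{t-s}:0\le s<t<\tau\Big\},
\]
the last equality by continuity of $\omega^{(2)}$, while for $\tau=0$ every term in the defining supremum is $0$, so $\zeta_M(\omega)=0$. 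Now let $\omega^n\to\omega$; the case $\tau=0$ is trivial since $\zeta_M\ge0$, so assume $\tau>0$ and fix any $0\le s<t<\tau$. By Step 1, $\tau_M(\omega^{n,(1)})>t$ for all large $n$, so the pair $(s,t)$ is admissible in the supremum for $\zeta_M(\omega^n)$ and yields $\zeta_M(\omega^n)\ge\tfrac{|\omega^{n,(2)}(t)-\omega^{n,(2)}(s)|}{t-s}$; since $\omega^{n,(2)}\to\omega^{(2)}$ pointwise, passing to $\liminf_n$ and then taking the supremum over all such $(s,t)$ gives $\liminf_n\zeta_M(\omega^n)\ge\zeta_M(\omega)$, i.e.\ $\zeta_M$ is lower semicontinuous at $\omega$.

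The main obstacle is Step 1, and more precisely the fact that the hitting time of the \emph{closed} set $\{|x|\ge M\}$ is only lower semicontinuous, not continuous (a path touching level $M$ without crossing it can be perturbed to miss $M$ entirely). This is precisely why upper semicontinuity — hence continuity — of $\zeta_M$ fails in general and would force a restriction to a co-countable set of levels $M$ (those where the hitting time of $\{|x|\ge M\}$ agrees $P$-a.e.\ with that of the open set $\{|x|>M\}$, obtained by noting that $M\mapsto E^P[\arctan\tau_M]$ is increasing and left-continuous, hence has countably many jumps) together with the removal of a $P$-null set of $\omega$. For the lower bound asserted in the lemma, however, this failure is harmless: in the limit the stopping interval only grows, and a longer interval carries a larger Lipschitz constant.
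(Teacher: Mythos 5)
Your proof is correct, and it takes a genuinely different and more elementary route than the paper. The paper invokes \cite[Lemma 11.1.2]{SV} to get that, for all but countably many levels $M$, the map $\omega \mapsto \tau_M(\omega^{(1)})$ is $P$-a.s.\ \emph{continuous}; then it passes a $\liminf$ inside the supremum defining $\zeta_M$, which requires knowing that each term $\tfrac{|\omega_n^{(2)}(t\wedge\tau_M(\omega_n^{(1)}))-\omega_n^{(2)}(s\wedge\tau_M(\omega_n^{(1)}))|}{t-s}$ converges, hence the need for continuity of $\tau_M$ at $\omega$. You observe instead that only \emph{lower semicontinuity} of $\tau_M$ is required, and this holds at every $\omega$ and every $M$ since $\tau_M$ is the (truncated) hitting time of the closed set $\{|x|\geq M\}$. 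After the clean reduction of $\zeta_M(\omega)$ to $\sup\{|\omega^{(2)}(t)-\omega^{(2)}(s)|/(t-s) : 0\le s<t<\tau_M(\omega^{(1)})\}$ — which I verified by your three-case analysis and continuity of $\omega^{(2)}$ — the lower bound follows because enlarging the stopping interval only increases the Lipschitz seminorm. The net result is strictly stronger than the lemma as stated: $\zeta_M$ is lower semicontinuous everywhere, for every $M>0$, so both the co-countable exceptional set of levels and the $P$-null set $N(M)$ can be taken empty. Your argument is self-contained (no appeal to \cite[Lemma 11.1.2]{SV}), and it serves equally well in the downstream use in the proof of Proposition \ref{prop: closedness}, Step 2. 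Your closing remark correctly diagnoses why the paper's phrasing allows for exceptional sets: those would indeed be needed if one wanted \emph{upper} semicontinuity (hence continuity) of $\zeta_M$, which can fail at levels where the path grazes $\pm M$ without crossing.
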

\begin{proof}
Due to \cite[Lemma 11.1.2]{SV}, for all but countably many \(M \in \mathbb{R}_+\), there exists a \(P\)-null set \(N = N(M)\) such that \(\omega \mapsto \tau_M (\omega^{(1)})\) is continuous at all \(\omega \not \in N\). Take such an \(M \in \mathbb{R}_+\) and \(\omega \not \in N\). Furthermore, let \((\omega_n)_{n \in \mathbb{N}} \subset \Omega \times \Omega\) be such that \(\omega_n \to \omega\). Then, 
\begin{align*}
\zeta_M (\omega) &= \sup \Big\{ \liminf_{n \to \infty} \frac{| \omega_n^{(2)} (t \wedge \tau_M(\omega^{(1)}_n)) - \omega^{(2)}_n (s \wedge \tau^{(1)}_M (\omega_n))|}{t - s} \colon 0 \leq s < t\Big\} 
\leq \liminf_{n \to \infty} \zeta_M (\omega_n).
\end{align*}
The proof is complete.
\end{proof}
The next lemma follows similar to the proof (of the second part) of \cite[Lemma~7.4]{CN22}, see also \cite[Problem~5.3.15]{KaraShre}. We skip the details for brevity.
\begin{lemma} \label{lem: maximal inequality}
Suppose that Condition \ref{cond: LG} holds.
For every bounded set \(K \subset \mathbb{R}\) and \(T, m > 0\), there exists a constant \(\C > 0\) such that, for all \(s, t \in [0, T]\),
\[
\sup_{x \in K} \sup_{P \in \cR(x)} E^P \Big[ \sup_{r \in [0, T]} |X_r|^m \Big] < \infty, \qquad \sup_{x \in K} \sup_{P \in \cR(x)} E^P \big[ |X_{t} - X_{s}|^m \big] \leq \C |t - s|^{m/2}.
\]
\end{lemma}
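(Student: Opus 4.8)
The plan is to exploit that under any $P \in \cR(x)$, the canonical process $X$ is a continuous semimartingale with absolutely continuous characteristics $(B^P, C^P)$ whose densities $(b^P, a^P) := (dB^P/d\llambda, dC^P/d\llambda)$ lie in $\Theta(t, X)$ for $(\llambda \otimes P)$-a.e.\ $(t, \omega)$; by the Markovian structure of $\Theta$, this means $b^P(t, \omega) = b(f_t(\omega), \omega(t))$ and $a^P(t, \omega) = a(g_t(\omega), \omega(t))$ for suitable selectors, and hence by Condition \ref{cond: LG} one has the pathwise bounds $|b^P(t, \omega)|^2 + |a^P(t, \omega)| \leq \C(1 + |X_t(\omega)|^2)$ for $(\llambda \otimes P)$-a.e.\ $(t, \omega)$. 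Write $X = X_0 + M + \int_0^\cdot b^P\, ds$ where $M$ is the continuous local martingale part with $\langle M \rangle = \int_0^\cdot a^P\, ds$. The moment estimates then follow by the standard localization-and-Gronwall scheme: for the first inequality, apply the Burkholder--Davis--Gundy inequality to $M$ together with Jensen/Hölder on the drift term, insert the linear growth bound, use Fubini to pull the supremum-in-time expectation inside a time integral, and close with Gronwall's inequality; a preliminary localization along the stopping times $\tau_n := \inf\{t : |X_t| \geq n\}$ handles integrability, and the constant obtained depends only on $m$, $T$, $\C$ and $\sup_{x \in K}|x|$, hence is uniform over $x \in K$ and $P \in \cR(x)$.

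For the second (Kolmogorov-type increment) bound, with the a priori moment bound from the first part in hand, one estimates $E^P[|X_t - X_s|^m]$ by splitting into the drift and martingale contributions on $[s,t]$. The drift term is handled by Jensen's inequality, $E^P[|\int_s^t b^P\, dr|^m] \leq |t-s|^{m-1} \int_s^t E^P[|b^P_r|^m]\, dr \leq \C'|t-s|^m$, while the martingale term is handled by BDG followed by Jensen, $E^P[|M_t - M_s|^m] \leq C_m E^P[(\int_s^t a^P_r\, dr)^{m/2}] \leq C_m |t-s|^{m/2 - 1}\int_s^t E^P[|a^P_r|^{m/2}]\, dr \leq \C''|t-s|^{m/2}$, where in both cases the integrals of the moments of $b^P_r$ and $a^P_r$ are bounded uniformly in $r \in [0,T]$, $x \in K$ and $P \in \cR(x)$ using Condition \ref{cond: LG} together with the already-established $E^P[\sup_{r \leq T}|X_r|^m] < \infty$. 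Since $|t-s| \leq T$, the slower-decaying term $|t-s|^{m/2}$ dominates, giving the stated estimate.

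The main obstacle is not conceptual but bookkeeping: one must be careful that the localizing stopping times $\tau_n$ do not spoil uniformity of the constants, which requires running the Gronwall argument on the localized process and only then letting $n \to \infty$ via monotone convergence; and one must check that the BDG constants and the Hölder exponents are chosen so that the final constant genuinely depends on $K$ only through $\sup_{x \in K}|x|$. Since this is a routine adaptation of \cite[Proposition~4.1]{CN22} (cf.\ \cite[Problem~5.3.15]{KaraShre}), the cleanest exposition is to state it as such and refer the reader there for the details, as the authors do.
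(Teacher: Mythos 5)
Your proposal is correct and follows exactly the approach the paper intends: the paper itself gives no details beyond referring to \cite[Proposition~4.1]{CN22} and \cite[Problem~5.3.15]{KaraShre}, which is precisely the localization/BDG/Jensen/Gronwall scheme you spell out, and you correctly identify that the second estimate follows from the first via Jensen and BDG on the martingale and drift parts of the decomposition. Nothing to add.
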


The next results extend \cite[Theorem 4.41]{hol16} and \cite[Theorem 2.5]{neufeld} beyond the case where \(b\) and \(a\) are uniformly bounded and globally Lipschitz continuous.

\begin{proposition} \label{prop: closedness}
Suppose that the Conditions \ref{cond: convexity}, \ref{cond: LG} and \ref{cond: continuity} hold.
The set 
\begin{align*}
\mathfrak{P}(\Theta) := \big\{ P \in \fPas \colon P &\circ X_0^{-1} \in \{\delta_x \colon x \in \bR\}, \ (\llambda \otimes P)\text{-a.e. } (dB^{P} /d\llambda, dC^{P}/d\llambda) \in \Theta(X)  \big\}
\end{align*}
is closed in \(\mathfrak{P}(\Omega)\). 
\end{proposition}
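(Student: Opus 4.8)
The plan is to show closedness of $\mathfrak{P}(\Theta)$ by a standard stability argument for semimartingale laws with controlled characteristics. So let $(P^n)_{n \in \mathbb{N}} \subset \mathfrak{P}(\Theta)$ converge weakly to some $P \in \mathfrak{P}(\Omega)$; we must show $P \in \mathfrak{P}(\Theta)$. First I would observe that the initial condition is stable: since $P^n \circ X_0^{-1} = \delta_{x^n}$ for some $x^n \in \bR$ and $X_0$ is continuous, weak convergence forces $x^n \to x$ for some $x \in \bR$ and $P \circ X_0^{-1} = \delta_x$. Next, the uniform moment and increment bounds from Lemma \ref{lem: maximal inequality} (available because Condition \ref{cond: LG} holds and the $x^n$ lie in a bounded set) give, via Fatou, the same bounds under $P$; in particular they yield uniform integrability of the quantities appearing below and a handle on the growth of the characteristics.

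The heart of the proof is to show that $X$ is a $P$-semimartingale whose characteristics are absolutely continuous with $(dB^P/d\llambda, dC^P/d\llambda) \in \Theta$ $(\llambda \otimes P)$-a.e. The standard route: for each $P^n$ the continuous-semimartingale structure means $X - B^{P^n}$ is a local martingale and $(X - B^{P^n})^2 - C^{P^n}$ is a local martingale, with $B^{P^n}_t = \int_0^t \beta^n_s\, ds$, $C^{P^n}_t = \int_0^t \gamma^n_s\, ds$ and $(\beta^n_s, \gamma^n_s) \in \Theta(s, X) = \{(b(f, X_s), a(f, X_s)) : f \in F\}$. Using Condition \ref{cond: LG} to control $\beta^n, \gamma^n$ by $\C(1 + |X_s|^2)$, one shows the processes
\[
M^n_t := X_t - X_0 - \int_0^t \beta^n_s\, ds, \qquad N^n_t := (M^n_t)^2 - \int_0^t \gamma^n_s\, ds
\]
are true martingales (after localization, with the moment bounds promoting this to $L^1$-boundedness on compacts). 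The point is to pass to the limit. One identifies, in the limit, processes $\beta, \gamma$ with $\int_0^t \beta_s\, ds = B^P_t$, $\int_0^t \gamma_s\, ds = C^P_t$, $P$-a.s.; here I would use the convexity hypothesis (Condition \ref{cond: convexity}), together with Lemma \ref{lem: oconv inclusion theta}, exactly as in the proof that $\cC$ is non-empty/closed in \cite{CN22}: time-averages of $\Theta$-valued processes over shrinking intervals, together with $\oconv$ and the Markovian continuity of $\Theta$, force the limiting differential characteristics back into $\Theta(t, X)$ for $(\llambda \otimes P)$-a.e.\ $(t, \omega)$. Concretely one writes, for fixed $0 \le s < t$ and a bounded $\cF_s$-measurable continuous $g$,
\[
E^{P^n}\!\Big[ g \cdot \big( h(X_t) - h(X_s) - \int_s^t (\mathcal{L}^{f_r}h)(X_r)\, dr \big)\Big] = 0
\]
for suitable test functions $h \in C^2_b$, where $\mathcal{L}^{f}h = b(f,\cdot) h' + \tfrac12 a(f,\cdot) h''$ and $f_r$ realizes $(\beta^n_r,\gamma^n_r)$; continuity of $b, a$ (Condition \ref{cond: continuity}) and the uniform integrability let one pass to a limit of the form $E^P[ g \cdot (h(X_t) - h(X_s) - \int_s^t \ell_r\, dr)] = 0$ with $\ell_r$ a measurable selection of $\{\mathcal{L}^f h(X_r) : f \in F\}$ after applying the $\oconv$/Lemma \ref{lem: oconv inclusion theta} machinery. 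Running this over a countable determining family of $h$ identifies $(dB^P/d\llambda, dC^P/d\llambda)$ as an element of $\Theta(\cdot, X)$ and simultaneously shows $X$ is a $P$-semimartingale with absolutely continuous characteristics, i.e.\ $P \in \fPas$, whence $P \in \mathfrak{P}(\Theta)$.

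The main obstacle I anticipate is the passage to the limit in the characteristics: weak convergence of the laws does not directly give convergence of $\beta^n, \gamma^n$, and these are only measurable selections, not continuous in $\omega$. The device that resolves this — and the reason Condition \ref{cond: convexity} and Lemma \ref{lem: oconv inclusion theta} are in the hypotheses — is to work with the \emph{integrated} characteristics $B^{P^n}, C^{P^n}$ (which do converge, by uniform integrability coming from Lemma \ref{lem: maximal inequality}), extract a limit, and then use that on each small time interval the average of a $\Theta$-valued integrand lies in $\oconv \Theta([t,t+\delta], X)$; shrinking $\delta$ and invoking Lemma \ref{lem: oconv inclusion theta} pushes the limiting density pointwise (in $(t,\omega)$, a.e.) back into $\Theta(t,X)$. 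A secondary technical point is justifying that the relevant local martingales are genuine (uniformly integrable) martingales so that the defining identities survive weak limits; this is exactly where the moment bounds of Lemma \ref{lem: maximal inequality}, valid uniformly over $P \in \cR(x)$ with $x$ in a bounded set, are used. Apart from these, the argument is a fairly routine adaptation of the martingale-problem stability results in \cite{SV} and \cite{CN22}.
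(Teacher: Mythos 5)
Your overall strategy is the same as the paper's: pass to a limit of the \emph{integrated} characteristics jointly with the path, then use the $\oconv$/Lemma \ref{lem: oconv inclusion theta} time-averaging trick to push the limiting differential characteristics back into $\Theta(t, X)$. You correctly flag the main conceptual point (one cannot hope for convergence of the densities $\beta^n, \gamma^n$ themselves, so one must work with $B^n, C^n$), and your use of the moment bounds from Lemma \ref{lem: maximal inequality} is in the right place. However, three ingredients that the paper must actually supply are either missing or misattributed in your sketch.

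First, you assert that $B^{P^n}, C^{P^n}$ ``do converge, by uniform integrability,'' but uniform integrability is not what gives convergence of a sequence of laws of stochastic processes on $C(\bR_+;\bR)$; one needs tightness of the joint laws $P^n \circ (X, B^n, C^n)^{-1}$, which the paper establishes via Aldous' criterion (Step 1 of its proof), again using Condition \ref{cond: LG} and the moment bounds. Second, and more seriously, the $\oconv$ argument --- that $m(B_{t+1/m}-B_t, C_{t+1/m}-C_t) \in \oconv\Theta([t,t+1/m],X)$ --- only makes sense once one knows the limit processes $B, C$ are locally absolutely continuous, and this is not automatic from weak convergence. The paper devotes Step 2 to this, using the lower-semicontinuous functional $\zeta_M$ from Lemma \ref{lem: Lipschitz Contant} together with the linear growth bound to show the weak limits are in fact locally Lipschitz; your proposal does not address it, and without it the time-averaging step cannot get started. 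Third, your closing line ``simultaneously shows $X$ is a $P$-semimartingale with absolutely continuous characteristics'' compresses the genuinely delicate filtration issue (the paper's Step 4): the limit $(B,C)$ lives on a product/Skorokhod space and is \emph{not} a priori adapted to the natural filtration of the coordinate process, so one must pass to dual predictable projections and invoke Stricker's theorem (\cite[Lemma 2.9]{jacod80}, \cite[Proposition 9.24]{jacod79}, via Lemma \ref{lem: jacod restatements}) to identify the actual $P$-characteristics $(B^P, C^P)$ of $X$ with the projections of the limit. Your alternative framing via a martingale-problem identity with test functions $h$ would need to resolve the same adaptedness question before it could yield a measurable $\Theta$-valued selection $(\beta,\gamma)$ adapted to the $X$-filtration, so it does not obviously bypass this step. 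These are not merely cosmetic omissions; they are where the hypotheses of Lemmata \ref{lem: Lipschitz Contant}, \ref{lem: jacod restatements} and the auxiliary Step 1 tightness get used.
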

\begin{proof}
Let \((P^n)_{n \in \mathbb{N}} \subset \mathfrak{P}(\Theta)\) be such that \(P^n \to P\) weakly. 
By definition of \(\mathfrak{P}(\Theta)\), for every \(n \in \mathbb{N}\), there exists a point \(x^n \in \mathbb{R}\) such that \(P^n \circ X_0^{-1} = \delta_{x^n}\) and hence, \(P^n \in \cR(x^n)\). Since \(P^n \to P\) and \(\{\delta_x \colon x \in \mathbb{R}\}\) is closed (\cite[Theorem 15.8]{hitchi}), there exists a \(x^0 \in \mathbb{R}\) such that \(P \circ X^{-1}_0 = \delta_{x^0}\). In particular, \(x^n \to x^0\) and the set \(U := \{x^n \colon n \in \mathbb{N}\}\) is bounded.
It remains to prove that \(P\in \fPas\) with differential characteristics in \(\Theta\).
The proof of this is split into four steps.
In order to execute our program, we need a last bit of auxiliary notation.
For each \(n \in \mathbb{N}\), denote the \(P^n\)-characteristics of \(X\) by \((B^n, C^n)\).
Define \(\Omega^* := \Omega \times \Omega \times \Omega\) and denote the coordinate process on \(\Omega^*\) by \(Y = (Y^{(1)}, Y^{(2)}, Y^{(3)})\). Further, set \(\cF^* := \sigma (Y_s, s \geq 0)\) and let \(\F^* = (\cF^*_s)_{s \geq 0}\) be the right-continuous filtration generated by \(Y\).

\emph{Step 1.} We start by showing that \(\{P^n \circ (X, B^n, C^n)^{-1} \colon n \in \mathbb{N}\}\) is tight on the space \((\Omega^*, \cF^*)\).
Since \(P^n \to P\), it suffices to prove tightness of \(\{P^n \circ (B^n, C^n)^{-1} \colon n \in \mathbb{N}\}\). We use Aldous' tightness criterion (\cite[Theorem~VI.4.5]{JS}), i.e., we show the following two conditions:
\begin{enumerate}
    \item[(a)]
    for every \(N, \varepsilon > 0\) there exists a \(K \in \mathbb{R}_+\) such that
    \[
    \sup_{n \in \mathbb{N}} P^n \Big( \sup_{s \in [0, N]} |B^n_s| + \sup_{s \in [0, N]} |C^n_s| \geq K \Big) \leq \varepsilon;
    \]
    \item[(b)] 
    for every \(N, \varepsilon > 0\),
    \[
    \lim_{\theta \searrow 0} \limsup_{n \to \infty} \sup \big\{P^n (|B^n_T - B^n_S| + |C^n_T - C^n_S| \geq \varepsilon) \big\} = 0, 
    \]
    where the \(\sup\) is taken over all stopping times \(S, T \leq N\) such that \(S \leq T \leq S + \theta\).
\end{enumerate}
For a moment, let us fix \(N > 0\).
Thanks to Lemma \ref{lem: maximal inequality}, recalling that \(P^n \in \cR(x^n)\) and that \(U = \{x^n \colon n \in \mathbb{N}\}\) is bounded, we have
\begin{align} \label{eq: second moment bound}
\sup_{n \in \mathbb{N}} E^{P^n} \Big[ \sup_{s \in [0, N]} |X_s|^2 \Big] 
\leq \sup_{x \in U} \sup_{P \in \cR (x)} E^P \Big[ \sup_{s \in [0, N]} |X_s|^2 \Big]
< \infty.
\end{align}
Now, by the definition of \(\Theta\) and the linear growth assumption (Condition \ref{cond: LG}), we get that \(P^n\)-a.s.
\[
\sup_{s \in [0, N]} |B^n_s| + \sup_{s \in [0, N]} |C^n_s| \leq \C \Big( 1 + \sup_{s \in [0, N]} |X_s|^2 \Big),
\]
where the constant \(\C>0\) might depend on \(N\) but is independent of \(n\).
By virtue of \eqref{eq: second moment bound}, this bound immediately yields (a). For (b), take two stopping times \(S, T \leq N\) such that \(S \leq T \leq S + \theta\) for some \(\theta > 0\). Then, using again the definition of \(\Theta\) and the linear growth assumptions, we get \(P^n\)-a.s.
\[
|B^n_T - B^n_S| + |C^n_T - C^n_S| \leq \C (T - S) \Big( 1 + \sup_{s \in [0, N]} |X_s|^2 \Big) \leq \C \theta \Big( 1 + \sup_{s \in [0, N]} |X_s|^2 \Big),
\]
which yields (b) by virtue of \eqref{eq: second moment bound}. We conclude that \(\{P^n \circ (X, B^n, C^n)^{-1} \colon n \in \mathbb{N}\}\) is tight. 
Up to passing to a subsequence, from now on we assume that \(P^n \circ (X, B^n, C^n)^{-1} \to Q\) weakly, where \(Q\) is a probability measure on \((\Omega^*, \cF^*)\).

\emph{Step 2.}  Next, we show that \(Y^{(2)}\) and \(Y^{(3)}\) are \(Q\)-a.s. locally absolutely continuous.
Thanks to Lemma~\ref{lem: Lipschitz Contant}, there exists a dense set \(D \subset \mathbb{R}_+\) such that, for every \(M \in D\), the map \(\zeta_M\) is \(Q \circ (Y^{(1)}, Y^{(2)})^{-1}\)-a.s. lower semicontinuous. By virtue of Condition \ref{cond: LG} and the definition of \(\tau_M\), for every \(M \in D\) there exists a constant \(\C = \C (M) > 0\) such that \(P^n(\zeta_M (X, B^n) \leq \C) = 1\) for all \(n \in \mathbb{N}\). As \(\zeta_M\) is \(Q \circ (Y^{(1)}, Y^{(2)})^{-1}\)-a.s. lower semicontinuous, \cite[Example 17, p. 73]{pollard} yields that 
\[
0 = \liminf_{n \to \infty} P^n (\zeta_M (X, B^n) > \C) \geq Q (\zeta_M (Y^{(1)}, Y^{(2)}) > \C). 
\]
Further, since \(D\) is dense in \(\mathbb{R}_+\), we conclude that \(Q\)-a.s. \(Y^{(2)}\) is locally Lipschitz continuous, i.e., in particular locally absolutely continuous. Similarly, we get that \(Y^{(3)}\) is \(Q\)-a.s. locally Lipschitz and hence, locally absolutely continuous. 

\emph{Step 3.}
We define a map \(
\Phi \colon \Omega^* \to \Omega
\)
by \(\Phi (\omega^{(1)}, \omega^{(2)}, \omega^{(3)}) := \omega^{(1)}\). Clearly, we have \(Q \circ \Phi^{-1} = P\) and \(Y^{(1)} = X \circ \Phi\). 
In this step, we prove that \((\llambda \otimes Q)\)-a.e. \((dY^{(2)} /d \llambda, dY^{(3)}/ d \llambda) \in \Theta(Y^{(1)})\). For a moment, let us fix \(m \in \mathbb{N}\).
By virtue of \cite[Corollary 8, p. 48]{diestel}, \(P^n\)-a.s. for \(\llambda\)-a.a. \(t \in \mathbb{R}_+\), we have 
\begin{equation}\label{eq: P as inclusion theta}
\begin{split}
m (B^n_{t + 1/m} - B^n_t, C^n_{t + 1/m} - C^n_t) &\in \oconv ( dB^n / d \llambda, d C^n / d \llambda) ([ t, t + 1/m ]) \\&\subset \oconv \Theta (X([t, t + 1/m])).
\end{split}
\end{equation}
By Skorokhod's coupling theorem, with little abuse of notation, there exist random variables \[(X^0, B^0, C^0), (X^1, B^1, C^1), (X^2, B^2, C^2), \dots\] defined on some probability space \((\Sigma, \mathcal{G}, R)\) such that \((X^0, B^0, C^0)\) has distribution \(Q\), \((X^n, B^n, C^n)\) has distribution \(P^n\circ (X, B^n, C^n)^{-1}\) and \(R\)-a.s. \((X^n, B^n, C^n) \to (X^0, B^0, C^0)\) in the local uniform topology. We deduce from Lemma \ref{lem: continuity abstract} that the correspondence \(\omega \mapsto \Theta (\omega([t, t + 1 /m]))\) is continuous  for every \(t \in \bR_+\). Furthermore,  for every \(t \in \bR_+\), as \(\oconv \Theta (\omega([t, t + 1/m]))\) is compact (by \cite[Theorem 5.35]{hitchi}) for every \(\omega \in \Omega\), it follows from \cite[Theorem 17.35]{hitchi} that the correspondence \(\omega \mapsto \oconv \Theta (\omega([t, t + 1/m]))\) is upper hemicontinuous and compact-valued. Thus, by virtue of \eqref{eq: P as inclusion theta} and \cite[Theorem 17.20]{hitchi}, we get, \(R\)-a.s. for \(\llambda\)-a.a. \(t \in \bR_+\), that 
\[
m (B^0_{t + 1/m} - B^0_t, C^0_{t + 1/m} - C^0_t) \in \oconv \Theta ( X^0([t, t + 1/m])).
\]
Notice that \((\llambda \otimes R)\)-a.e.
\[
(d B^0 / d \llambda, d C^0 / d \llambda) = \lim_{m \to \infty} m (B^0_{\cdot + 1/m} - B^0_\cdot, C^0_{\cdot + 1/m} - C^0_\cdot).
\]
Now, using Lemma \ref{lem: oconv inclusion theta}, we conclude that \(R\)-a.s. for \(\llambda\)-a.a. \(t \in \mathbb{R}_+\)
\[
(d B^0 / d \llambda, d C^0 / d \llambda) (t) \in \bigcap_{m \in \mathbb{N}} \oconv \Theta (X^0([t, t + 1/m])) \subset \Theta (X^0_t).
\]
This shows that \( (\llambda \otimes Q)\)-a.e. \((dY^{(2)} /d \llambda, dY^{(3)}/ d \llambda) \in \Theta (Y^{(1)})\).

\emph{Step 4.} In the final step of the proof, we show that \(P \in \fPas\) and we relate \((Y^{(2)}, Y^{(3)})\) to the \(P\)-semimartingale characteristics of the coordinate process.
Thanks to \cite[Lemma 11.1.2]{SV}, there exists a dense set \(D \subset \bR_+\) such that \(\tau_M \circ \Phi\) is \(Q\)-a.s. continuous for all \(M \in D\). Take some \(M \in D\). Since \(P^n \in \fPas\), it follows from the definition of the first characteristic that the process \(X_{\cdot \wedge \tau_M} - B^n_{\cdot \wedge \tau_M}\) is a local \(P^n\)-\(\F_+\)-martingale. Furthermore, by the definition of the stopping time \(\tau_M\) and the linear growth assumption (Condition \ref{cond: LG}), we see that \(X_{\cdot \wedge \tau_M} - B^n_{\cdot \wedge \tau_M}\) is \(P^n\)-a.s. bounded by a constant independent of \(n\), which, in particular, implies that it is a true \(P^n\)-\(\F_+\)-martingale. Now, it follows from \cite[Proposition~IX.1.4]{JS} that \(Y^{(1)}_{\cdot \wedge \tau_M \circ \Phi} - Y^{(2)}_{\cdot \wedge \tau_M \circ \Phi}\) is a \(Q\)-\(\F^*\)-martingale. Recalling that \(Y^{(2)}\) is \(Q\)-a.s. locally absolutely continuous by Step 2, this means that \(Y^{(1)}\) is a \(Q\)-\(\F^*\)-semimartingale with first characteristic \(Y^{(2)}\). Similarly, we see that the second characteristic is given by \(Y^{(3)}\). Finally, we need to relate these observations to the probability measure \(P\) and the filtration \(\F_+\). We denote by \(A^{p, \Phi^{-1}(\F_+)}\) the dual predictable projection of some process \(A\), defined on \((\Omega^*, \cF^*)\), to the filtration \(\Phi^{-1}(\F_+)\). Recall from \cite[Lemma 10.42]{jacod79} that, for every \(t \in \bR_+\), a random variable \(Z\) on \((\Omega^*, \cF^*)\) is \(\Phi^{-1}(\cF_{t+})\)-measurable if and only if it is \(\cF^*_t\)-measurable and \(Z (\omega^{(1)}, \omega^{(2)}, \omega^{(3)})\) does not depend on \((\omega^{(2)}, \omega^{(3)})\).
Thanks to Stricker's theorem (see, e.g., \cite[Lemma~2.7]{jacod80}), \(Y^{(1)}\) is a \(Q\)-\(\Phi^{-1} (\F_+)\)-semimartingale. 
Notice that each \(\tau_M \circ \Phi\) is a \(\Phi^{-1}(\F_+)\)-stopping time and recall from Step 3 that \((\llambda \otimes Q)\)-a.e. \((d Y^{(2)}/ d \llambda, d Y^{(3)}/ d \llambda) \in \Theta(Y^{(1)})\). Hence, by definition of \(\tau_M\) and the linear growth assumption, for every \(M \in D\), we have
\[
E^Q \big[ \on{Var} (Y^{(2)})_{\tau_M \circ \Phi} \big] + E^Q \big[ \on{Var}(Y^{(3)})_{\tau_M \circ \Phi} \big] = E^Q \Big[ \int_0^{\tau_M} \Big(\Big| \frac{d Y^{(2)}}{d \llambda} \Big| + \Big| \frac{d Y^{(3)}}{d \llambda} \Big| \Big) d \llambda \Big] < \infty,
\]
where \(\on{Var} (\cdot)\) denotes the variation process.
By virtue of this, we get from \cite[Proposition 9.24]{jacod79} that the \(Q\)-\(\Phi^{-1}(\F_+)\)-characteristics of \(Y^{(1)}\) are given by \(((Y^{(2)})^{p, \Phi^{-1}(\F_+)}, (Y^{(3)})^{p, \Phi^{-1}(\F_+)})\). 
Hence, thanks to Lemma~\ref{lem: jacod restatements} below, the coordinate process \(X\) is a \(P\)-\(\F_+\)-semimartingale whose characteristics \((B^P, C^P)\) satisfy \(Q\)-a.s.
\[(B^P, C^P) \circ \Phi = ((Y^{(2)})^{p, \Phi^{-1}(\F_+)}, (Y^{(3)})^{p, \Phi^{-1}(\F_+)}).\] Consequently, we deduce from the Steps~2 and 3, and \cite[Theorem~5.25]{HWY}, that \(P\)-a.s. \((B^P, C^P) \ll \llambda\) and 
\begin{align*}
(\llambda \otimes P) \big( (d B^P / d \llambda&, d C^P / d \llambda) \not \in \Theta(X) \big) 
\\&= (\llambda \otimes Q \circ \Phi^{-1}) \big( (d B^P / d \llambda, d C^P / d \llambda) \not \in \Theta(X) \big)
\\&= (\llambda \otimes Q) \big( E^Q [(d Y^{(2)} / d \llambda, d Y^{(3)} / d \llambda) | \Phi^{-1} (\F_+)_-] \not \in \Theta(Y^{(1)})\big) = 0,
\end{align*}
where we use \cite[Corollary 8, p. 48]{diestel} for the final equality.
This means that \(P \in \mathfrak{P}(\Theta) \) and the proof is complete.
\end{proof}

\begin{proposition} \label{prop: compactness}
Suppose that the Conditions \ref{cond: convexity}, \ref{cond: LG} and \ref{cond: continuity} hold.
For any compact set \(K \subset \mathbb{R}\), the set 
\begin{align*}
\cR^\circ := \big\{ P \in \fPas \colon P &\circ X_0^{-1} \in \{\delta_x \colon x \in K\}, \ (\llambda \otimes P)\text{-a.e. } (dB^{P} /d\llambda, dC^{P}/d\llambda) \in \Theta(X)  \big\}
\end{align*}
is compact in \(\mathfrak{P}(\Omega)\). 
\end{proposition}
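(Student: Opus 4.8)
The plan is to show that $\cR^\circ$ is closed and relatively compact in $\mathfrak{P}(\Omega)$; since $\Omega = C(\bR_+;\bR)$ is Polish, $\mathfrak{P}(\Omega)$ is metrizable and, by Prokhorov's theorem, these two properties together yield compactness. The starting point is the decomposition
\[
\cR^\circ = \mathfrak{P}(\Theta) \cap \big\{ P \in \mathfrak{P}(\Omega) \colon P \circ X_0^{-1} \in \{ \delta_x \colon x \in K \} \big\},
\]
together with the elementary observation that any $P \in \cR^\circ$ with $P \circ X_0^{-1} = \delta_x$ for some $x \in K$ in fact belongs to $\cR(x)$. This identification is what allows one to apply Lemma \ref{lem: maximal inequality} uniformly over $\cR^\circ$.

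\emph{Closedness.} By Proposition \ref{prop: closedness}, $\mathfrak{P}(\Theta)$ is closed in $\mathfrak{P}(\Omega)$. For the initial-law constraint, note that $P \mapsto P \circ X_0^{-1}$ is weakly continuous (evaluation at $0$ is continuous on $\Omega$), that $x \mapsto \delta_x$ is a homeomorphism of $\bR$ onto the closed set $\{\delta_x \colon x \in \bR\}$ by \cite[Theorem 15.8]{hitchi}, and that $K$ is compact; hence $\{\delta_x \colon x \in K\}$ is compact, in particular closed, and its preimage under $P \mapsto P \circ X_0^{-1}$ is closed in $\mathfrak{P}(\Omega)$. Thus $\cR^\circ$ is an intersection of two closed sets and therefore closed.

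\emph{Relative compactness.} Fix $m = 4$. Since $K$ is bounded and every $P \in \cR^\circ$ lies in some $\cR(x)$ with $x \in K$, Lemma \ref{lem: maximal inequality} provides, for each $N \in \N$, a constant $\C > 0$ with
\[
\sup_{P \in \cR^\circ} E^P \Big[ \sup_{r \in [0, N]} |X_r|^4 \Big] < \infty, \qquad \sup_{P \in \cR^\circ} E^P\big[ |X_t - X_s|^4 \big] \le \C\,|t - s|^2, \quad s, t \in [0, N].
\]
The first bound controls the initial values uniformly (indeed $|X_0| \le \sup_{x \in K} |x|$ on all of $\cR^\circ$), while the second is a Kolmogorov--Chentsov increment estimate with exponent $2 > 1$; by the classical tightness criterion on $C([0, N]; \bR)$ (see, e.g., \cite{KaraShre}), the family of restricted laws $\{ P \circ (X|_{[0,N]})^{-1} \colon P \in \cR^\circ \}$ is tight for every $N$. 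A routine diagonal argument over $N \in \N$ then upgrades this to tightness of $\cR^\circ$ in $\mathfrak{P}(\Omega)$, hence relative compactness by Prokhorov's theorem. Together with closedness, this shows $\cR^\circ$ is compact.

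\emph{Expected main obstacle.} There is essentially no new hard step: the substantive content is carried by Proposition \ref{prop: closedness} (closedness of $\mathfrak{P}(\Theta)$, which is where the characteristics pass to the limit) and by the moment bounds of Lemma \ref{lem: maximal inequality}, so the remainder is bookkeeping. The two points deserving a little care are the uniformity of the moment estimates over $\cR^\circ$ — handled by first recording that $P \in \cR(x)$ for the $x \in K$ with $P \circ X_0^{-1} = \delta_x$ — and the passage from tightness on each $C([0,N];\bR)$ to tightness on $C(\bR_+;\bR)$, which is a standard diagonal/restriction argument.
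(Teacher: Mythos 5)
Your proof is correct. The closedness argument is identical to the paper's: both decompose $\cR^\circ$ as the intersection of $\mathfrak{P}(\Theta)$ with the initial-law constraint set, invoke Proposition~\ref{prop: closedness} for the former, and use closedness of $\{\delta_x \colon x \in K\}$ for the latter. The only place you diverge is the relative compactness step: the paper disposes of it with a single citation to \cite[Lemma~4.5]{CN22}, while you give a self-contained derivation from the moment bounds in Lemma~\ref{lem: maximal inequality} via the Kolmogorov--Chentsov tightness criterion with $m=4$ and a restriction/diagonal argument to pass from $C([0,N];\bR)$ to $C(\bR_+;\bR)$. This is exactly the kind of argument the cited external lemma encapsulates, so the two proofs are substantively the same, with yours being the more transparent and self-contained version within the present paper. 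One small note of care which you already flagged: the uniform applicability of Lemma~\ref{lem: maximal inequality} over $\cR^\circ$ hinges on the identification $P \in \cR(x)$ for the $x \in K$ determined by $P \circ X_0^{-1} = \delta_x$, and you stated this explicitly — good.
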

\begin{proof}
Thanks to \cite[Lemma 7.4]{CN22}, we already know that \(\cR^\circ\) is relatively compact. We note that \(\cR^\circ \) is closed, being the intersection of the closed sets \( \mathfrak{P}(\Theta)\) and
\[ 
\big\{ P \in \mathfrak{P}(\Omega) \colon P \circ X_0^{-1} \in \{ \delta_x \colon x \in K \} \big \}.
\]
While \( \mathfrak{P}(\Theta)\) is closed by Proposition \ref{prop: closedness}, the latter set is closed as \(K\) is closed (\cite[Theorem 15.8]{hitchi}).
This completes the proof.
\end{proof}

\subsection{Upper Hemicontinuity of \(\mathcal{R}\)}

\begin{proposition} \label{prop: upper hemicontinuous}
Suppose that the Conditions \ref{cond: convexity}. \ref{cond: LG} and \ref{cond: continuity} hold. Then, the correspondence \( \cR \) is upper hemicontinuous.
\end{proposition}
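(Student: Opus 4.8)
The plan is to package the two structural facts already in hand — the closedness of $\mathfrak{P}(\Theta)$ (Proposition \ref{prop: closedness}) and the compactness of $\cR^\circ$ over a compact set of initial values (Proposition \ref{prop: compactness}) — into an upper hemicontinuity statement. Concretely, I would first verify that the correspondence $\cR$ has closed graph, then localize around an arbitrary point $x_0 \in \bR$ to a compact neighborhood, and finally invoke the closed graph theorem for correspondences taking values in a compact space, i.e.\ \cite[Theorem 17.11]{hitchi}.

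For the closed graph, I would write
\[
\on{gr} \cR = \big( \bR \times \mathfrak{P}(\Theta) \big) \cap \Delta, \qquad \Delta := \big\{ (x, P) \in \bR \times \mathfrak{P}(\Omega) \colon P \circ X_0^{-1} = \delta_x \big\}.
\]
The first set is closed by Proposition \ref{prop: closedness}. The set $\Delta$ is the preimage of the diagonal of $\mathfrak{P}(\bR) \times \mathfrak{P}(\bR)$ under the map $(x, P) \mapsto (\delta_x, P \circ X_0^{-1})$, which is continuous: $x \mapsto \delta_x$ is continuous, and $P \mapsto P \circ X_0^{-1}$ is continuous by the continuous mapping theorem, since $X_0 \colon \Omega \to \bR$ is continuous for the local uniform topology. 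As $\mathfrak{P}(\bR)$ is metrizable, its diagonal is closed, so $\Delta$ is closed and hence $\on{gr} \cR$ is closed in $\bR \times \mathfrak{P}(\Omega)$. Now fix $x_0 \in \bR$ and set $\overline{U} := [x_0 - 1, x_0 + 1]$. By Proposition \ref{prop: compactness} applied with $K = \overline{U}$, the set $\cR^\circ = \bigcup_{x \in \overline{U}} \cR (x)$ is compact in $\mathfrak{P}(\Omega)$. The restricted correspondence $\cR|_{\overline{U}} \colon \overline{U} \twoheadrightarrow \cR^\circ$ then has closed graph (namely $\on{gr}\cR \cap (\overline{U} \times \cR^\circ)$) and takes values in the compact metrizable space $\cR^\circ$, so by \cite[Theorem 17.11]{hitchi} it is upper hemicontinuous. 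Since $\overline{U}$ is a neighborhood of $x_0$ and $x_0$ was arbitrary, $\cR$ is upper hemicontinuous.

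The substantial work has already been carried out in Propositions \ref{prop: closedness} and \ref{prop: compactness}; here the only point needing genuine care is that the initial-law constraint $P \circ X_0^{-1} = \delta_x$ is stable under joint weak limits of $(x^n, P^n)$, which is precisely the closedness of $\Delta$ established above, and the localization needed because $\mathfrak{P}(\Omega)$ itself is not compact. An alternative, equivalent route would bypass \cite[Theorem 17.11]{hitchi} and argue sequentially: given $x^n \to x$ and $P^n \in \cR(x^n)$, apply Proposition \ref{prop: compactness} with $K = \{x\} \cup \{x^n : n \in \bN\}$ to extract a weakly convergent subsequence $P^{n_k} \to P \in \cR^\circ$, then identify its initial law via $\delta_{x^{n_k}} = P^{n_k} \circ X_0^{-1} \to P \circ X_0^{-1}$ and $x^{n_k} \to x$ to conclude $P \in \cR(x)$.
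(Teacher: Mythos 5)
Your proof is correct. The argument is sound: the graph decomposition $\on{gr}\cR = (\bR \times \mathfrak{P}(\Theta)) \cap \Delta$ is accurate, $\Delta$ is indeed closed since $x \mapsto \delta_x$ and $P \mapsto P \circ X_0^{-1}$ are both continuous and the diagonal of the metrizable space $\mathfrak{P}(\bR)$ is closed, and the localization to $\overline{U}$ together with the compactness of $\cR^\circ$ from Proposition \ref{prop: compactness} puts you exactly in the hypotheses of the closed graph theorem for correspondences. Nonemptiness of $\cR(x)$ is guaranteed by Standing Assumption \ref{SA: non empty}, and upper hemicontinuity is a local property, so the patching over all $x_0$ is fine. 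The alternative sequential route you sketch at the end is also valid.

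Your route differs from the paper's in structure rather than in substance. The paper's proof of Proposition \ref{prop: upper hemicontinuous} is a one-liner: it deduces the statement as the $t = 0$ case of the stronger, time-dependent Proposition \ref{prop: K upper hemi and compact} asserting upper hemicontinuity of $(t, x) \mapsto \cK(t, x)$, which is proved by a sequential argument (showing that the lower inverse of a closed set is closed, using the compactness of a suitable enlargement of $\cR^\circ$ to extract a limit and then Proposition \ref{prop: closedness} plus the interplay of the shift maps $P \mapsto P_t, P^t$ to identify it). You instead prove the statement about $\cR$ directly and self-containedly, packaging the closedness and compactness facts neatly through the closed graph theorem \cite[Theorem 17.11]{hitchi}. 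Both approaches rest on the same two pillars, Propositions \ref{prop: closedness} and \ref{prop: compactness}. Your version is cleaner and more modular for the specific statement at hand; the paper's more elaborate proof of the $\cK$-version is motivated by its separate use in the strong Markov selection argument, where the extra time variable is needed and the closed graph route would require additional bookkeeping with the maps $\gamma_t$ and $\theta_t$.
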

\begin{proof}
This follows as a special case of Proposition \ref{prop: K upper hemi and compact} below.
\end{proof}

\subsection{Proof of Theorem \ref{thm: main r}}
By Proposition \ref{prop: compactness}, \( \cR \) is compact-valued, while Proposition \ref{prop: upper hemicontinuous} provides upper hemicontinuity of \( \cR \). \qed

\section{Proof of Theorem \ref{thm: new very main}} \label{sec: proof main result}
First of all, parts (iii) and (iv), i.e., the \(C_0\) and the (uniform) strong \(\usc_b\)--Feller properties, follow directly from the Feller selection principle given by Theorem \ref{theo: Feller selection}.

We now discuss part (i), i.e., the \(\usc_b\)--Feller property.
Let \( \psi \in \usc_b(\mathbb{R}; \mathbb{R})\).
Notice that 
\(
\of 0, \infty \of \hspace{0.1cm} \ni (t, \omega) \mapsto \psi(\omega(t))
\)
is upper semicontinuous and bounded. Thus, thanks to \cite[Theorem~8.10.61]{bogachev}, the map 
\begin{align} \label{eq: joint continuity semigroup proof}
\mathbb{R}_+ \times \mathfrak{P}(\Omega) \ni (t, P) \mapsto E^{P}\big[\psi(X_{ t}) \big]\end{align}
is upper semicontinuous, too.
By Theorem \ref{thm: main r}, the compact-valued correspondence \( \bR_+ \times \bR \ni (t,x) \mapsto \{t\} \times \cR(x) \) is upper hemicontinuous, being the finite product of upper hemicontinuous correspondences with compact values, cf. \cite[Theorem 17.28]{hitchi}. 
Thus, upper semicontinuity of 
\( (t,x) \mapsto \cE^x(\psi(X_t)) \)
follows from the upper semicontinuity of \eqref{eq: joint continuity semigroup proof} and
(a version of) Berge's maximum theorem as given by \cite[Lemma 17.30]{hitchi}. This completes the proof of (i).

Part (ii), i.e., the \(C_b\)--Feller property, follows along the same lines when additionally Theorem~\ref{theo: lower hemi} and \cite[Lemma~17.29]{hitchi} are taken into consideration. We omit the details for brevity.
\qed

\section{Markov and Feller Selection Principles: Proof of Theorems \ref{theo: strong Markov selection} and \ref{theo: Feller selection} } 
\label{sec: feller selection}

The proof of the strong Markov selection principle, given by Theorem \ref{theo: strong Markov selection}, is based on some fundamental ideas of Krylov \cite{krylov1973selection} for Markovian selection as worked out in the monograph \cite{SV} of Stroock and Varadhan, see also \cite{nicole1987compactification,hausmann86}. The main technical steps in the argument are to establish stability under conditioning and pasting of a certain sequence of correspondences. 

The proof of the Feller selection principle, given by Theorem \ref{theo: Feller selection}, is based on the observation that any strong Markov selection is already a (uniform) strong Feller and \(C_0\)--Feller selection in case the system carries enough randomness, which is ensured by our (uniform) ellipticity condition. 
\subsection{Proof of the Markov Selection Principle: Theorem \ref{theo: strong Markov selection}}
This section is split into two parts. We start with some properties of the correspondence \(\cK\) and then finalize the proof in the second part.
\subsubsection{Preparations}
The following lemma is a restatement of a path-continuous version of \cite[Lemma~III.3.38, Theorem~III.3.40]{JS}. 
\begin{lemma} \label{lem: JS convex}
Let \(P, Q \in \fPs := \fPs (0)\) and denote the characteristics of the coordinate process by \((B^P, C^P)\) and \((B^Q, C^Q)\), respectively. Further, take \(\alpha \in (0, 1)\) and set 
\[
R := \alpha P + (1 - \alpha) Q.
\]
Then, \(P \ll R, Q \ll R\) and there are versions of the Radon--Nikodym density processes \(dP/dR |_{\mathcal{F}_\cdot} = Z^P\) and \(dQ/dR |_{\mathcal{F}_\cdot} = Z^Q\) such that identically
\begin{align}\label{eq: ZP ZQ convex combi}
\alpha Z^P + (1 - \alpha) Z^Q = 1, \quad 0 \leq Z^P \leq 1/\alpha, \quad 0 \leq Z^Q \leq 1/(1 - \alpha).
\end{align}
Moreover, \(R \in \fPs\) and the \(R\)-characteristics \((B^R, C^R)\) of the coordinate process satisfy
\[
d B^R = \alpha Z^P d B^P + (1 - \alpha) Z^Q d B^Q, \qquad d C^R = \alpha Z^P d C^P + (1 - \alpha) Z^Q d C^Q.
\]
\end{lemma}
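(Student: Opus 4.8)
The plan is to reduce the statement to the corresponding classical result of Jacod--Shiryaev for \cadlag semimartingales (namely \cite[Lemma III.3.38, Theorem III.3.40]{JS}) and to check that the path-continuous setting introduces no complications. First I would recall the absolute continuity: since $P \leq \tfrac{1}{\alpha} R$ and $Q \leq \tfrac{1}{1-\alpha} R$ on $\cF$, both $P$ and $Q$ are absolutely continuous with respect to $R$, with bounded densities. Passing to the restrictions $P|_{\cF_t}, Q|_{\cF_t}, R|_{\cF_t}$ gives density processes $Z^P_t := dP/dR|_{\cF_t}$ and $Z^Q_t := dQ/dR|_{\cF_t}$, which are nonnegative $R$-martingales for the filtration $\F$ (or its right-continuous augmentation under $R$). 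The algebraic identity $\alpha Z^P + (1-\alpha) Z^Q = 1$ holds $R$-a.s.\ for each fixed $t$ because $\alpha P|_{\cF_t} + (1-\alpha) Q|_{\cF_t} = R|_{\cF_t}$; since all three processes admit \cadlag modifications, the identity holds identically in $t$ for suitably chosen versions, and this forces the stated pointwise bounds $0 \le Z^P \le 1/\alpha$ and $0 \le Z^Q \le 1/(1-\alpha)$.

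Next I would address the semimartingale property and the characteristics. That $R \in \fPs$ is immediate: a finite convex combination of semimartingale measures is a semimartingale measure, because $X$ is an $R$-semimartingale whenever it is a $P$- and a $Q$-semimartingale (one can, e.g., localize and use that $X$ has $R$-a.s.\ paths of finite quadratic variation, or simply invoke that the semimartingale property is stable under equivalent-on-$\cF_t$ changes of measure combined with \cite[Proposition IX.1.17]{JS}). For the characteristics, the key tool is Girsanov's theorem for semimartingale characteristics: under the change of measure from $R$ to $P$ with density process $Z^P$, the first characteristic transforms via the predictable bracket $\langle Z^P/Z^P_-, X \rangle$ and the second is invariant (continuous case), and symmetrically for $Q$. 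The formula in \cite[Theorem III.3.40]{JS} then gives, after reassembling, precisely $dB^R = \alpha Z^P dB^P + (1-\alpha) Z^Q dB^Q$ and $dC^R = \alpha Z^P dC^P + (1-\alpha) Z^Q dC^Q$, where one uses $C^P = C^Q = C^R$ ($=C$, the intrinsic quadratic variation, which does not depend on the measure) in the path-continuous case, so the second formula is really the tautology $dC = (\alpha Z^P + (1-\alpha) Z^Q)\, dC = dC$. I would remark that $dC^P, dC^Q, dC^R$ should be read as these common $C$ weighted by the densities only formally; the substantive content is the drift formula.

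The one genuine subtlety — and the step I expect to be the main obstacle — is the filtration bookkeeping. The paper deliberately does \emph{not} make $\F$ right-continuous, whereas the Jacod--Shiryaev machinery is stated for filtrations satisfying the usual conditions. So I would proceed as in \cite[Proposition 2.2]{neufeld2014measurability}: work with the right-continuous augmentations $\F^P_+, \F^Q_+, \F^R_+$, note that $X$ remains a semimartingale with the same characteristics relative to these enlargements (semimartingale characteristics are insensitive to the usual augmentation), apply the density and Girsanov computations there, and then transfer the resulting identities back to $\F$. Because the densities $Z^P, Z^Q$ are bounded martingales they have \cadlag $\F_+$-modifications that are also $\F$-adapted up to modification, so the final identities are meaningful and hold for $\F$-versions. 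Everything else — the convexity identity \eqref{eq: ZP ZQ convex combi}, the finite convex combination, the drift formula — is then routine bookkeeping, and I would present it compactly rather than in full detail, citing \cite{JS} for the \cadlag prototype and flagging only the path-continuous simplifications ($C$ measure-independent, no jump terms) and the non-augmented-filtration caveat.
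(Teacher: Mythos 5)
The paper gives no independent proof of this lemma — it simply states that the lemma is a restatement of \cite[Lemma III.3.38, Theorem III.3.40]{JS} in the path-continuous case. Your proposal follows exactly that route, spelling out the reduction, so you and the paper are in agreement; your additional remarks on the filtration bookkeeping (the paper's $\F$ is not right-continuous, whereas the Jacod--Shiryaev machinery assumes the usual conditions) and on the measure-independence of $C$ in the continuous case are correct and, if anything, fill in details the paper leaves implicit.

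One small point worth tightening if you wanted to write this out fully: the semimartingale property of $R$ is most cleanly obtained not by the quadratic-variation or change-of-measure heuristics you sketch, but directly from the Bichteler--Dellacherie characterization (boundedness in probability of elementary stochastic integrals), since $R(\,|H\cdot X| > K\,) \le \alpha\, P(\,|H\cdot X| > K\,) + (1-\alpha)\, Q(\,|H\cdot X| > K\,)$ shows the defining property is stable under convex combinations. This is essentially what underlies the cited Jacod--Shiryaev lemma; once $R \in \fPs$ is known, the characteristics formula does follow from Girsanov's theorem for characteristics applied to $Z^P$ and $Z^Q$ as you describe.
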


The following lemma is a restatement of \cite[Lemma 2.9 (a)]{jacod80} for a path-continuous setting. 
\begin{lemma} \label{lem: jacod restatements}
	Take two filtered probability spaces \(\B^* = (\Omega^*, \mathcal{F}^*, \F^* = (\mathcal{F}^*_t)_{t \geq 0}, P^*)\) and  \(\B' = (\Omega', \mathcal{F}', \F' = (\mathcal{F}'_t)_{t \geq 0}, P')\) with right-continuous filtrations and the property that there is a map \(\phi \colon \Omega' \to \Omega^*\) such that
	\(
	\phi^{-1} (\mathcal{F}^*) \subset \mathcal{F}',P^* = P' \circ \phi^{-1}\) and \(\phi^{-1} (\mathcal{F}^*_t) = \mathcal{F}'_t\) for all \(t \in \mathbb{R}_+\).
Then,
		 \(X^*\) is a continuous semimartingale on \(\B^*\) if and only if \(X' = X^* \circ \phi\) is a continuous semimartingale on \(\B'\). Moreover, \((B^*, C^*)\) are the characteristics of \(X^*\) if and only if \((B^* \circ \phi, C^* \circ \phi)\) are the characteristics of \(X' = X^* \circ \phi\).
\end{lemma}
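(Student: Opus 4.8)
The plan is to reduce the assertion to a transfer principle for local martingales along $\phi$, after observing that everything except the (local) martingale structure transfers for free. First I would record that $B^* \circ \phi$ and $C^* \circ \phi$ inherit all pathwise and measurability properties of $B^*, C^*$: their paths coincide (along $\phi$) with paths of $B^*, C^*$, so $B^*\circ\phi$ is continuous of finite variation, $C^*\circ\phi$ is continuous and increasing, both vanish at $0$; and they are $\F'$-adapted (hence $\F'$-predictable, being continuous) because $B^*_t\circ\phi$ and $C^*_t\circ\phi$ are $\phi^{-1}(\mathcal F^*_t) = \mathcal F'_t$-measurable. Hence, by the martingale characterization of the characteristics of a continuous semimartingale (see, e.g., \cite[Theorem~II.2.42]{JS}, specialized to the continuous case), the lemma follows once I establish the following transfer principle: for a continuous $\F^*$-adapted process $N^*$ on $\B^*$ with $N^*_0 = 0$, the process $N^*$ is a local $P^*$-$\F^*$-martingale if and only if $N^* \circ \phi$ is a local $P'$-$\F'$-martingale. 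Indeed, applying this to $N^* := X^* - X^*_0 - B^*$ and to $N^* := (X^* - X^*_0 - B^*)^2 - C^*$ yields both implications of the lemma, since these $N^*\circ\phi$ are exactly $X' - X'_0 - (B^*\circ\phi)$ and $(X' - X'_0 - (B^*\circ\phi))^2 - (C^*\circ\phi)$.

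The engine behind the transfer principle is the elementary identity
\[
E^{P'}\big[\, Z \circ \phi \mid \mathcal F'_t \,\big] = \big( E^{P^*}[\, Z \mid \mathcal F^*_t \,] \big)\circ\phi \qquad P'\text{-a.s.}
\]
valid for every bounded $\mathcal F^*$-measurable $Z$: the right-hand side is $\phi^{-1}(\mathcal F^*_t) = \mathcal F'_t$-measurable, and for each $A^* \in \mathcal F^*_t$ both sides integrate to $\int_{A^*} Z\, dP^*$ over $\phi^{-1}(A^*)$ (using $P^* = P'\circ\phi^{-1}$); since $\mathcal F'_t = \{\phi^{-1}(A^*) \colon A^* \in \mathcal F^*_t\}$, the identity follows. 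It gives at once that a \emph{true} martingale transfers in both directions: from $E^{P^*}[M^*_t \mid \mathcal F^*_s] = M^*_s$ one gets $E^{P'}[M^*_t\circ\phi \mid \mathcal F'_s] = M^*_s\circ\phi$, and conversely the identity turns the $P'$-martingale property of $M^*\circ\phi$ into $(E^{P^*}[M^*_t\mid\mathcal F^*_s])\circ\phi = M^*_s\circ\phi$ $P'$-a.s., whence $E^{P^*}[M^*_t\mid\mathcal F^*_s] = M^*_s$ holds $P^*$-a.s. (as $P^* = P'\circ\phi^{-1}$); integrability transfers because $E^{P'}|M^*_t\circ\phi| = E^{P^*}|M^*_t|$.

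The only delicate point — and the step I expect to require care — is \emph{localization}, since an $\F'$-localizing sequence for $N^*\circ\phi$ need not factor through $\phi$. I would bypass this by localizing \emph{intrinsically}, exploiting continuity of $N^*$: set $\sigma_k := \inf\{t \geq 0 \colon |N^*_t| \geq k\}$. This is an $\F^*$-stopping time, since $\{\sigma_k \leq t\} = \{\sup_{s\leq t}|N^*_s| \geq k\} \in \mathcal F^*_t$, and $\sigma_k \uparrow \infty$ along every continuous path; consequently $\sigma_k\circ\phi$ is an $\F'$-stopping time (as $\{\sigma_k\circ\phi\leq t\} = \phi^{-1}\{\sigma_k\leq t\} \in \mathcal F'_t$) with $\sigma_k\circ\phi\uparrow\infty$ $P'$-a.s., and $(N^*\circ\phi)^{\sigma_k\circ\phi} = N^{*,\sigma_k}\circ\phi$. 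If $N^*$ is a local $P^*$-$\F^*$-martingale, then $N^{*,\sigma_k}$ is a local martingale bounded by $k$, hence a true $P^*$-$\F^*$-martingale, so $N^{*,\sigma_k}\circ\phi$ is a true $P'$-$\F'$-martingale by the previous paragraph, and letting $k\to\infty$ shows $N^*\circ\phi$ is a local $P'$-$\F'$-martingale. Conversely, if $N^*\circ\phi$ is a local $P'$-$\F'$-martingale, then $(N^*\circ\phi)^{\sigma_k\circ\phi} = N^{*,\sigma_k}\circ\phi$ is a local $P'$-$\F'$-martingale bounded by $k$, hence a true $P'$-$\F'$-martingale, so $N^{*,\sigma_k}$ is a true $P^*$-$\F^*$-martingale, and $k\to\infty$ yields that $N^*$ is a local $P^*$-$\F^*$-martingale. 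This proves the transfer principle, and with it the proposition. (Alternatively, one may simply invoke \cite[Lemma~2.9(a)]{jacod80} for general semimartingales and note that $X^*$ is continuous if and only if $X^*\circ\phi$ is, so that the jump characteristic vanishes on both sides; the argument above is the self-contained version of that reduction in the continuous case.)
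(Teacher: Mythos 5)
The paper does not prove this lemma at all: it simply states that the lemma ``is a restatement of \cite[Lemma 2.9 (a)]{jacod80} for a path-continuous setting'' and moves on. Your proof is therefore not a rederivation of the paper's argument but a self-contained substitute for the citation, and indeed your closing parenthetical correctly identifies the paper's actual route. The argument you give is sound: reducing via the martingale characterization of continuous characteristics to a transfer principle for local martingales along $\phi$; proving the true-martingale transfer from the conditional-expectation identity forced by $\phi^{-1}(\mathcal F^*_t)=\mathcal F'_t$ and $P^*=P'\circ\phi^{-1}$; and crucially handling localization by stopping intrinsically at the hitting times $\sigma_k=\inf\{t\colon |N^*_t|\geq k\}$, which are $\F^*$-stopping times because $N^*$ is continuous and adapted, compose with $\phi$ to $\F'$-stopping times, make the stopped processes uniformly bounded (hence true martingales both ways), and increase to $\infty$ pathwise. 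This is exactly the point one must get right, since a localizing sequence on $\Omega'$ need not factor through $\phi$, and your choice neatly sidesteps that. The only implicit hypothesis worth making explicit is that in the ``if'' direction of the characteristics claim one already assumes $X^*, B^*, C^*$ to be $\F^*$-adapted with the appropriate path regularity (so that $N^*$ is a legitimate candidate on $\B^*$); this is the standard reading of the lemma and matches its use in the paper, where $X^*$ is the coordinate process. In short: correct proof, genuinely different presentation (a direct elementary argument in place of a citation), with no gaps of substance.
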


For \(t \in \mathbb{R}_+\), we define \(\gamma_t \colon \Omega \to \Omega\) by \(\gamma_t (\omega) := \omega ( (\cdot - t)^+ ) \) for \(\omega \in \Omega\).
Moreover, for \(P \in \mathfrak{P}(\Omega)\) and \(t \in \mathbb{R}_+\), we set \[P_t := P \circ \theta_t^{-1}, \qquad P^t := P \circ \gamma_t^{-1}.\]

\begin{lemma} \label{lem: p^t cont}
    The maps \((t, P) \mapsto P_t\) and \((t, P) \mapsto P^t\) are continuous.
\end{lemma}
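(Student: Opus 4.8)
The plan is to exploit that both maps have the form \(P \mapsto P\circ\Gamma_t^{-1}\) for a one-parameter family of transformations \(\Gamma_t\in\{\theta_t,\gamma_t\}\) of \(\Omega\), and to reduce the asserted continuity to the \emph{joint} continuity of the two maps \(\bR_+\times\Omega\to\Omega\), \((t,\omega)\mapsto\theta_t(\omega)\) and \((t,\omega)\mapsto\gamma_t(\omega)\). Since \(\Omega\) is Polish, \(\mathfrak{P}(\Omega)\) is metrizable, so it suffices to verify sequential continuity: given \(t_n\to t\) in \(\bR_+\) and \(P^n\to P\) in \(\mathfrak{P}(\Omega)\), we must show \(P^n\circ\theta_{t_n}^{-1}\to P\circ\theta_t^{-1}\) and \(P^n\circ\gamma_{t_n}^{-1}\to P\circ\gamma_t^{-1}\).

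First I would establish the pointwise statement: if \(t_n\to t\) in \(\bR_+\) and \(\omega_n\to\omega\) locally uniformly, then \(\theta_{t_n}(\omega_n)\to\theta_t(\omega)\) and \(\gamma_{t_n}(\omega_n)\to\gamma_t(\omega)\) locally uniformly. Fix \(N\in\N\) and put \(N' := N + \sup_k t_k<\infty\). For the shift, for \(s\in[0,N]\) write
\[
|\omega_n(s+t_n)-\omega(s+t)| \le |\omega_n(s+t_n)-\omega(s+t_n)| + |\omega(s+t_n)-\omega(s+t)|;
\]
the first term is bounded by \(\sup_{u\in[0,N']}|\omega_n(u)-\omega(u)|\to 0\), and the second by the modulus of continuity of \(\omega\) on \([0,N']\) evaluated at \(|t_n-t|\to0\), so \(\sup_{s\in[0,N]}|\theta_{t_n}(\omega_n)(s)-\theta_t(\omega)(s)|\to0\). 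For \(\gamma\), note that \((s-t_n)^+\in[0,N]\) and \(|(s-t_n)^+-(s-t)^+|\le|t_n-t|\), and apply the same splitting with the modulus of continuity of \(\omega\) on \([0,N]\).

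Finally I would conclude via the Skorokhod coupling theorem: since \(P^n\to P\) weakly on the Polish space \(\Omega\), there exist \(\Omega\)-valued random variables \(Z^n\sim P^n\) and \(Z\sim P\) on a common probability space with \(Z^n\to Z\) almost surely in the local uniform topology. By the previous step, \(\theta_{t_n}(Z^n)\to\theta_t(Z)\) and \(\gamma_{t_n}(Z^n)\to\gamma_t(Z)\) almost surely, whence \(P^n\circ\theta_{t_n}^{-1}=\operatorname{law}(\theta_{t_n}(Z^n))\to\operatorname{law}(\theta_t(Z))=P\circ\theta_t^{-1}\) and analogously \(P^n\circ\gamma_{t_n}^{-1}\to P\circ\gamma_t^{-1}\); recalling \(P_t=P\circ\theta_t^{-1}\) and \(P^t=P\circ\gamma_t^{-1}\) finishes the proof. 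The only mild subtlety is that the limit is taken simultaneously in \(t\) and in \(\omega\) (resp.\ \(P\)); this is exactly what the triangle-inequality estimate handles, using boundedness of the sequence \((t_n)\) and uniform continuity of the fixed limit path \(\omega\) on compacts. Alternatively, one can bypass Skorokhod by observing that \(\delta_{t_n}\otimes P^n\to\delta_t\otimes P\) weakly on \(\bR_+\times\Omega\) and applying the continuous mapping theorem to the jointly continuous maps above.
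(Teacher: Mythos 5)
Your proof is correct and follows the paper's strategy exactly: establish joint continuity of \((t,\omega)\mapsto\theta_t(\omega)\) and \((t,\omega)\mapsto\gamma_t(\omega)\), then transfer this to the pushforward measures. The paper simply cites the Arzel\`a--Ascoli theorem for the first step (where you do the modulus-of-continuity estimate by hand, using only uniform continuity of the fixed limit path on compacts, which is sufficient and arguably cleaner) and a result of Bogachev (Theorem 8.10.61) for the second step (where you instead invoke Skorokhod's representation theorem, or alternatively the continuous-mapping argument with \(\delta_{t_n}\otimes P^n\rightarrow\delta_t\otimes P\), which amounts to the same thing).
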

\begin{proof}
Notice that \((t, \omega) \mapsto \theta_t (\omega)\) and \((t, \omega) \mapsto \gamma_t (\omega)\) are continuous by the Arzel\`a--Ascoli theorem.
  Now, the claim follows from \cite[Theorem 8.10.61]{bogachev}. 
\end{proof}

\begin{lemma} \label{lem: implication c^*}
For every \((t, \omega) \in \of 0, \infty\of\), \(P \in \cC(t, \omega)\) implies \(P_t \in \cK (0, \omega(t))\).
\end{lemma}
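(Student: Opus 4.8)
**Proof plan for Lemma \ref{lem: implication c^*}.**

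The goal is to show that shifting a measure $P \in \cC(t,\omega)$ by the time shift $\theta_t$ produces a measure that starts deterministically at the point $\omega(t)$ and whose differential characteristics still lie in the (Markovian) set $\Theta$. The plan is to unwind the two defining conditions of $\cC$ and translate each of them through the shift operator, using the Markovian structure of $\Theta$ in an essential way.

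First I would check the initial condition. Since $P \in \cC(t,\omega)$ satisfies $P(X^t = \omega^t) = 1$, in particular $P(X_t = \omega(t)) = 1$, so under $P_t = P \circ \theta_t^{-1}$ the coordinate process at time $0$ equals $\omega(t)$ $P_t$-a.s.; that is, $P_t \circ X_0^{-1} = \delta_{\omega(t)}$. Next, I would treat the semimartingale / absolute continuity part. By definition of $\cC(t,\omega)$, the shifted process $X_{\cdot + t}$ is (under $P$) a semimartingale for its natural right-continuous filtration, with characteristics $(B^P_{\cdot+t}, C^P_{\cdot+t})$ that are $P$-a.s. absolutely continuous with respect to $\llambda$. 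Pushing forward by $\theta_t$ identifies $X_{\cdot+t}$ under $P$ with $X$ under $P_t$, so $X$ is a semimartingale under $P_t$ with characteristics $(B^{P_t}, C^{P_t})$ that are $P_t$-a.s. absolutely continuous; hence $P_t \in \fPas = \fPas(0)$. The transfer of the semimartingale property and its characteristics along the measurable map $\theta_t$ is exactly the content of Lemma \ref{lem: jacod restatements} (or a direct Stricker-type argument), applied with $\phi = \theta_t$ and the appropriate (right-continuous) filtrations.

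Then I would match the densities of the characteristics. We have $(\llambda \otimes P)$-a.e. $(dB^P_{\cdot+t}/d\llambda,\, dC^P_{\cdot+t}/d\llambda) \in \Theta(\cdot + t, X)$. Under the correspondence $\theta_t$, the density $dB^P_{\cdot+t}/d\llambda$ at time $s$ corresponds to $dB^{P_t}/d\llambda$ at time $s$, so $(\llambda \otimes P_t)$-a.e. we get $(dB^{P_t}/d\llambda,\, dC^{P_t}/d\llambda)(s) \in \Theta(s + t, \theta_{-t}X)$ in the pushed-forward coordinates; here is where the Markovian structure enters decisively. Because $\Theta(s+t,\cdot)$ depends only on the value of the path at time $s+t$, and $(\theta_t \omega')(s) = \omega'(s+t)$, we have $\Theta(s+t, \omega') = \Theta(s, \theta_t \omega')$ evaluated at the point $\omega'(s+t) = (\theta_t\omega')(s)$; more precisely, after the pushforward the relevant set is $\Theta(s, X)$. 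Since $\cK(0, \omega(t)) = \cC(0, \overline{\omega(t)}) = \cR(\omega(t))$ by definition, and $\cR(x)$ imposes exactly $P_t \circ X_0^{-1} = \delta_x$ together with $(\llambda \otimes P_t)$-a.e. $(dB^{P_t}/d\llambda, dC^{P_t}/d\llambda) \in \Theta$, combining the three points above gives $P_t \in \cR(\omega(t)) = \cK(0, \omega(t))$.

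The main obstacle is the bookkeeping around filtrations and the precise invocation of Lemma \ref{lem: jacod restatements}: one must verify that $\theta_t^{-1}$ of the canonical right-continuous filtration is the natural right-continuous filtration of the shifted process, so that "semimartingale after $t$ with a.c. characteristics" transfers to "semimartingale (from time $0$) with a.c. characteristics" under $P_t$. The identity $\Theta(\cdot + t, \omega \otimes_t X) = \Theta(\cdot + t, X)$ $(\llambda \otimes P)$-a.e., already recorded in the remark following Standing Assumption \ref{SA: non empty}, removes the only other subtlety, namely the difference between the two equivalent descriptions of $\cC(t,\omega)$. Everything else is a routine change-of-variables computation along the continuous map $\theta_t$.
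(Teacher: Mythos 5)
Your proposal is correct and follows the same route as the paper: push forward along $\theta_t$, invoke Lemma \ref{lem: jacod restatements} to transfer the semimartingale property and the characteristics, and use the Markovian structure of $\Theta$ (i.e.\ $\Theta(s+t,\omega)=\Theta(s,\theta_t\omega)$) to conclude the densities lie in $\Theta$. The paper's proof is a one-liner that leaves all of this implicit; you spell out the same steps in more detail, which is harmless.
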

\begin{proof}
Let \((t, \omega) \in \of 0, \infty\of\) and take \(P \in \cC(t, \omega)\). Obviously, \( P_t \circ X_0^{-1} = \delta_{\omega(t)} \) and, thanks to
Lemma~\ref{lem: jacod restatements}, we also get \(P_t \in \fPas \) and
\(
(\llambda \otimes P_t)\text{-a.e. } (dB^{P_t} /d\llambda, dC^{P_t}/d\llambda) \in \Theta(X),
\)
which proves \( P_t \in \cK(0,\omega(t)) \).
\end{proof}

\begin{lemma} \label{lem: p^t}
For every \( (t, x) \in \bR_+ \times \bR\), we have
\( P \in \cK(0,x) \) if and only if \( P^t \in \cK(t,x) \).
\end{lemma}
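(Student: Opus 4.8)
The plan rests on two elementary identities for the path operators $\theta_t$ and $\gamma_t$. Since $\theta_t(\gamma_t(\omega))(s) = \gamma_t(\omega)(s+t) = \omega(s)$ for all $s \geq 0$, we have $\theta_t \circ \gamma_t = \operatorname{id}_\Omega$, hence $(P^t)_t = P \circ (\theta_t \circ \gamma_t)^{-1} = P$ for every $P \in \mathfrak{P}(\Omega)$. Moreover $\gamma_t(\omega)(s) = \omega(0)$ for $s \in [0,t]$, so that $P^t(X^t = \bx^t) = P(X_0 = x)$; in other words, recalling $\cK(t,x) = \cC(t,\bx)$ and $\bx(t) = x$, the frozen-initial-segment requirement in the definition of $\cC(t,\bx)$ is for $P^t$ equivalent to $P \circ X_0^{-1} = \delta_x$. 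Finally, since $\Theta$ has Markovian structure, $\Theta(s+t,\omega) = \Theta(s,\theta_t\omega)$ for all $s,t \geq 0$ and $\omega \in \Omega$.

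For the direction ``$P^t \in \cK(t,x) \Rightarrow P \in \cK(0,x)$'' I would simply quote Lemma \ref{lem: implication c^*}: applied with the constant path $\bx$ in place of $\omega$, it gives that $P^t \in \cC(t,\bx)$ implies $(P^t)_t \in \cK(0,\bx(t)) = \cK(0,x)$, and $(P^t)_t = P$ by the first identity above.

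For the converse ``$P \in \cK(0,x) \Rightarrow P^t \in \cK(t,x)$'' I would mirror the proof of Lemma \ref{lem: implication c^*}. Starting from $P \in \cR(x) = \cC(0,\bx) \subset \fPas$, I apply Lemma \ref{lem: jacod restatements} with $\phi := \theta_t$, taking $\B^* := (\Omega, \cF, \F_+, P)$ and $\B' := (\Omega, \cF, \F', P^t)$, where $\F' = (\cF'_r)_{r \geq 0}$ with $\cF'_r := \bigcap_{s > r} \sigma(X_{u+t} : 0 \leq u \leq s)$ is the right-continuous filtration generated by the shifted process $X_{\cdot + t}$. The hypotheses of Lemma \ref{lem: jacod restatements} hold because $\theta_t$ is surjective, $\theta_t^{-1}(\cF_{r+}) = \cF'_r$, and $P^t \circ \theta_t^{-1} = (P^t)_t = P$. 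Since $X$ is a $P$-semimartingale with characteristics $(B^P, C^P)$, the lemma yields that $X_{\cdot + t}$ is a $P^t$-semimartingale for its natural right-continuous filtration with characteristics $(B^P \circ \theta_t, C^P \circ \theta_t)$; these are $P^t$-a.s.\ absolutely continuous since $P^t \circ \theta_t^{-1} = P$ and $B^P, C^P$ are $P$-a.s.\ absolutely continuous. Thus $P^t \in \fPas(t)$. For the differential inclusion, note that the densities satisfy $(dB^{P^t}_{\cdot + t}/d\llambda, dC^{P^t}_{\cdot + t}/d\llambda)(s,\omega) = (dB^P/d\llambda, dC^P/d\llambda)(s,\theta_t\omega) \in \Theta(s,\theta_t\omega) = \Theta(s+t,\omega)$ for $(s,\omega)$ outside $(\operatorname{id}\times\theta_t)^{-1}(G)$, where $G$ is the $(\llambda \otimes P)$-null set on which the $P$-densities fail to lie in $\Theta$; since $(\llambda \otimes P^t) \circ (\operatorname{id}\times\theta_t)^{-1} = \llambda \otimes P$, the set $(\operatorname{id}\times\theta_t)^{-1}(G)$ is $(\llambda \otimes P^t)$-null, so the inclusion holds $(\llambda \otimes P^t)$-a.e. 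Combined with $P^t(X^t = \bx^t) = P(X_0 = x) = 1$ this gives $P^t \in \cC(t,\bx) = \cK(t,x)$.

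The step I expect to require the most care — exactly as in Lemma \ref{lem: implication c^*} — is the verification of the compatibility conditions of Lemma \ref{lem: jacod restatements}, in particular identifying the natural right-continuous filtration of $X_{\cdot + t}$ with $\theta_t^{-1}$ of the right-continuous canonical filtration and transferring the semimartingale characteristics accordingly; once this is in place, the remaining assertions reduce to bookkeeping of push-forwards under $\theta_t$ and $\operatorname{id} \times \theta_t$ for the various ``$(\llambda \otimes P)$-a.e.'' statements.
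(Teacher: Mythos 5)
Your proposal is correct and follows essentially the same route as the paper: both directions reduce to Lemma~\ref{lem: jacod restatements} via the identity $\theta_t\circ\gamma_t=\operatorname{id}$, together with the Markovian identity $\Theta(s+t,\omega)=\Theta(s,\theta_t\omega)$ and the observation about the frozen initial segment. The only (harmless) difference is that for the direction $P^t\in\cK(t,x)\Rightarrow P\in\cK(0,x)$ you invoke the already-proved Lemma~\ref{lem: implication c^*} with $\omega=\bx$, whereas the paper applies Lemma~\ref{lem: jacod restatements} once more directly; since Lemma~\ref{lem: implication c^*} itself rests on Lemma~\ref{lem: jacod restatements}, the two arguments coincide in substance.
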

\begin{proof}
Let \(x \in \bR\) and \( P \in \cK(0,x) \). As 
\(
\gamma_t^{-1} (\{ X = x \text{ on } [0,t] \} ) = \{ X_0 = x \},
\)
we have \( P^t( X = x \text{ on } [0,t]) = 1\).
Next, it follows from
Lemma \ref{lem: jacod restatements} that \(P^t \in \fPas (t) \) and
\[
(\llambda \otimes P^t)\text{-a.e. } (dB^{P^t}_{\cdot + t} /d\llambda, dC^{P^t}_{\cdot + t}/d\llambda) \in  \Theta (X_{\cdot + t}),
\]
which proves \( P^t \in \cK(t,x) \).
Conversely, take \(P^t \in \cK(t,x) \). Due to the identity \( \theta_t \circ \gamma_t = \on{id} \), we have
\( P =  (P^t)_t \). Thus, \( P \circ X_0 = \delta_x \), and applying Lemma \ref{lem: jacod restatements} once more, we conclude that~\(P \in \cK(0,x) \). The proof is complete.
\end{proof}

\begin{lemma} \label{lem: k idenity}
    For all \((t, x) \in \bR_+ \times \bR\), we have \(\cK (t, x) = \{P^t \colon P \in \cK(0, x)\}\).
\end{lemma}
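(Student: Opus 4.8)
The plan is to prove the set equality $\cK(t,x) = \{P^t \colon P \in \cK(0,x)\}$ by a direct double inclusion, leaning entirely on the two preceding lemmas which already package the required characteristic computations.

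First I would recall that, by definition, $\cK(t,x) = \cC(t,\bx)$ where $\bx$ is the constant path equal to $x$; in particular $\bx(t) = x$ and $\bx^t = \bx$. For the inclusion ``$\supseteq$'': take any $P \in \cK(0,x)$. By Lemma \ref{lem: p^t}, $P^t \in \cK(t,x)$, which is exactly what we need.

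For the inclusion ``$\subseteq$'': take any $Q \in \cK(t,x)$. We want to exhibit $P \in \cK(0,x)$ with $Q = P^t$. The natural candidate is $P := Q_t = Q \circ \theta_t^{-1}$. Since $Q \in \cK(t,x) = \cC(t,\bx)$ and $\bx(t) = x$, Lemma \ref{lem: implication c^*} gives $P = Q_t \in \cK(0, \bx(t)) = \cK(0,x)$. It then remains to check that $P^t = Q$, i.e.\ that $(Q_t)^t = Q$. Here I would use that $Q$ is supported on paths that are constant equal to $x$ on $[0,t]$ — which holds because $Q \in \cC(t,\bx)$ forces $Q(X^t = \bx^t) = 1$, and $\bx^t$ is the constant path $x$ on $[0,t]$. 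On such paths the composition $\gamma_t \circ \theta_t$ acts as the identity, since $\gamma_t$ re-inserts a constant segment on $[0,t]$ matching the starting value $\omega(0) = x$ of $\theta_t(\omega)$, which reconstructs $\omega$ exactly when $\omega$ was already constant $= x$ on $[0,t]$. Hence $(Q_t)^t = Q \circ (\gamma_t \circ \theta_t)^{-1} = Q$, completing this inclusion.

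The only mildly delicate point — and the step I would be most careful about — is the verification that $(Q_t)^t = Q$ rather than merely $(P^t)_t = P$ (the latter is the trivial identity $\theta_t \circ \gamma_t = \on{id}$ already used in Lemma \ref{lem: p^t}). The identity $\gamma_t \circ \theta_t = \on{id}$ is \emph{false} on all of $\Omega$, so one genuinely needs the support restriction $Q(X = x \text{ on } [0,t]) = 1$ coming from $\cC(t,\bx)$; on the complement of this $Q$-null set the argument would fail. Once this is observed the proof is a couple of lines, so I would keep it short.

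\begin{proof}
Recall $\cK(t,x) = \cC(t,\bx)$ with $\bx$ the constant path at $x$, so $\bx(t) = x$ and $\bx^t$ is the path constantly equal to $x$ on $[0,t]$.

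``$\supseteq$'': If $P \in \cK(0,x)$, then $P^t \in \cK(t,x)$ by Lemma \ref{lem: p^t}.

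``$\subseteq$'': Let $Q \in \cK(t,x) = \cC(t,\bx)$. By Lemma \ref{lem: implication c^*} (applied with $\omega = \bx$), $P := Q_t \in \cK(0,\bx(t)) = \cK(0,x)$. It remains to show $P^t = Q$, i.e.\ $Q \circ (\gamma_t \circ \theta_t)^{-1} = Q$. Since $Q \in \cC(t,\bx)$ we have $Q(X^t = \bx^t) = 1$, i.e.\ $Q$-a.s.\ $\omega(s) = x$ for all $s \in [0,t]$. For any such $\omega$, the path $\theta_t(\omega) = \omega(\cdot + t)$ has initial value $\omega(t) = x$, and therefore $\gamma_t(\theta_t(\omega)) = \theta_t(\omega)((\cdot - t)^+)$ equals $x$ on $[0,t]$ and equals $\omega$ on $[t,\infty)$; as $\omega$ is itself $x$ on $[0,t]$, this gives $\gamma_t(\theta_t(\omega)) = \omega$. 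Hence $\gamma_t \circ \theta_t = \on{id}$ $Q$-a.s., so $P^t = Q \circ (\gamma_t \circ \theta_t)^{-1} = Q$. Thus $Q = P^t$ with $P \in \cK(0,x)$, proving the claim.
\end{proof}
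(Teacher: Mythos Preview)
Your proof is correct and follows essentially the same route as the paper: both directions use Lemmata \ref{lem: p^t} and \ref{lem: implication c^*} in the same way, and for the inclusion ``$\subseteq$'' both reduce to showing $(Q_t)^t = Q$ via the support condition $Q(X = x \text{ on } [0,t]) = 1$. Your write-up is simply more explicit about why $\gamma_t \circ \theta_t = \on{id}$ holds $Q$-a.s., which the paper states without the pointwise verification.
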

\begin{proof}
Take \((t, x) \in \bR_+ \times \bR\). Lemma \ref{lem: p^t} yields the inclusion \(\{P^t \colon P \in \cK(0, x)\} \subset \cK(t, x)\). Conversely, take \(P \in \cK(t, x)\). As 
\( P( X = x \text{ on } [0,t]) = 1 \), the equality
\( (P_t)^t  = P \) holds. Now, Lemma~\ref{lem: implication c^*} yields that \(P_t \in \cK(0, x)\) and hence, we get the inclusion \(\cK(t, x) \subset \{P^t \colon P \in \cK(0, x)\}\). 
\end{proof}

\begin{proposition} \label{prop: K upper hemi and compact}
The correpondence \((t, x) \mapsto \cK(t, x)\) is upper hemicontinuous with nonempty and compact values.
\end{proposition}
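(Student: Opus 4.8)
The plan is to reduce everything to the already-established regularity of the correspondence $\cR$, together with the structural identity $\cK(t,x) = \{P^t \colon P \in \cK(0,x)\}$ from Lemma~\ref{lem: k idenity} and the continuity of $(t,P) \mapsto P^t$ from Lemma~\ref{lem: p^t cont}. Recall $\cK(0,x) = \cC(0,\bx) = \cR(x)$, so the correspondence $x \mapsto \cK(0,x)$ is precisely $\cR$, which by Theorem~\ref{thm: main r} (i.e. Propositions~\ref{prop: compactness} and~\ref{prop: upper hemicontinuous}) is upper hemicontinuous with compact values.

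First I would treat the compactness of the values: for fixed $(t,x)$, the set $\cK(t,x)$ is the image of the compact set $\cR(x)$ under the continuous map $P \mapsto P^t$, hence compact; in particular it is non-empty by Standing Assumption~\ref{SA: non empty} (or by $\cR(x) \neq \emptyset$). Next, for upper hemicontinuity I would use the sequential characterization via the closed-graph criterion for compact-valued correspondences \cite[Theorem 17.20]{hitchi}: it suffices to take sequences $(t_n, x_n) \to (t,x)$ and $P_n \in \cK(t_n, x_n)$ with $P_n \to P$ weakly, and show $P \in \cK(t,x)$ — but to invoke that theorem one first needs the values to sit inside a fixed compact set locally, which follows because $\{x_n\}$ lies in a compact $K$, so by Proposition~\ref{prop: compactness} all the measures $P_n^{\,(\cdot)}$ obtained by un-shifting lie in a compact set, and continuity of $(s,Q) \mapsto Q^s$ keeps the shifted versions in a compact set too. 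Given such sequences, write $P_n = Q_n^{\,t_n}$ with $Q_n \in \cR(x_n)$ (using Lemma~\ref{lem: k idenity}); since the $Q_n$ lie in the compact set $\cR^\circ$ of Proposition~\ref{prop: compactness} (with $K \supset \{x_n\} \cup \{x\}$), pass to a subsequence $Q_n \to Q$ weakly. By upper hemicontinuity of $\cR$ and closedness of its values, $Q \in \cR(x)$. Finally, by Lemma~\ref{lem: p^t cont}, $P_n = Q_n^{\,t_n} \to Q^t$ weakly, so $P = Q^t \in \cK(t,x)$ by Lemma~\ref{lem: k idenity}. This gives the closed graph, and together with the local compactness of values, upper hemicontinuity follows.

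The main obstacle I anticipate is the bookkeeping around the time-variable $t_n \to t$: the earlier results on $\cR$ only handle the spatial variable, so the entire role of the time coordinate must be absorbed through the continuity of the shift map $(s,Q)\mapsto Q^s$ and the compactness must be arranged so that the un-shifted sequence $(Q_n)$ genuinely lives in one fixed compact set independent of $n$ — this is exactly what Proposition~\ref{prop: compactness} delivers once one notes $\{x_n : n \in \N\} \cup \{x\}$ is compact. Everything else is a routine assembly of Lemmata~\ref{lem: p^t cont}, \ref{lem: p^t}, \ref{lem: k idenity} and Theorem~\ref{thm: main r}; no new analytic input (no martingale-problem arguments, no tightness beyond what Proposition~\ref{prop: compactness} already provides) should be needed. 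Note also that Proposition~\ref{prop: upper hemicontinuous} was stated as a consequence of the present proposition, so in writing this up I would be careful to prove $\cK$ directly from Theorem~\ref{thm: main r}'s ingredients (Propositions~\ref{prop: closedness} and~\ref{prop: compactness} plus an independent upper-hemicontinuity argument for $\cR$) rather than circularly; concretely, the upper hemicontinuity of $\cR$ needed above can itself be extracted from the closed-graph argument applied with $t_n \equiv t = 0$, so the honest logical order is: establish the closed graph of $\cK$ as above (which in the special case $t=0$ gives upper hemicontinuity of $\cR$), then conclude.
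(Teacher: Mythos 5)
Your proposal is correct and follows essentially the same route as the paper's proof: both reduce the time variable via the shift identity of Lemma~\ref{lem: k idenity} and the continuity of the shift maps (Lemma~\ref{lem: p^t cont}), invoke Proposition~\ref{prop: compactness} to confine all the un-shifted measures to a fixed compact set so that a convergent subsequence exists, and then use Proposition~\ref{prop: closedness} together with Lemmata~\ref{lem: implication c^*} and \ref{lem: p^t} to keep the limit inside $\cK(t,x)$; the paper's logical order is also to prove UHC of $\cK$ first and then obtain UHC of $\cR$ as the special case $t\equiv 0$, exactly as you anticipate. The only cosmetic difference is that the paper works with closedness of the lower inverse $\cK^l(F)$ and verifies $P(X=x$ on $[0,t])=1$ via Portmanteau to get $P=(P_t)^t$, while you extract the convergent subsequence from $Q_n=(P_n)_{t_n}$ and identify $P=Q^t$ by uniqueness of weak limits.
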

\begin{proof}
As \( x \mapsto \cK(0, x) \) has nonempty compact values by Proposition \ref{prop: compactness} and Standing Assumption~\ref{SA: non empty}, Lemmata~\ref{lem: p^t cont} and \ref{lem: k idenity} yield that the same is true for \((t, x) \mapsto \cK (t, x)\).

It remains to show that \(\cK\) is upper hemicontinuous. 
Let \( F \subset \mathfrak{P}(\Omega) \) be closed. We need to show that
\( \cK^l(F) = \{ (t, x) \in \bR_+ \times \bR \colon \cK(t, x) \cap F \neq \emptyset \} \) is closed.
Suppose that the sequence \( (t^n, x^n)_{n \in \mathbb{N}} \subset \cK^l(F) \) converges to \((t, x) \in \bR_+ \times \bR\).
For each \(n \in \mathbb{N} \), there exists a probability measure \( P^n \in \cK (t^n, x^n) \cap F \).
Thanks to Proposition \ref{prop: compactness}, the set
\begin{align*}
\cR^\circ := \big\{ P \in \fPas \colon P &\circ X_0^{-1} \in \{\delta_{x^n}, \delta_{x} \colon n \in \mathbb{N}\},\ (\llambda \otimes P)\text{-a.e. } (dB^{P} /d\llambda, dC^{P}/d\llambda) \in \Theta(X)  \big\}
\end{align*}
is compact. Hence, by Lemma \ref{lem: p^t cont}, so is the set 
\[
\cK^\circ := \{ P^t \colon (t, P) \in \{t^n, t \colon n \in \mathbb{N}\} \times \cR^\circ \}.
\]
Thus, by virtue of Lemma \ref{lem: k idenity}, we conclude that \(\{P^n \colon n \in \mathbb{N}\} \subset \cK^\circ\) is relatively compact. 
Hence, passing to a subsequence if necessary, we can assume that \( P^n \to P \) weakly for some 
\( P \in \cK^\circ \cap F\).
Notice that, for every \(\varepsilon \in (0, t)\), the set \(\{|X_s - x| \leq \varepsilon \text{ for all } s \in [0, t - \varepsilon]\} \subset \Omega\) is closed. Consequently, by the Portmanteau theorem, for every \(\varepsilon \in (0, t)\), we get
\[
1 = \limsup_{n \to \infty} P^n( |X_s - x| \leq \varepsilon \text{ for all } s \in [0, t - \varepsilon] ) \leq P( |X_s - x| \leq \varepsilon \text{ for all } s \in [0, t - \varepsilon] ).
\]
Consequently, \(P( X = x \text{ on } [0, t] ) = 1\), which implies that \(P = (P_t)^t\).
By Lemmata \ref{lem: p^t cont} and \ref{lem: implication c^*}, we have \((P^n)_{t^n} \in \cK(0, x^n)\) and \((P^n)_{t^n} \to P_t\) weakly. Further, since \(P_t \circ X_0^{-1} = \delta_x\), Proposition~\ref{prop: closedness} yields that \(P_t \in \cK(0, x)\). 
Thus, by Lemma \ref{lem: p^t}, \(P \in \cK^\circ \cap F \cap \cK(t, x) = \cK(t, x) \cap F,\) which implies
\((t, x) \in \cK^l(F) \). We conclude that \(\cK\) is upper hemicontinuous.
\end{proof}

\begin{lemma} \label{lem: r^* compact}
The correspondence \( (t, x) \mapsto \cK(t, x) \) has convex values.
\end{lemma}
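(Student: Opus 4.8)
The plan is to reduce the statement to the convexity of $\cR(x)$ and then to read off everything from Lemma~\ref{lem: JS convex}. First, recall that $\cK(0,x) = \cC(0,\bx) = \cR(x)$, and that, by Lemma~\ref{lem: k idenity}, $\cK(t,x) = \{P^t \colon P \in \cK(0,x)\}$. Since $P \mapsto P^t = P \circ \gamma_t^{-1}$ is the push-forward by the fixed map $\gamma_t$, it is affine on $\mathfrak{P}(\Omega)$, hence maps convex sets to convex sets. Thus it suffices to show that $\cR(x)$ is convex for every $x \in \bR$.

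Fix $x \in \bR$, take $P, Q \in \cR(x)$ and $\alpha \in (0,1)$, and set $R := \alpha P + (1-\alpha) Q$. Clearly $R \circ X_0^{-1} = \delta_x$. Applying Lemma~\ref{lem: JS convex}, we obtain $R \in \fPs$ together with density processes $Z^P, Z^Q$ satisfying \eqref{eq: ZP ZQ convex combi} and, $R$-a.s., $dB^R = \alpha Z^P\, dB^P + (1-\alpha) Z^Q\, dB^Q$ and $dC^R = \alpha Z^P\, dC^P + (1-\alpha) Z^Q\, dC^Q$. Since $P, Q \in \fPas$ have differential characteristics in $\Theta$, I would fix versions of $(B^P,C^P)$ and $(B^Q,C^Q)$ that, for \emph{every} path, are absolutely continuous with densities lying in $\Theta(\cdot, X)$ for $\llambda$-a.e.\ time; this is possible because $\Theta$ is non-empty valued ($F \neq \emptyset$). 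Inserting these into the two displayed identities shows that $B^R$ and $C^R$ are $R$-a.s.\ absolutely continuous and that $(dB^R/d\llambda, dC^R/d\llambda) = \alpha Z^P (dB^P/d\llambda, dC^P/d\llambda) + (1-\alpha) Z^Q (dB^Q/d\llambda, dC^Q/d\llambda)$ holds $(\llambda \otimes R)$-a.e.; in particular $R \in \fPas$. By \eqref{eq: ZP ZQ convex combi} the coefficients $\alpha Z^P$ and $(1-\alpha) Z^Q$ are non-negative and sum to $1$, so the right-hand side is a genuine convex combination of two elements of $\Theta(\cdot, X)$. Since $\Theta$ is convex valued by Condition~\ref{cond: convexity} (recall $\Theta(t,\omega) = \{(b(f,\omega(t)), a(f,\omega(t))) \colon f \in F\}$), this gives $(dB^R/d\llambda, dC^R/d\llambda) \in \Theta(\cdot, X)$ $(\llambda \otimes R)$-a.e., i.e.\ $R \in \cR(x)$.

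The only genuinely delicate point is the bookkeeping around the qualifier ``$R$-a.s.''. Here I would use that $R$ is equivalent to $\tfrac12 (P+Q)$: indeed $R \ll \tfrac12(P+Q)$ trivially, while $\tfrac12(P+Q) \ll R$ because $P \ll R$ and $Q \ll R$. Hence a property holds $R$-a.s.\ if and only if it holds both $P$-a.s.\ and $Q$-a.s., and likewise $(\llambda\otimes R)$-a.e.\ if and only if it holds $(\llambda\otimes P)$-a.e.\ and $(\llambda\otimes Q)$-a.e. Combined with the everywhere-defined choice of the characteristics above, this legitimises the inclusions used in the previous paragraph. I expect this measure-theoretic bookkeeping, rather than any substantial new idea, to be the main obstacle; the structural content is carried entirely by Lemma~\ref{lem: JS convex} and the convexity assumption on $\Theta$.
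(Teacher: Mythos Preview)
Your overall strategy coincides with the paper's: reduce to \(\cK(0,x)=\cR(x)\) via Lemma~\ref{lem: k idenity}, apply Lemma~\ref{lem: JS convex}, and invoke Condition~\ref{cond: convexity}. The gap is in the null-set bookkeeping, and your proposed fix does not close it.

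You redefine \((B^P,C^P)\) on a \(P\)-null set \(N_P\) and \((B^Q,C^Q)\) on a \(Q\)-null set \(N_Q\) so that the densities lie in \(\Theta\) for every path. But the identity \(dB^R=\alpha Z^P\,dB^P+(1-\alpha)Z^Q\,dB^Q\) from Lemma~\ref{lem: JS convex} is only asserted \(R\)-a.s.\ for the \emph{original} versions; you must argue that it survives your modification. Your equivalence observation (``\(R\)-a.s.\ iff \(P\)-a.s.\ and \(Q\)-a.s.'') does not do this: to check the formula \(P\)-a.s.\ with the new versions you would need the \emph{new} \(B^Q\) to agree with the old one \(P\)-a.s., whereas you only know agreement \(Q\)-a.s.\ (and \(N_Q\) need not be \(P\)-null).

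The paper handles this point differently, without redefining anything. Using that \(Z^P_t=dP|_{\cF_t}/dR|_{\cF_t}\), one has \(\iint g\,Z^P\,d(\llambda\otimes R)=\iint g\,d(\llambda\otimes P)\) for nonnegative adapted \(g\); taking \(g=\1_{\{(b^P,a^P)\notin\Theta,\ Z^P>0\}}\) yields that \((\llambda\otimes R)\)-a.e.\ on \(\{Z^P>0\}\) one has \((b^P,a^P)\in\Theta\), while on \(\{Z^P=0\}\) the term \(\alpha Z^P(b^P,a^P)\) vanishes. Together with the analogous statement for \(Q\) and \(\alpha Z^P+(1-\alpha)Z^Q=1\), a short case split gives \((b^R,a^R)\in\Theta\) \((\llambda\otimes R)\)-a.e. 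Equivalently: your redefinition \emph{does} preserve the formula, but only because \(Z^P=0\) \((\llambda\otimes R)\)-a.e.\ on the \((\llambda\otimes P)\)-null set where you changed \(b^P\) --- and establishing that is precisely the change-of-measure step you omitted.
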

\begin{proof}
By virtue of Lemma \ref{lem: k idenity}, it suffices to prove that \(\cK (0, x)\) is convex for every fixed \(x \in \mathbb{R}\).
Indeed, for every \(P, Q \in \cK(t, x)\) and \(\alpha \in (0, 1)\), there are probability measures \(\oP, \oQ \in \cK(0, x)\) such that \(\oP^t = P\) and \(\oQ^t = Q\). Then, \(\alpha P + (1 - \alpha) Q = (\alpha \oP + (1 - \alpha) \oQ)^t\) and consequently, from Lemma~\ref{lem: p^t}, we get \(\alpha P + (1 - \alpha) Q \in \cK(t, x)\) once \(\alpha \oP + (1 - \alpha) \oQ \in \cK(0, x)\).

We now prove the convexity of \(\cK(0, x)\).
Take \(P, Q \in \cK (0, x)\) and \(\alpha \in (0, 1)\). Furthermore, set \(R := \alpha P + (1 - \alpha) Q\). It is easy to see that
\(
R \circ X^{-1}_0 = \delta_x.
\)
By Lemma~\ref{lem: JS convex}, using also its notation, \(R \in \fPas\) and the Lebesgue densities \((b^R, a^R)\) of the \(R\)-characteristics of the coordinate process are given by 
\[
b^R = \alpha Z^P b^P + (1 - \alpha) Z^Q b^Q, \qquad a^R = \alpha Z^P a^P + (1 - \alpha) Z^Q a^Q.
\]
Since \(P \in \cK (0, x)\), we have 
\[
\iint Z^P \1_{\{(b^P, a^P) \hspace{0.05cm}\not \in \hspace{0.05cm} \Theta(X), \hspace{0.05cm} Z^P \hspace{0.05cm} >\hspace{0.05cm} 0\}} d (\llambda \otimes R) 
= (\llambda \otimes P) ( (b^P, a^P) \not \in \Theta(X), Z^P > 0) = 0.
\]
Thus, \((\llambda \otimes R)\)-a.e. \[\1_{\{ (b^P, a^P) \hspace{0.05cm}\not \in \hspace{0.05cm} \Theta(X), \hspace{0.05cm} Z^P > 0\}} = 0.\] Similarly, we obtain that \((\llambda \otimes R)\)-a.e. \[\1_{\{ (b^Q, a^Q) \hspace{0.05cm}\not \in \hspace{0.05cm} \Theta(X), \hspace{0.05cm} Z^Q > 0\}} = 0.\]
Consequently, recalling that \(\{Z^P = 0, Z^Q = 0\} = \emptyset\), by virtue of \eqref{eq: ZP ZQ convex combi}, and using that \(\Theta\) is convex-valued (Condition~\ref{cond: convexity}), we get 
\begin{align*}
    (\llambda \otimes R) &\big( (b^R, a^R) \not \in \Theta(X) \big) 
    \\&=(\llambda \otimes R) \big( (b^R, a^R) \not \in \Theta(X), (b^P, a^P) \in \Theta(X), Z^P > 0, (b^Q, a^Q) \in \Theta(X), Z^Q > 0 \big)
    %&=(\llambda \otimes R) \big( (\alpha Z^P b^P + (1 - \alpha) Z^Q b^Q, \alpha Z^P a^P + (1 - \alpha) Z^Q a^Q) \not \in \Theta(X), \\&\hspace{3.5cm}  (b^P, a^P) \in \Theta(X), Z^P > 0, (b^Q, a^Q) \in \Theta(X), Z^Q > 0 \big)
    \\& \qquad \qquad + (\llambda \otimes R) \big( (b^P, a^P) \not \in \Theta(X), (b^P, a^P) \in \Theta(X), Z^P > 0, Z^Q = 0 \big) 
    \\& \qquad \qquad + (\llambda \otimes R) \big( (b^Q, b^Q) \not \in \Theta(X), Z^P = 0, (b^Q, a^Q) \in \Theta(X), Z^Q > 0 \big)
    \\& = 0.
\end{align*}
We conclude that \(R \in \cK(0, x)\). The proof is complete.
\end{proof}

\begin{lemma} \label{lem: iwie Markov}
    Let \(Q \in \mathfrak{P}(\Omega)\) and take \(t \in \bR_+\) and \(\omega, \alpha \in \Omega\) such that \(\omega (t) = \alpha (t)\).    Then, 
    \[
    \delta_\alpha \otimes_t Q \in \cC(t, \alpha) \quad \Longleftrightarrow \quad \delta_\omega \otimes_t Q \in \cC (t, \omega).
    \]
\end{lemma}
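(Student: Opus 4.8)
The plan is to show that, for measures of the special form $\delta_\omega \otimes_t Q$, whether such a measure lies in $\cC(t,\omega)$ is governed entirely by the law of the shifted coordinate process $X_{\cdot + t}$, and that this law does not change when $\omega$ is replaced by $\alpha$, precisely because $\omega(t) = \alpha(t)$.

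First I would unwind the pasting operation: since the stopping time is the constant $t$ and the kernel is the constant $Q$, one has $P_\omega := \delta_\omega \otimes_t Q = Q \circ (\omega \otimes_t \cdot)^{-1}$, i.e.\ the image of $Q$ under the map $\omega' \mapsto \omega \otimes_t \omega'$. From the explicit form of $\omega \otimes_t \omega'$ it is immediate that $X^t = \omega^t$ holds $P_\omega$-a.s., so the path condition $P_\omega(X^t = \omega^t) = 1$ in the definition of $\cC(t,\omega)$ is automatically met; likewise $P_\alpha := \delta_\alpha \otimes_t Q$ automatically satisfies $P_\alpha(X^t = \alpha^t) = 1$. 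Moreover, writing $Y$ for the canonical process under $Q$ and $x := \omega(t) = \alpha(t)$, one reads off that under $P_\omega$ the shifted process obeys $\theta_t(X) = \Psi_x(\theta_t(Y))$, where $\Psi_x(\eta) := x + \eta - \eta(0)$, and hence
\[
P_\omega \circ \theta_t^{-1} \;=\; \big(Q \circ \theta_t^{-1}\big) \circ \Psi_x^{-1} \;=\; P_\alpha \circ \theta_t^{-1}.
\]

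Next I would observe that each of the two remaining conditions defining $\cC(t,\omega)$ — namely $P \in \fPas(t)$, and $(\llambda \otimes P)$-a.e.\ $(dB^P_{\cdot+t}/d\llambda,\, dC^P_{\cdot+t}/d\llambda) \in \Theta(\cdot + t, X)$ — depends on $P$ only through $P \circ \theta_t^{-1}$. For the first this is simply the definition of $\fPas(t)$. For the second it uses the Markovian structure of $\Theta$: since $\Theta(s, \cdot)$ depends only on the value at time $s$, the correspondence $\Theta(\cdot + t, X)$ depends on $X$ only through $X_{\cdot + t} = \theta_t(X)$; and the characteristics $(B^P_{\cdot + t}, C^P_{\cdot + t})$ together with their Lebesgue densities are likewise measurable functionals of $X_{\cdot + t}$ determined by $P \circ \theta_t^{-1}$, which can be made precise via Lemma~\ref{lem: jacod restatements}. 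As $P_\omega \circ \theta_t^{-1} = P_\alpha \circ \theta_t^{-1}$ by the previous step, these two conditions hold for $P_\omega$ if and only if they hold for $P_\alpha$; combined with the automatically valid path conditions, this yields $\delta_\omega \otimes_t Q \in \cC(t,\omega) \Longleftrightarrow \delta_\alpha \otimes_t Q \in \cC(t,\alpha)$.

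The only delicate point I anticipate is the claim that membership in $\fPas(t)$ together with the characteristic-inclusion condition is genuinely a property of $P \circ \theta_t^{-1}$ alone — that is, that the semimartingale characteristics of $X_{\cdot + t}$ relative to its own right-continuous filtration, and their Radon--Nikodym densities against $\llambda$, transform correctly under the coordinate change $\theta_t$. This is precisely what the Stricker/Jacod-type Lemma~\ref{lem: jacod restatements} is designed to deliver; everything else is bookkeeping with the definitions.
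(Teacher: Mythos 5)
Your proposal is correct and follows essentially the same route as the paper. The paper reaches the key equality $\oP \circ \theta_t^{-1} = \oQ \circ \theta_t^{-1}$ (your $P_\omega \circ \theta_t^{-1} = P_\alpha \circ \theta_t^{-1}$) a bit more compactly by invoking Lemma~\ref{lem: implication c^*} directly, and then, as you anticipate, transfers the semimartingale and characteristics properties back via Lemma~\ref{lem: jacod restatements}; the Markovian structure of $\Theta$ plays exactly the role you identify.
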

\begin{proof}
Set \(\oQ := \delta_\alpha \otimes_t Q\) and \(\oP := \delta_\omega \otimes_t Q\).
Suppose that \( \oQ \in \cC(t,\alpha) \).
Thanks to Lemma \ref{lem: implication c^*}, we have \(\oQ_t \in \cC(0, \omega (t))\). Since \(\oQ_t = \oP_t\), we also have \(\oP_t \in \cC(0, \omega (t))\). 
Thus, Lemma \ref{lem: jacod restatements} yields that \(\oP \in \fPas (t)\) and \((\oP \otimes \llambda)\)-a.e.
\[
(d B^{\oP}_{\cdot + t}/d \llambda, d C^{\oP}_{\cdot + t}/ d \llambda) \in \Theta ( X \circ \theta_t ) = \Theta (X_{\cdot + t}),
\]
which implies \( \oP \in \cC(t, \omega) \). 
The converse implication follows by symmetry.
\end{proof}

\begin{definition}
	A correspondence \(\cU \colon \bR_+ \times \bR \twoheadrightarrow \mathfrak{P}(\Omega)\) is said to be 
	\begin{enumerate}
		\item[\textup{(i)}]
		\emph{stable under conditioning} if for any \((t, x) \in \bR_+ \times \bR\), any stopping time \(\tau\) with \(t \leq \tau < \infty\), and any \(P \in \cU(t, x)\), there exists a \(P\)-null set \(N \in \cF_\tau\) such that 
		\(\delta_{\omega (\tau(\omega))} \otimes_{\tau (\omega)} P (\cdot | \mathcal{F}_\tau) (\omega) \in \cU(\tau (\omega), \omega (\tau (\omega)))\) for all \(\omega \not \in N\);
		\item[\textup{(ii)}]
		\emph{stable under pasting} if for any \((t, x) \in \bR_+ \times \bR\), any stopping time \(\tau\) with \(t \leq \tau < \infty\), any \(P \in \cU(t, x)\) and any \(\mathcal{F}_\tau\)-measurable map \(\Omega \ni \omega \mapsto Q_\omega \in \mathfrak{P}(\Omega)\) the following implication holds:
		\[
		P\text{-a.a. } \omega \in \Omega \quad \delta_{\omega (\tau(\omega))} \otimes_{\tau (\omega)} Q_\omega \in \cU (\tau (\omega), \omega (\tau(\omega)))\quad \Longrightarrow \quad P \otimes_\tau Q \in \cU(t, x).
		\]
	\end{enumerate}
\end{definition}

\begin{lemma} \label{lem: K stable under both}
	The correspondence \(\cK\) is stable under conditioning and pasting.
\end{lemma}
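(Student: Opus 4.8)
The plan is to verify the two stability properties from the definition directly, using the characterization of $\cC$ via conditional laws and the behaviour of semimartingale characteristics under conditioning and pasting. Recall that by Lemma~\ref{lem: k idenity} and Lemma~\ref{lem: p^t} it suffices to understand $\cC(t,\omega)$ (since $\cK(t,x) = \cC(t,\bx)$ and the $(\cdot)^t$/$(\cdot)_t$ operations are well-behaved), and that by Lemma~\ref{lem: iwie Markov} membership in $\cC(t,\omega)$ only depends on $\omega$ through $\omega(t)$, so all the statements reduce to claims about $\cK$ at the level of the underlying paths. The workhorse is the standard fact (as in \cite[Chapter~6]{SV} and \cite[Theorem~III.3.40 / Proposition~IX.1.4]{JS}, whose path-continuous versions are recorded in Lemmata~\ref{lem: jacod restatements} and \ref{lem: JS convex} above) that regular conditional probability distributions of a semimartingale law are again semimartingale laws with the ``same'' differential characteristics, and conversely that pasting a semimartingale law with a measurable family of semimartingale laws that match at the pasting time produces a semimartingale law whose characteristics on each piece agree with the respective pieces.

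For \textbf{stability under conditioning}: fix $(t,x)$, a finite stopping time $\tau \geq t$, and $P \in \cK(t,x) = \cC(t,\bx)$. Let $P(\cdot \mid \cF_\tau)(\omega)$ be a regular conditional probability. First, $P(X^t = \bx^t) = 1$ forces $\tau(\omega) \geq t$ and $\omega^t = \bx^t$ for $P$-a.a.\ $\omega$, so $\delta_{\omega(\tau(\omega))}\otimes_{\tau(\omega)} P(\cdot\mid\cF_\tau)(\omega)$ is concentrated on paths that agree with the constant $\omega(\tau(\omega))$ up to time $\tau(\omega)$. Second, by the conditioning lemma for characteristics (a localized version of \cite[Theorem~III.3.40]{JS}, using the stopping-time argument with $\tau_M$ exactly as in the proof of Proposition~\ref{prop: closedness}), outside a $P$-null set $N \in \cF_\tau$ the shifted coordinate process under $\delta_{\omega(\tau(\omega))}\otimes_{\tau(\omega)} P(\cdot\mid\cF_\tau)(\omega)$ is a semimartingale after $\tau(\omega)$ whose differential characteristics coincide $(\llambda\otimes\,\cdot\,)$-a.e.\ with those of $P$ after $\tau(\omega)$; since the latter lie in $\Theta$, and since $\Theta$ has Markovian structure so that $\Theta(\cdot + \tau(\omega), X)$ only sees the current position, we conclude $\delta_{\omega(\tau(\omega))}\otimes_{\tau(\omega)} P(\cdot\mid\cF_\tau)(\omega) \in \cC(\tau(\omega),\omega(\tau(\omega))) = \cK(\tau(\omega),\omega(\tau(\omega)))$ for $\omega \notin N$.

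For \textbf{stability under pasting}: fix $(t,x)$, $\tau$, $P \in \cK(t,x)$, and an $\cF_\tau$-measurable kernel $\omega \mapsto Q_\omega$ with $\delta_{\omega(\tau(\omega))}\otimes_{\tau(\omega)} Q_\omega \in \cK(\tau(\omega),\omega(\tau(\omega)))$ for $P$-a.a.\ $\omega$. Form $R := P\otimes_\tau Q$. The pasted measure keeps $R(X^t = \bx^t)=1$ since $P$ does and the kernels match $P$ at time $\tau \geq t$. By the pasting lemma for characteristics (the path-continuous analogue of \cite[Theorem~III.3.40]{JS} together with Lemma~\ref{lem: jacod restatements}, again localized by $\tau_M$ as in Proposition~\ref{prop: closedness}), $R \in \fPas(t)$ and the differential characteristics of $X_{\cdot + t}$ under $R$ agree $(\llambda\otimes R)$-a.e.\ with those of $P$ on $\of t, \tau\gs$ and with those of $Q_\omega$ on $\of \tau, \infty\gs$; both lie in $\Theta(\cdot+t,X)$ by assumption (using once more the Markovian structure of $\Theta$ to identify $\Theta(\cdot,\omega\otimes_t X) = \Theta(\cdot,X)$ on the relevant set). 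Hence $R \in \cC(t,\bx) = \cK(t,x)$.

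The main obstacle is the careful bookkeeping in the characteristics-under-conditioning/pasting step: one must pass to the right-continuous filtration, localize along $\tau_M \circ \Phi$-type stopping times to turn the relevant local martingales into genuine martingales (exploiting Condition~\ref{cond: LG} for the uniform boundedness), and track the differential characteristics through the shift and concatenation maps while invoking the Markovian structure of $\Theta$ to make the ``frozen past'' irrelevant. Everything else is a direct unwinding of the definitions together with the already-established Lemmata~\ref{lem: jacod restatements}, \ref{lem: implication c^*}, \ref{lem: p^t}, \ref{lem: k idenity} and \ref{lem: iwie Markov}; the measurability of $(t,x)\mapsto\cK(t,x)$-related objects needed along the way is covered by Proposition~\ref{prop: K upper hemi and compact}.
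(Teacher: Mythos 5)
Your overall plan is the right one, and you correctly identify the two conceptual ingredients that make the result go through: the ``disintegration/recombination'' behaviour of semimartingale laws under conditioning and pasting, and the use of Lemma~\ref{lem: iwie Markov} together with the Markovian structure of \(\Theta\) to transfer between \(\cC(\tau(\omega),\omega)\) and \(\cK(\tau(\omega),\omega(\tau(\omega)))\). The paper, however, does not re-derive any of this: its proof is a one-line citation of \cite[Corollary~3.12]{CN22} (conditioning) and \cite[Lemma~3.17]{CN22} (pasting) for the correspondence \(\cC\), combined with Lemma~\ref{lem: iwie Markov}. So you are attempting a re-derivation of what the companion paper already provides.

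The problem is that the technical heart of your argument --- what you call the ``conditioning lemma for characteristics'' and the ``pasting lemma for characteristics'' --- is asserted rather than proved, and the references you adduce do not in fact contain these results. Lemma~\ref{lem: JS convex} (a path-continuous version of \cite[Lemma~III.3.38, Theorem~III.3.40]{JS}) concerns characteristics under \emph{convex combinations and Girsanov-type absolutely continuous changes of measure}, not under conditioning on a \(\sigma\)-field or pasting along a stopping time. Lemma~\ref{lem: jacod restatements} transfers the semimartingale property and its characteristics along a measurable map \(\phi\) satisfying \(\phi^{-1}(\cF^*_t)=\cF'_t\); it is used in this paper for the shift maps \(\theta_t,\gamma_t\) and for the product-space construction in Step~4 of Proposition~\ref{prop: closedness}, but it does not give disintegration of a semimartingale law into regular conditional probabilities. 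Proposition~IX.1.4 of \cite{JS} is a weak-convergence statement, again not about conditioning. What is actually required is a Stroock--Varadhan-type statement that, for \(P\)-a.a.\ \(\omega\), the conditional law \(P(\cdot\mid\cF_\tau)(\omega)\) is a semimartingale law after \(\tau(\omega)\) whose shifted differential characteristics lie \((\llambda\otimes P(\cdot\mid\cF_\tau)(\omega))\)-a.e.\ in \(\Theta(\cdot+\tau(\omega),X)\), together with the converse pasting statement; these are precisely \cite[Corollary~3.12]{CN22} and \cite[Lemma~3.17]{CN22}, and they are not reducible to the lemmata in this paper. Your observation that the pasted measure \(\delta_{\omega(\tau(\omega))}\otimes_{\tau(\omega)}P(\cdot\mid\cF_\tau)(\omega)\) coincides with \(P(\cdot\mid\cF_\tau)(\omega)\) for \(P\)-a.a.\ \(\omega\), and that Lemma~\ref{lem: iwie Markov} then lets one pass to the constant path \(\bx\), is correct and is exactly the paper's remaining step. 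In short: the scaffolding is sound, but the load-bearing technical lemma is missing, and the citations offered do not supply it.
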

\begin{proof}
	Stability under conditioning follows from \cite[Corollary~6.12]{CN22} and Lemma~\ref{lem: iwie Markov}, and stability under pasting follows from Lemma \ref{lem: iwie Markov} and \cite[Lemma~6.17]{CN22}.
\end{proof}

Recall from \cite[Definition 18.1]{hitchi} that a correspondence \(\mathcal{U} \colon \bR_+ \times \bR \twoheadrightarrow \mathfrak{P}(\Omega)\) is called \emph{measurable} if the lower inverse \(\{ (t, x) \in \bR_+ \times \bR \colon \mathcal{U} (t, x) \cap F \not = \emptyset\}\) is Borel for every closed set \(F \subset \mathfrak{P}(\Omega)\).

\begin{lemma} \label{lem: U to U*}
	Suppose that \(\cU \colon \bR_+ \times \bR \twoheadrightarrow \mathfrak{P}(\Omega)\) is a measurable correspondence with nonempty and compact values such that, for all \((t, x) \in \bR_+ \times \bR\) and \(P \in \cU (t, x)\), \(P (X_s = x\text{ for all } s \in [0, t])= 1\). Suppose further that \(\cU\) is stable under conditioning and pasting. Then, for any \(\phi \in \usc_b (\bR; \bR)\), the correspondence 
	\[
	\cU^* (t, x) := \Big\{ P \in \cU (t, x) \colon E^P \big[ \phi (X_T) \big] = \sup_{Q \in \cU (t, x)} E^Q \big[ \phi (X_T) \big] \Big\}
	\]
	is also measurable with nonempty and compact values and it is stable under conditioning and pasting. Further, if \(\cU\) has convex values, then so does \(\cU^*\).
\end{lemma}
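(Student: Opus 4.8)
\emph{Overall strategy.} The plan is to reduce each of the four assertions about $\cU^*$ to the corresponding property of $\cU$ together with elementary facts about the functional $P\mapsto E^P[\phi(X_T)]$. Throughout I would write $c(t,x):=\sup_{Q\in\cU(t,x)}E^Q[\phi(X_T)]$ for the value function. Since $\phi\in\usc_b(\bR;\bR)$ and $\omega\mapsto\omega(T)$ is continuous for the local uniform topology, $\omega\mapsto\phi(\omega(T))$ is bounded and upper semicontinuous, so by \cite[Theorem 8.10.61]{bogachev} the map $P\mapsto E^P[\phi(X_T)]$ is upper semicontinuous on $\mathfrak{P}(\Omega)$. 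The easy properties come first: as $\cU(t,x)$ is non-empty and compact, the defining supremum is attained, so $\cU^*(t,x)\neq\emptyset$, and $\cU^*(t,x)=\cU(t,x)\cap\{P:E^P[\phi(X_T)]\geq c(t,x)\}$ is compact, being the intersection of a compact and a closed set; if $\cU$ has convex values then for $P,Q\in\cU^*(t,x)$ and $\alpha\in(0,1)$ the measure $\alpha P+(1-\alpha)Q$ lies in $\cU(t,x)$ with $E^{\alpha P+(1-\alpha)Q}[\phi(X_T)]=c(t,x)$, hence in $\cU^*(t,x)$.

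\emph{Measurability.} Here I would first show that $c$ is Borel, which is not immediate because $P\mapsto E^P[\phi(X_T)]$ is only upper semicontinuous. Choose bounded continuous $\phi_k\downarrow\phi$ pointwise (e.g.\ the sup-convolutions $\phi_k(y)=\sup_{z\in\bR}(\phi(z)-k|y-z|)$); then $E^Q[\phi_k(X_T)]\downarrow E^Q[\phi(X_T)]$ for each $Q$ by dominated convergence, while $P\mapsto E^P[\phi_k(X_T)]$ is continuous. Taking a countable family $\{g_n\}$ of Borel selections with $\cU(t,x)=\overline{\{g_n(t,x):n\in\N\}}$ (available since $\cU$ is measurable and closed-valued with values in the Polish space $\mathfrak{P}(\Omega)$, \cite[Theorem 18.13]{hitchi}), the functions $c_k(t,x)=\sup_n E^{g_n(t,x)}[\phi_k(X_T)]$ are Borel and equal $\sup_{Q\in\cU(t,x)}E^Q[\phi_k(X_T)]$; they decrease and dominate $c$. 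For fixed $(t,x)$ the sets $\{Q\in\cU(t,x):E^Q[\phi_k(X_T)]\geq\inf_j c_j(t,x)\}$ are non-empty compact and decreasing, so the finite intersection property gives $Q^*\in\cU(t,x)$ with $E^{Q^*}[\phi(X_T)]=\lim_k E^{Q^*}[\phi_k(X_T)]\geq\inf_k c_k(t,x)$; as $E^{Q^*}[\phi(X_T)]\leq c(t,x)$ this forces $\inf_k c_k=c$, so $c$ is Borel. Then $\on{gr}\cU^*=\on{gr}\cU\cap\{(t,x,P):E^P[\phi(X_T)]\geq c(t,x)\}$ is Borel, and since $\cU^*$ is compact-valued this makes $\cU^*$ measurable (\cite[Theorem 18.6]{hitchi}); in particular $\cU^*$ admits a Borel selection $\sigma$ with $E^{\sigma(s,y)}[\phi(X_T)]=c(s,y)$.

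\emph{Stability under conditioning.} Fix $(t,x)$, a finite stopping time $\tau\geq t$ and $P\in\cU^*(t,x)$, and set $\bar P_\omega:=\delta_{\omega(\tau(\omega))}\otimes_{\tau(\omega)}P(\cdot|\cF_\tau)(\omega)$; by stability under conditioning of $\cU$, $\bar P_\omega\in\cU(\tau(\omega),\omega(\tau(\omega)))$ off a $P$-null set $N\in\cF_\tau$. On $\{\tau>T\}\setminus N$ there is nothing to do, because every $Q\in\cU(\tau(\omega),\omega(\tau(\omega)))$ is supported on paths constant and equal to $\omega(\tau(\omega))$ on $[0,\tau(\omega)]\supseteq[0,T]$, so $E^Q[\phi(X_T)]=\phi(\omega(\tau(\omega)))$ for all such $Q$ and hence $\cU^*(\tau(\omega),\omega(\tau(\omega)))=\cU(\tau(\omega),\omega(\tau(\omega)))$. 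On $\{\tau\leq T\}$ the support property of the regular conditional probability yields $E^{\bar P_\omega}[\phi(X_T)]=E^{P(\cdot|\cF_\tau)(\omega)}[\phi(X_T)]=:h(\omega)$, a version of $E^P[\phi(X_T)|\cF_\tau]$, with $h(\omega)\leq c_\tau(\omega):=c(\tau(\omega),\omega(\tau(\omega)))$ since $\bar P_\omega\in\cU(\tau(\omega),\omega(\tau(\omega)))$. To get the reverse inequality $P$-a.s.\ I would introduce the $\cF_\tau$-measurable kernel $R_\omega:=\sigma(\tau(\omega),\omega(\tau(\omega)))$ on $\{\tau\leq T\}\setminus N$ and $R_\omega:=\bar P_\omega$ elsewhere; as each $R_\omega\in\cU(\tau(\omega),\omega(\tau(\omega)))$ is constant up to time $\tau(\omega)$ one has $\delta_{\omega(\tau(\omega))}\otimes_{\tau(\omega)}R_\omega=R_\omega$, so stability under pasting of $\cU$ gives $P\otimes_\tau R\in\cU(t,x)$. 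Using that $(\omega\otimes_{\tau(\omega)}\omega')(T)$ equals $\omega'(T)$ when $\tau(\omega)\leq T$ (again by constancy up to $\tau(\omega)$ under $R_\omega$) and $\omega(T)$ otherwise, the pasting formula gives $E^{P\otimes_\tau R}[\phi(X_T)]=\int_{\{\tau\leq T\}}c_\tau\,dP+E^P[\phi(X_T)\1_{\{\tau>T\}}]$; comparing with $E^{P\otimes_\tau R}[\phi(X_T)]\leq c(t,x)=E^P[\phi(X_T)]$ and cancelling the common term yields $\int_{\{\tau\leq T\}}c_\tau\,dP\leq\int_{\{\tau\leq T\}}h\,dP$, hence $h=c_\tau$ $P$-a.s.\ on $\{\tau\leq T\}$. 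Thus $\bar P_\omega\in\cU^*(\tau(\omega),\omega(\tau(\omega)))$ for $P$-a.a.\ $\omega$, which is exactly stability under conditioning for $\cU^*$.

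\emph{Stability under pasting, and the main obstacle.} For $P\in\cU^*(t,x)$, a finite stopping time $\tau\geq t$ and an $\cF_\tau$-measurable kernel $\omega\mapsto Q_\omega$ with $\tilde Q_\omega:=\delta_{\omega(\tau(\omega))}\otimes_{\tau(\omega)}Q_\omega\in\cU^*(\tau(\omega),\omega(\tau(\omega)))$ $P$-a.s., stability under pasting of $\cU\supseteq\cU^*$ gives $P\otimes_\tau Q\in\cU(t,x)$, so it only remains to see $E^{P\otimes_\tau Q}[\phi(X_T)]=c(t,x)$. Replacing $Q_\omega$ by $\tilde Q_\omega$ (legitimate because $\omega\otimes_{\tau(\omega)}\omega'$ depends on $\omega'$ only through its increments after $\tau(\omega)$) and repeating the computation above gives $E^{P\otimes_\tau Q}[\phi(X_T)]=\int_{\{\tau\leq T\}}c_\tau\,dP+E^P[\phi(X_T)\1_{\{\tau>T\}}]$; by the stability of $\cU^*$ under conditioning just established, $c_\tau=E^P[\phi(X_T)|\cF_\tau]$ $P$-a.s.\ on $\{\tau\leq T\}$, so the right-hand side equals $E^P[\phi(X_T)]=c(t,x)$, whence $P\otimes_\tau Q\in\cU^*(t,x)$. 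I expect the conditioning step to be the real difficulty: turning "$\bar P_\omega\in\cU$ along the trajectory" into "$\bar P_\omega\in\cU^*$ along the trajectory" forces one to pit $P$ against the competitor $P\otimes_\tau R$ built from the measurable optimal selection $\sigma$ — which is why the Borel measurability of $c$ (proved via the monotone $C_b$-approximation and a finite-intersection argument, precisely because $P\mapsto E^P[\phi(X_T)]$ is only upper semicontinuous) must be settled first — and to keep track of the set $\{\tau>T\}$, on which $\phi(X_T)$ is already $\cF_\tau$-measurable and the two correspondences coincide.
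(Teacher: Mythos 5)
Your proposal is correct and follows essentially the same route as the paper's proof: establish that $\cU^*$ is measurable, compact, non-empty, and convex-valued, then obtain stability under conditioning by comparing $P\in\cU^*(t,x)$ against the competitor $P\otimes_\tau R$ built from a measurable argmax selection, and finally deduce stability under pasting from the already-established stability under conditioning. The one place where you genuinely add work is measurability: the paper simply invokes \cite[Theorem 18.10]{hitchi} together with the argmax-measurability lemma \cite[Lemma 12.1.7]{SV} (which directly handles an upper semicontinuous objective over a measurable compact-valued correspondence), whereas you reprove Borel measurability of the value function $c$ from scratch via monotone approximation of $\phi$ by bounded continuous $\phi_k$, the Castaing representation, and a nested-compacts argument. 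That detour is correct but replicates the content of the SV lemma; if you keep it, the step from ``$\operatorname{gr}\cU^*$ Borel with compact fibres'' to ``$\cU^*$ measurable'' is more naturally justified via the Arsenin–Kunugui projection theorem (or by writing $\cU^*$ as a measurable intersection, as the paper does) than via \cite[Theorem 18.6]{hitchi}, which addresses the equivalence of measurability and weak measurability rather than the graph criterion. In the conditioning step, your presentation is slightly cleaner than the paper's: you split off $\{\tau>T\}$ by observing that $\cU^*=\cU$ there (all measures in $\cU(\tau(\omega),\omega(\tau(\omega)))$ assign the same value to $\phi(X_T)$), and then force $h=c_\tau$ a.s.\ on $\{\tau\le T\}$ from $h\le c_\tau$ plus the competitor inequality, whereas the paper introduces an explicit exceptional set $A$ and shows $P(A)=0$; the two are equivalent.
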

\begin{proof}
	We adapt the proof of \cite[Lemma 12.2.2]{SV}, see also the proof of \cite[Lemma~3.4 (a, d)]{hausmann86}.	
 As \(\psi\) is assumed to be upper semicontinuous, \cite[Theorem 2.43]{hitchi} implies that \( \cU^* \) has nonempty and compact values. Moreover, \cite[Theorem~18.10]{hitchi} and \cite[Lemma~12.1.7]{SV} imply that \(\cU^*\) is measurable.
	The final claim for the convexity is obvious. It is left to show that \(\cU^*\) is stable under conditioning and pasting.
	Take \((t, x) \in \bR_+ \times \bR, P \in \cU^* (t, x)\) and let \(\tau\) be a stopping time such that \(t \leq \tau < \infty\). We define 
	\begin{align*}
	N &:= \big\{ \omega \in \Omega \colon P_\omega := \delta_{\omega (\tau (\omega))} \otimes_{\tau (\omega)} P (\cdot | \cF_\tau) (\omega) \not \in \cU (\tau (\omega), \omega (\tau (\omega)))\big\}, \\
	A &:= \big\{ \omega \in \Omega \backslash N \colon P_\omega \not \in \cU^* (\tau (\omega), \omega (\tau (\omega)))\big\}.
	\end{align*}
	As \(\cU\) is stable under conditioning, we have \(P(N) = 0\). By \cite[Lemma 12.1.9]{SV}, \(N, A \in \cF_\tau\). As we already know that \(\cU^*\) is measurable, by virtue of \cite[Theorem 12.1.10]{SV}, there exists a measurable map \((s, y) \mapsto R (s, y)\) such that \(R (s, y) \in \cU^* (s, y)\). We set \(R_\omega := R (\tau (\omega), \omega (\tau (\omega)))\), for \(\omega \in \Omega\), and note that \(\omega \mapsto R_\omega\) is \(\cF_\tau\)-measurable. Further, we set 
	\[
	Q_\omega := \begin{cases} R_\omega, & \omega \in N \cup A,\\
	P_\omega, & \omega \not \in N \cup A. \end{cases}
	\]
	By definition of \(R\) and \(N\), \(Q_\omega \in \cU (\tau(\omega), \omega (\tau (\omega)))\) for all \(\omega \in \Omega\).
	As \(\cU\) is stable under pasting, we have \(P \otimes_{\tau} Q\in \cU(t, x)\) and we obtain
	\begin{align*}
	\sup_{Q^* \in \cU (t, x)} &E^{Q^*} \big[ \phi (X_T) \big] 
	\\&\geq E^{P \otimes_\tau Q} \big[ \phi (X_T) \big]
	\\&= \int_{N\cup A} E^{\delta_\omega \otimes_{\tau (\omega)} R_\omega} \big[ \phi (X_T) \big] P (d \omega) + E^P \big[ \1_{N^c \cap A^c}E^P \big[ \phi (X_T) | \cF_\tau \big] \big] 
	\\&= \int_{A} \big[ E^{\delta_\omega \otimes_{\tau (\omega)} R_\omega} \big[ \phi (X_T) \big] - E^{\delta_\omega \otimes_{\tau (\omega)} P_\omega} \big[ \phi (X_T) \big] \big] P (d \omega) + \sup_{Q^* \in \cU (t, x)} E^{Q^*} \big[ \phi (X_T) \big]
	\\&= \int_{A} \big[ E^{R_\omega} \big[ \phi (X_T) \big] - E^{P_\omega} \big[ \phi (X_T) \big] \big] P (d \omega) + \sup_{Q^* \in \cU (t, x)} E^{Q^*} \big[ \phi (X_T) \big].
	\end{align*}
As \(E^{R_\omega} \big[ \phi (X_T) \big] > E^{P_\omega} \big[ \phi (X_T) \big]\) for all \(\omega \in A\), we conclude that \(P (A) = 0\). This proves that \(\cU^*\) is stable under conditioning. 

Next, we prove stability under pasting. Let \((t, x) \in \bR_+ \times \bR\), take a stopping time \(\tau\) with \(t \leq \tau < \infty\), a probability measure \(P \in \cU^*(t, x)\) and an \(\mathcal{F}_\tau\)-measurable map \(\Omega \ni \omega \mapsto Q_\omega \in \mathfrak{P}(\Omega)\) such that, for \(P\)-a.a. \(\omega \in \Omega\), 
\(\delta_{\omega (\tau(\omega))} \otimes_{\tau (\omega)} Q_\omega \in \cU^* (\tau (\omega), \omega (\tau (\omega)))\). 
As \(\cU\) is stable under pasting, we have \(P \otimes_\tau Q \in \cU (t, x)\). Further, recall that \(\delta_{\omega (\tau (\omega))} \otimes_{\tau (\omega)} P (\cdot | \cF_\tau) (\omega)\in \cU (\tau (\omega), \omega (\tau (\omega)))\) for \(P\)-a.a. \(\omega \in \Omega\), as \(\cU\) is stable under conditioning. Thus, we get 
	\begin{align*}
\sup_{Q^* \in \cU (t, x)} E^{Q^*} \big[ \phi (X_T) \big] &\geq E^{P \otimes_\tau Q} \big[ \phi (X_T) \big] 
\\&= \int E^{\delta_\omega \otimes_{\tau (\omega)} Q_\omega} \big[ \phi (X_T) \big] P (d \omega)
\\&= \int E^{\delta_{\omega (\tau (\omega))} \otimes_{\tau (\omega)} Q_\omega} \big[ \phi (X_T) \big] \1_{\{\tau (\omega) < T\}}P (d \omega) + E^P \big[ \phi (X_T) \1_{\{T \leq \tau\}}\big]
\\&= \int \sup_{Q^* \in \cU (\tau (\omega), \omega (\tau (\omega)))} E^{Q^*} \big[ \phi (X_T) \big] \1_{\{\tau (\omega) < T\}} P (d \omega) + E^P \big[ \phi (X_T) \1_{\{T \leq \tau\}}\big]
\\&\geq \int E^{\delta_{\omega (\tau (\omega))} \otimes_{\tau (\omega)} P (\cdot | \cF_\tau)(\omega)} \big[ \phi (X_T) \big] \1_{\{\tau (\omega) < T\}} P (d \omega) + E^P \big[ \phi (X_T) \1_{\{T \leq \tau\}}\big]
\\&= E^P \big[ E^P \big[ \phi (X_T) | \cF_\tau \big] \1_{\{\tau < T\}} \big] + E^P \big[ \phi (X_T) \1_{\{T \leq \tau\}}\big]
\\&= E^P \big[ \phi (X_T) \big]
\\&= \sup_{Q^* \in \cU (t, x)} E^{Q^*} \big[ \phi (X_T) \big].
\end{align*}
This implies that \(P \otimes_\tau Q\in \cU^* (t, x)\). The proof is complete.
\end{proof}

\subsubsection{Proof of Theorem \ref{theo: strong Markov selection}}
	We adapt the proofs of \cite[Theorems 6.2.3 and 12.2.3]{SV}, cf. also the proofs of \cite[Proposition 6.6]{nicole1987compactification} and \cite[Proposition 3.2]{hausmann86}.
	
Fix a finite time horizon \(T > 0\) and a function \(\psi \in \usc_b(\bR; \bR)\).
Let \(\{\sigma_n \colon n \in \mathbb{N}\}\) be a dense subset of \((0, \infty)\) and let \(\{\phi_n \colon n \in \mathbb{N}\}\) be a dense subset of \(C_c (\bR)\). Furthermore, let \((\lambda_N, f_N)_{N \in \mathbb{N}}\) be an enumeration of \(\{(\sigma_m, \phi_n) \colon n, m \in \mathbb{N}\}\). 
For \((t, x) \in \bR_+ \times \bR\), define inductively  
\[
\cK^*_0 (t, x) := \Big\{ P \in \cK (t, x)\colon E^P \big[ \psi (X_T) \big] = \sup_{Q \in \cK(t, x)} E^Q \big[ \psi (X_T) \big] \Big\}
\]
and 
\[
\cK^*_{N + 1} (t, x) := \Big\{ P \in \cK^*_N (t, x) \colon E^P \big[ f_{N + 1} (X_{\lambda_{N + 1}}) \big] = \sup_{Q \in \cK^*_N (t, x)} E^Q \big[ f_{N + 1} (X_{\lambda_{N + 1}}) \big] \Big\}, \quad N \in \mathbb{Z}_+.
\]
Moreover, we set \[\cK^*_\infty (t, x) := \bigcap_{N = 0}^\infty \cK_N (t, x).\]

Thanks to Proposition \ref{prop: K upper hemi and compact} and Lemmata \ref{lem: r^* compact} and \ref{lem: K stable under both}, the correspondence \(\cK\) is measurable with nonempty convex and compact values and it is further stable under conditioning and pasting. Thus, by Lemma \ref{lem: U to U*}, the same is true for \(\cK^*_0\) and, by induction, also for every \(\cK^*_N, N \in \mathbb{N}\). 
As (arbitrary) intersections of convex and compact sets are itself convex and compact, \(\cK^*_\infty\) has convex and compact values. Further, by Cantor's intersection theorem, \(\cK^*_\infty\) has nonempty values, and, by \cite[Lemma~18.4]{hitchi}, \(\cK^*_\infty\) is measurable. Moreover, it is clear that \(\cK^*_\infty\) is stable under conditioning, as this is the case for every \(\cK^*_N, N \in \mathbb{Z}_+\). 

We now show that \(\cK^*_\infty\) is singleton-valued. Take \(P, Q \in \cK^*_\infty (t, x)\) for some \((t, x) \in \bR_+ \times \bR\). By definition of \(\cK^*_\infty\), we have 
\[
E^P \big[ f_N (X_{\lambda_N}) \big] = E^Q \big[ f_N (X_{\lambda_N})\big], \quad N \in \mathbb{N}.
\]
This implies that \(P \circ X_s^{-1} = Q \circ X_s^{-1}\) for all \(s \in \bR_+\). Next, we prove that 
\[
E^P \Big[ \prod_{k = 1}^n g_k (X_{t_k}) \Big] = E^Q \Big[ \prod_{k = 1}^n g_k (X_{t_k}) \Big] 
\]
for all \(g_1, \dots, g_n \in C_b (\bR; \bR), t \leq t_1 < t_2 < \dots < t_n < \infty\) and \(n \in \mathbb{N}\). We use induction over \(n\). For \(n = 1\) the claim is implied by the equality \(P \circ X_s^{-1} = Q \circ X_s^{-1}\) for all \(s \in \bR_+\). Suppose that the claim holds for \(n \in \mathbb{N}\) and take test functions \(g_1, \dots, g_{n + 1} \in C_b (\bR; \bR)\) and times \(t \leq t_1 < \dots < t_{n + 1} < \infty\).  We define 
\[
\mathcal{G}_n := \sigma (X_{t_k}, k = 1, \dots, n).
\]
Since 
\[
E^P \Big[ \prod_{k = 1}^{n + 1} g_k (X_{t_k}) \Big] = E^P \Big[ E^P \big[ g_{n + 1} (X_{t_{n + 1}}) | \mathcal{G}_n \big] \prod_{k = 1}^n g_k (X_{t_k}) \Big], 
\]
it suffices to show that \(P\)-a.s. 
\[
E^P \big[ g_{n + 1} (X_{t_{n + 1}}) | \mathcal{G}_n \big] = E^Q \big[ g_{n + 1} (X_{t_{n + 1}}) | \mathcal{G}_n \big].
\]
As \(\cK^*_\infty\) is stable under conditioning, there exists a null set \(N_1 \in \cF_{t_n}\) such that \(\delta_{\omega (t_n)} \otimes_{t_n} P (\cdot | \cF_{t_n}) (\omega) \in \cK^*_\infty (t_n, \omega (t_n))\) for all \(\omega \not \in N_1\). Notice that, by the tower rule, there exists a \(P\)-null set \(N_2 \in \mathcal{G}_n\) such that, for all \(\omega \not \in N_2\) and all \(A \in \cF\), 
\begin{equation} \label{eq: condi}
\begin{split}
\int \delta_{\omega' (t_n)} \otimes_{t_n} P (A | \cF_{t_n}) (\omega') P (d \omega' | \mathcal{G}_n) (\omega) &= \iint \1_A (\omega' (t_n) \otimes_{t_n} \alpha) P (d \alpha | \cF_{t_n}) (\omega' ) P (d \omega' | \mathcal{G}_n) (\omega) 
\\&= \iint \1_A (\omega (t_n) \otimes_{t_n} \alpha) P (d \alpha | \cF_{t_n}) (\omega' ) P (d \omega' | \mathcal{G}_n) (\omega) 
\\&=  \int \1_A (\omega (t_n) \otimes_{t_n} \omega') P (d \omega' | \mathcal{G}_n) (\omega) 
\\&= (\delta_{\omega (t_n)} \otimes_{t_n} P (\cdot | \mathcal{G}_n) (\omega)) (A).
\end{split}
\end{equation}
Let \(N_3 := \{P (N_1 | \mathcal{G}_n) > 0\} \in \mathcal{G}_n\). Clearly, \(E^P [ P (N_1| \mathcal{G}_n)] = P(N_1) = 0\), which implies that \(P (N_3) = 0\). Take \(\omega \not \in N_2 \cup N_3\). As \(\cK^*_\infty\) has convex and compact values and \(\delta_{\omega' (t_n)} \otimes_{t_n} P (\cdot | \cF_{t_n}) (\omega') \in \cK^*_\infty (t_n, \omega' (t_n))\) for all \(\omega' \not \in N_1\), we have  
\[
\int \delta_{\omega' (t_n)} \otimes_{t_n} P (A | \cF_{t_n}) (\omega') P (d \omega' | \mathcal{G}_n) (\omega) \in \cK^*_\infty (t_n, \omega (t_n)).
\]
Consequently, by virtue of \eqref{eq: condi}, we conclude that \(\delta_{\omega (t_n)} \otimes_{t_n} P (\cdot | \mathcal{G}_n) (\omega) \in \cK^*_\infty (t_n, \omega (t_n))\). Similarly, there exists a \(Q\)-null set \(N_4 \in \mathcal{G}_n\) such that \(\delta_{\omega (t_n)} \otimes_{t_n} Q (\cdot | \mathcal{G}_n) (\omega) \in \cK^*_\infty (t_n, \omega (t_n))\) for all \(\omega \not \in N_4\). Set \(N := N_2 \cup N_3 \cup N_4\). As \(P = Q\) on \(\mathcal{G}_n\), we get that \(P (N) = 0\). For all \(\omega \not \in N\), the induction base implies that 
\begin{align*}
E^P\big[ g_{n + 1} (X_{t_{n + 1}}) | \mathcal{G}_n\big] (\omega) &= E^{\delta_{\omega (t_n)} \otimes_{t_n} P (\cdot | \mathcal{G}_n)(\omega)} \big[ g_{n + 1} (X_{t_{n + 1}})\big] 
\\&= E^{\delta_{\omega (t_n)} \otimes_{t_n} Q (\cdot | \mathcal{G}_n)(\omega)} \big[ g_{n + 1} (X_{t_{n + 1}})\big]
\\&= E^Q\big[ g_{n + 1} (X_{t_{n + 1}}) | \mathcal{G}_n\big] (\omega).
\end{align*}
The induction step is complete and hence, \(P = Q\). 

We proved that \(\cK^*_\infty\) is singleton-valued and we write \(\cK^*_\infty (s, y) = \{P_{(s, y)}\}\). By the measurability of \(\cK^*_\infty\), the map \((s, y) \mapsto P_{(s,y)}\) is measurable. It remains to show the strong Markov property of the family \(\{P_{(s, y)} \colon (s, y) \in \bR_+ \times \bR\}\). Take \((s, y) \in \bR_+ \times \bR\). As \(\cK^*_\infty\) is stable under conditioning, for every finite stopping time \(\tau \geq s\), there exists a \(P_{(s, x)}\)-null set \(N\) such that, for all \(\omega \not \in N\),
\[
\delta_{\omega (\tau (\omega))} \otimes_{\tau (\omega)} P_{(s, y)} (\cdot | \cF_{\tau})(\omega) \in \cU_\infty (\tau (\omega), \omega (\tau (\omega))) = \{P_{(\tau (\omega), \omega (\tau (\omega)))}\}.\]
This yields, for all \(\omega \not \in N\), that 
\[
P_{(s, y)} (\cdot | \cF_\tau)(\omega) = \delta_\omega \otimes_{\tau (\omega)} \big[ \delta_{\omega (\tau (\omega))} \otimes_{\tau (\omega)} P_{(s, y)} (\cdot | \cF_{\tau})(\omega)\big] = \delta_\omega \otimes_{\tau (\omega)} P_{(\tau (\omega), \omega (\tau (\omega)))}.
\]
This is the strong Markov property and consequently, the proof is complete.
\qed

\begin{remark}
	Notice that the strong Markov property of the selection \(\{P_{(s, y)} \colon (s, y) \in \bR_+ \times \bR\}\) follows solely from the stability under conditioning property of \(\cK^*_\infty\). We emphasise that the stability under pasting property of each \(\cK^*_N, N \in \mathbb{Z}_+,\) is crucial for its proof. Indeed, in Lemma \ref{lem: U to U*}, the fact that \(\cU\) is stable under pasting has been used to establish that \(\cU^*\) is stable under conditioning.
\end{remark}

\subsection{Proof of the Strong Feller Selection Principle: Theorem \ref{theo: Feller selection}}
We start with the following partial extension of Lemma \ref{lem: maximal inequality}.
\begin{lemma}\label{lem: moment bound abvanced}
	Suppose that Condition \ref{cond: LG} holds. Let \(T, m > 0\) and let \(K \subset \bR\) be bounded. Then, 
	\[
	\sup_{s \in [0, T]} \sup_{x \in K} \sup_{P \in \cK (s, x)} E^P \Big[ \sup_{r \in [0, T]} | X_r |^m \Big] < \infty.
	\]
\end{lemma}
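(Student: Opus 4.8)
The plan is to reduce the claim to Lemma~\ref{lem: maximal inequality} via the structural identity for $\cK$ established earlier. Recall from Lemma~\ref{lem: k idenity} that $\cK(s, x) = \{P^s : P \in \cK(0, x)\}$, and that $\cK(0, x) = \cC(0, \bx) = \cR(x)$.

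First I would fix $(s, x) \in [0, T] \times K$ and a measure $P \in \cK(s, x)$, and write $P = Q^s = Q \circ \gamma_s^{-1}$ for some $Q \in \cR(x)$, where $\gamma_s(\omega) = \omega((\cdot - s)^+)$. The change-of-variables formula then gives
\[
E^P\Big[ \sup_{r \in [0, T]} |X_r|^m \Big] = E^Q\Big[ \sup_{r \in [0, T]} \big|X_{(r - s)^+}\big|^m \Big] \le E^Q\Big[ \sup_{u \in [0, T]} |X_u|^m \Big],
\]
where the inequality holds because $(r - s)^+ \le r \le T$ for every $r \in [0, T]$, so the index $(r-s)^+$ ranges within $[0, T]$. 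Crucially, the right-hand side no longer depends on $s$.

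Next I would invoke Lemma~\ref{lem: maximal inequality}: since $K$ is bounded, $\sup_{x \in K} \sup_{Q \in \cR(x)} E^Q\big[\sup_{u \in [0, T]} |X_u|^m\big] < \infty$. Taking the supremum over $P \in \cK(s, x)$, $x \in K$ and $s \in [0, T]$ on the left-hand side of the display above then yields the assertion. There is no real obstacle here; the only points deserving a word are the identification $\cK(0, x) = \cR(x)$ (immediate since, for $t = 0$, the defining constraint $P(X^0 = \bx^0) = 1$ reduces to $P(X_0 = x) = 1$) and the harmless measure-theoretic transfer through $\gamma_s$, both of which follow directly from the definitions and Lemma~\ref{lem: k idenity}.
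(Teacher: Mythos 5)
Your proof is correct and follows exactly the same route as the paper: reduce from $\cK(s,x)$ to $\cK(0,x)=\cR(x)$ via Lemma~\ref{lem: k idenity} and the pushforward through $\gamma_s$, observe that $\sup_{r\in[0,T]}|X_{(r-s)^+}|^m \le \sup_{u\in[0,T]}|X_u|^m$ to remove the $s$-dependence, and then invoke Lemma~\ref{lem: maximal inequality}. No gaps.
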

\begin{proof}
	For every \((s, x) \in [0, T] \times \bR\), Lemma \ref{lem: k idenity} yields that
	\[
	 \sup_{P \in \cK (s, x)} E^P \Big[ \sup_{r \in [0, T]} | X_r |^m \Big] =  \sup_{P \in \cK (0, x)} E^{P^s}\Big[ \sup_{r \in [0, T]} | X_r |^m \Big] \leq  \sup_{P \in \cK (0, x)} E^P \Big[ \sup_{r \in [0, T]} | X_r |^m \Big].
	\]
	Now, the claim follows from Lemma \ref{lem: maximal inequality}.
\end{proof}

\begin{theorem} \label{theo: selection is Feller}
	Suppose that the Conditions \ref{cond: LG}, \ref{cond: continuity}  and \ref{cond: ellipticity} hold. Let \(\p := \{P_{ (t, x)} \colon (t, x) \in \bR_+ \times \bR\}\) be a strong Markov family such that \(P_{(t, x)} \in \cK (t, x)\) for all \((t, x) \in \bR_+ \times \bR\). Then, \(\p\) has the strong Feller and the \(C_0\)--Feller property. 
    If in addition the Conditions \ref{cond: bdd} and \ref{cond: uniform ellipticity} hold, then \(\p\) has the uniform strong Feller property.
\end{theorem}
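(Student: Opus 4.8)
The plan is to deduce both Feller properties from a single statement: for $0 \le s < t$, the one-time marginal $P_{(s,x)} \circ X_t^{-1}$ is absolutely continuous with respect to Lebesgue measure, and the density $(s,x,z) \mapsto p(s,x;t,z)$ is continuous, locally uniformly, with a Gaussian-type upper bound. Granting this, the strong Feller property is immediate, since $\int \phi\, dP_{(s,x)}\circ X_t^{-1} = \int \phi(z)\, p(s,x;t,z)\, dz$ is then continuous in $(s,x)$ for every bounded Borel $\phi$ (dominated convergence together with the upper bound to control the tails, or a Scheffé-type argument), and the $C_0$-Feller property follows by additionally checking that $E^{P_{(s,x)}}[\phi(X_t)] \to 0$ as $|x|\to\infty$ whenever $\phi$ vanishes at infinity.

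The first step is \emph{localization}. Since $a$ is continuous and strictly positive and $b, a$ are continuous with $F$ compact, on each ball $\{|y| \le M\}$ there are constants $0 < \lambda_M \le \Lambda_M$ and $K_M$ with $\lambda_M \le a(f,y) \le \Lambda_M$ and $|b(f,y)| \le K_M$ for all $f \in F$; replacing $b, a$ outside the ball by globally bounded, uniformly elliptic coefficients leaves the law of the process unchanged up to the exit time $\tau_M$ of $\{|y|\le M\}$, and by Lemma \ref{lem: moment bound abvanced} one has $P_{(s,x)}(\tau_M \le t) \to 0$ as $M \to \infty$, locally uniformly in $(s,x)$. Hence it suffices to treat globally bounded, uniformly elliptic coefficients. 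For such data, each $P_{(s,x)} \in \cK(s,x)$ solves a martingale problem with progressively measurable, bounded, uniformly elliptic differential characteristics, and Krylov-type estimates (cf.\ \cite{SV}) give that $P_{(s,x)}\circ X_t^{-1}$ has a density bounded by a constant depending only on $t-s$ and the ellipticity and boundedness constants, in particular uniformly in $(s,x)$. To upgrade this to continuity of the density in the starting point I would exploit the Markov structure: for a strong Markov family one shows that the differential characteristics of $P_{(s,x)}$ can be taken to be functions of $(r, X_r)$, so that $P_{(s,x)}$ is the law of a one-dimensional time-inhomogeneous SDE $dX_r = \bar b(r, X_r)\,dr + \sqrt{\bar a(r, X_r)}\, dW_r$ with $\bar b$ bounded measurable and $\bar a$ bounded measurable and uniformly elliptic. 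In dimension one such problems are weakly well posed for measurable coefficients, and the associated transition density enjoys Aronson--Nash Gaussian bounds and joint Hölder continuity in all four variables, with constants depending only on the ellipticity and boundedness constants; reassembling via the localization gives the continuity of $(s,x,z)\mapsto p(s,x;t,z)$ and the uniform Gaussian upper bound claimed above.

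For the $C_0$-Feller property the only remaining point is the decay $E^{P_{(s,x)}}[\phi(X_t)]\to 0$ as $|x|\to\infty$. This does not follow from a Chebyshev bound alone (under Condition \ref{cond: LG} the volatility can be of order $|x|$, as for geometric Brownian motion), so I would argue with the scale function, or with a Lyapunov function such as $(1+|y|^2)^{-\delta}$, to show that under the linear growth condition a process started far from the origin remains far from the origin up to time $t$ with probability tending to one.

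The step I expect to be the main obstacle is exactly the passage from the \emph{abstract} strong Markov selection to a tractable description of its time-$t$ marginals: because the set-valued martingale problem attached to $\Theta$ is generically not well posed and $\cK(s,x)$ contains many laws with different marginals, one genuinely has to use the strong Markov property --- either to represent the characteristics as functions of the current state (as above), or, if one prefers to argue through weak limits $P_{(s_n,x_n)}\to Q$ (which exist by Lemma \ref{lem: moment bound abvanced} and Prohorov), to identify $Q\circ X_t^{-1}$ with $P_{(s,x)}\circ X_t^{-1}$ by uniqueness of the Fokker--Planck Cauchy problem for the conditionally averaged, uniformly elliptic coefficients, after checking these converge appropriately. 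This is precisely where ellipticity (``enough randomness'') and the fundamental estimates of Stroock and Varadhan \cite{SV} are used; localization, the density bounds, the measure-theoretic upgrades, and the decay at infinity are then routine.
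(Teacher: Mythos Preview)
Your plan is correct and shares the paper's skeleton: identify Markovian differential characteristics for the selection, localize to bounded uniformly elliptic data, invoke one-dimensional Stroock--Varadhan regularity for the localized family, and handle the $C_0$ part via a Lyapunov function of the form $(1+y^2)^{-\delta}$. Two points of comparison are worth recording. First, the paper reverses your order of operations: it \emph{begins} with the \c{C}inlar--Jacod--Protter--Sharpe theorem \cite{cinlar80} to extract Borel functions $\mu(t,y), \sigma^2(t,y)$ such that the characteristics of each $P_{(s,x)}$ are $\mu(r,X_r), \sigma^2(r,X_r)$, and only then localizes $\mu,\sigma^2$; this matters because your phrase ``replacing $b,a$ outside the ball \ldots\ leaves the law of the process unchanged up to $\tau_M$'' is not a priori meaningful for the \emph{selection} $P_{(s,x)}$ --- to know the localized law agrees with $P_{(s,x)}$ on $\cF_{\rho_M}$ one needs well-posedness of the localized martingale problem, which in turn requires already having the Markovian coefficients $\mu,\sigma^2$ (then \cite[Exercise~7.3.3, Corollary~10.1.2]{SV} give well-posedness and the local identification). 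Second, once localized the paper does not pass through Aronson--Nash density regularity; it simply cites \cite[Exercise~7.3.5]{SV} for the strong Feller property of the well-posed bounded/elliptic family and pieces things together with the exit-time estimate from Lemma~\ref{lem: moment bound abvanced}. Your route via transition-density H\"older continuity is valid but heavier than needed. For $C_0$, the paper uses exactly your Lyapunov idea with $V(y)=1/(1+y^2)$, showing $e^{-\C t}V(X_{t+s})$ is a $P_{(s,x)}$-supermartingale under Condition~\ref{cond: LG}.
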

\begin{proof}
	First of all, thanks to the fundamental results \cite[Theorems 6.24, 7.14 (iii) and 7.16 (i)]{cinlar80} about Markovian It\^o semimartingales, there are two Borel functions \(\mu \colon \bR_+ \times \bR \to \bR\) and \(\sigma^2 \colon \bR_+ \times \bR \to \bR_+\) such that, for every \((s, x) \in \bR_+ \times \bR\), \(P_{(s, x)}\)-a.s. for \(\llambda\)-a.a. \(t \in \bR_+\)
	\[
	b^P_{t + s} = \mu (t + s, X_t), \quad a^P_{t + s} = \sigma^2 (t + s, X_t).
	\]
	By virtue of the Conditions \ref{cond: LG}, \ref{cond: continuity}  and \ref{cond: ellipticity}, we can w.l.o.g. assume that \(b\) and \(\sigma^2\) are locally bounded and that \(\sigma^2\) is locally bounded away from zero. 
	For \(M > 0\), we set 
		\begin{align*}
	\mu_M (t, x) &:= \begin{cases}
	\mu (t, x), & (t, x) \in [0, M] \times [-M, M], \\
	0, & \text{otherwise}, 
	\end{cases}
\\
	\sigma^2_M (t, x) &:= \begin{cases}
	\sigma^2 (t, x), & (t, x) \in [0, M] \times [-M, M], \\
	\sigma^2 (t \wedge M, x_0), & \text{otherwise},
	\end{cases}
	\end{align*}
	where \(x_0 \in \bR\) is an arbitrary, but fixed, reference point. Furthermore, we define 
	\[
	\rho_M^s := \inf \{t \geq s \colon |X_t| \geq M\} \wedge M, \quad s \in \bR_+, M > 0.
	\]
	Recall from \cite{SV} that a probability measure \(P\) on \((\Omega, \cF)\) is said to be a \emph{solution to the martingale problem for \((\mu_M, \sigma^2_M)\) starting from \((s, x) \in \bR_+ \times \bR\)} if \(P( X_t = x \text{ for all } t \in [0, s]) = 1\) and the processes
		\[
		f (X_t) - \int_s^t \big[ \mu (r, X_r) f' (X_r) + \tfrac{1}{2} \sigma^2 (r, X_r) f'' (X_r) \big] dr \colon \ t \geq s, \ \ f \in C^\infty_c (\bR; \bR), 
		\]
		are \(P\)-martingales.
	Due to \cite[Exercise 7.3.3]{SV} (see also \cite[Theorem 7.1.6]{SV}), for every starting value \((s, x) \in \bR_+ \times \bR\), there exists a unique solution \(P^M_{(s, x)}\) to the martingale problem for \((\mu_M, \sigma^2_M)\) starting from \((s, x)\). Furthermore, by \cite[Corollary 10.1.2]{SV}, we have \(P_{(s, x)} = P^M_{(s, x)}\) on \(\cF_{\rho^s_M}\) for all \(M > 0\) and \((s, x) \in \bR_+ \times \bR\).
	
	Next, take \(T, \varepsilon > 0\) and fix a bounded Borel function \(\phi \colon \bR\to \bR\). W.l.o.g., we assume that \(| \phi | \leq 1\). Take \((t^n, x^n)_{n \in \mathbb{Z}_+} \in [0, T) \times \bR\) such that \((t^n, x^n) \to (t^0, x^0)\). For \(M > T\), by Lemma \ref{lem: moment bound abvanced}, there exists a constant \(C > 0\), which is independent of \(M\), such that, for all \(n \in \mathbb{Z}_+\),
	\[
	P_{(t^n, x^n)} ( \rho^{t^n}_M \leq T) = P_{(t^n, x^n)} \Big( \sup_{s \in [t^n, T]} |X_s| \geq M \Big) \leq \frac{C}{M}.
	\]
	We take \(M > T\) large enough such that 
	\[
	\sup_{n \in \mathbb{Z}_+} P_{(t^n, x^n)} ( \rho^{t^n}_M \leq T) \leq \varepsilon.
	\]
	Thanks to \cite[Exercise 7.3.5]{SV} (see also \cite[Theorem 7.1.9, Exercise 7.3.3]{SV}), there exists an \(N \in \mathbb{N}\), which in particular depends on \(M\), such that, for all \(n \geq N\),
	\[
	\big| E^{P^M_{(t^n, x^n)}} \big[ \phi (X_T) \big] - E^{P^M_{(t^0, x^0)}}\big[ \phi (X_T) \big] \big| \leq \varepsilon.
	\]
	Now, for all \(n \geq N\), we obtain 
	\begin{align*}
	\big| E^{P_{(t^n, x^n)}} \big[ \phi (X_T) \big] &- E^{P_{(t^0, x^0)}}\big[ \phi (X_T) \big] \big| 
	\\&\leq \big| E^{P^M_{(t^n, x^n)}} \big[ \phi (X_T) \1_{\{T < \rho^{t_n}_M\}} \big] - E^{P^M_{(t^0, x^0)}}\big[ \phi (X_T) \1_{\{T < \rho^{t^0}_M\}} \big] \big| 
	\\&\hspace{4.925cm} + P_{(t^n, x^n)} ( \rho^{t^n}_M \leq T) +  P_{ (t^0, x^0)} (\rho^{t^0}_M \leq T)
	\\&\leq \big| E^{P^M_{(t^n, x^n)}} \big[ \phi (X_T) \1_{\{T < \rho^{t_n}_M\}} \big] - E^{P^M_{(t^0, x^0)}}\big[ \phi (X_T) \1_{\{T < \rho^{t^0}_M\}} \big] \big|  +2 \varepsilon
	\\&\leq \big| E^{P^M_{(t^n, x^n)}} \big[ \phi (X_T) \big] - E^{P^M_{(t^0, x^0)}}\big[ \phi (X_T) \big] \big| 
		\\&\hspace{4.925cm} + P^M_{(t^n, x^n)} ( \rho^{t^n}_M \leq T) +  P^M_{ (t^0, x^0)} (\rho^{t^0}_M \leq T)  +2 \varepsilon
		\\&\leq 3 \varepsilon + P_{(t^n, x^n)} ( \rho^{t^n}_M \leq T) +  P_{ (t^0, x^0)} (\rho^{t^0}_M \leq T) 
		\leq 5 \varepsilon,
	\end{align*}
	where we use that \(\mathcal{F}_T \cap \{T < \rho^s_M\} \subset \cF_{\rho^s_M}, \{\rho^s_M \leq T\} \in \cF_{\rho^s_M}\) and that \(P_{(s, x)} = P^M_{(s, x)}\) on \(\cF_{\rho^s_M}\).
	This proves the strong Feller property of \(\p=\{P_{(s, x)} \colon (s, x) \in \bR_+ \times \bR\}\). 
	
	Take \(0 \leq s < T\) and let \(\phi \colon \bR \to \bR\) be a continuous function vanishing at infinity such that \(|\phi| \leq 1\). By the strong Feller property, the map \(x \mapsto E^{P_{(s, x)}} [ \phi (X_T) ]\) is continuous. We now adapt the proof of \cite[Theorem~1]{criens20SPA} to conclude the \(C_0\)--Feller property. Fix \(\varepsilon > 0\). As \(\phi\) vanishes at  infinity, there exists an \(M = M (\varepsilon) > 0\) such that \(|f (y)| \leq \varepsilon\) for all \(|y| > M\). We obtain 
	\begin{align} \label{eq: ineq to show C0}
	E^{P_{(s, x)}} \big[ \phi (X_T) \big] &\leq \varepsilon + P_{(s, x)} ( |X_T| \leq M ).
	\end{align}
	In the following we establish an estimate for the second term. Set \(V (y) := 1 / (1 + y^2)\) for \(y \in \bR\) and let \((b^{P_{(s, x)}}_{\cdot + s}, a^{P_{(s, x)}}_{\cdot + s})\) be the Lebesgue densities of the \(P_{(s, x)}\)-characteristics of the shifted coordinate process \(X_{\cdot + s}\).
	Then, by Condition \ref{cond: LG}, we get \((\llambda \otimes P_{(s, y)})\)-a.e.
	\begin{align*}
	b^{P_{(s,x)}}_{\cdot +  s}  V' (X_{\cdot + s}) + \frac{a^{P_{(s,  x)}}_{\cdot + s} V'' (X_{\cdot + s})}{2}  
	&= \frac{- 2 b^{P_{(s, x)}}_{\cdot +  s} X_{\cdot + s}}{(1 + X_{\cdot + s}^2)^2} + \frac{a^{P_{(s,  x)}}_{\cdot + s}}{2} \Big( \frac{8 X_{\cdot + s}^2}{(1 + X_{\cdot + s}^2)^3} - \frac{2}{(1 + X_{\cdot + s}^2)^2}\Big)
	\\&\leq \C \Big( \frac{|b^{P_{(s, x)}}_{\cdot +  s}| |X_{\cdot + s}|}{(1 + X_{\cdot + s}^2)^2} + \frac{a^{P_{(s,  x)}}_{\cdot + s}}{(1 + X_{\cdot + s}^2)^2}\Big)
	\\&\leq \C \Big( \frac{ |X_{\cdot + s}| + X_{\cdot + s}^2}{(1 + X_{\cdot + s}^2)^2} + \frac{1}{1 + X_{\cdot + s}^2} \Big)
	\\&\leq \frac{\C}{1 + X_{\cdot + s}^2} = \C V(X),
	\end{align*}
	where the constant \(\C > 0\) depends only on the linear growth constant from Condition \ref{cond: LG}. In the above computation, \(\C\) might have changed from line to line. In the following, let \(\C  > 0\) be the constant from the last inequality. By It\^o's formula, the process
	\[
	e^{- \C \cdot} V (X_{\cdot + s}) - \int_0^\cdot e^{- \C r} \Big( - \C V (X_{r + s}) + b^{P_{(s, x)}}_{r +  s}  V' (X_{r + s}) + \frac{a^{P_{(s,  x)}}_{r + s} V'' (X_{r + s})}{2} \Big) dr
	\]
	is a local \(P_{(s, x)}\)-martingale. Thus, as
	\[
	\int_0^\cdot e^{- \C r} \Big( - \C V (X_{r + s}) + b^{P_{(s, x)}}_{r +  s}  V' (X_{r + s}) + \frac{a^{P_{(s,  x)}}_{r + s} V'' (X_{r + s})}{2} \Big) dr
	\]
	is a decreasing process, \(e^{- \C \hspace{0.025cm} \cdot} V (X_{\cdot + s})\) is a local \(P_{(s, x)}\)-supermartingale and hence, as it is bounded, a true \(P_{(s, x)}\)-supermartingale. By Chebyshev's inequality and the supermartingale property, we obtain 
	\begin{align*}
	P_{(s, x)} ( |X_T| \leq M ) &= P_{(s, x)} ( V (X_T) \geq V (M) ) 
	\\&\leq \frac{ E^{P_{(s, x)}} \big[ V (X_T) \big] }{ V (M) }
	\\&\leq \frac{e^{\C T} V (x)}{V (M)}
	\longrightarrow 0 \text{ as } |x| \to \infty.
	\end{align*}
Using this observation and \eqref{eq: ineq to show C0}, we obtain the existence of a compact set \(K = K(\varepsilon) \subset \bR\) such that
\[
E^{P_{(s, x )}}\big[ \phi (X_T) \big] \leq 2 \varepsilon
\]
for all \(x \not \in K\). We conclude that \(x \mapsto E^{P_{(s, x)}} [ \phi (X_T) ]\) vanishes at infinity and therefore, that \(\p=\{P_{(s, x)} \colon (s, x) \in \bR_+ \times \bR\}\) has the \(C_0\)--Feller property.

Finally, we presume that the Conditions~\ref{cond: bdd} and \ref{cond: uniform ellipticity} hold aditionally. Then, \(\mu\) and \(\sigma^2\) are both bounded and \(\sigma^2\) is uniformly bounded away from zero. Thanks to these observations, it follows from \cite[Exercise~7.3.5]{SV} (see also \cite[Theorem~7.1.9, Exercises~7.3.3]{SV}), that \(\p\) is a uniform strong Feller family.
The proof is complete.
\end{proof}

\begin{proof}[Proof of Theorem \ref{theo: Feller selection}]
The theorem follows directly from the Theorems \ref{theo: strong Markov selection} and \ref{theo: selection is Feller}.
\end{proof}

\section{Uniform Feller Selection Principles: Proof of Theorems \ref{theo: UFSP} and \ref{theo: UFSP 2}} \label{sec: uniform selection}
\subsection{Proof of Theorem \ref{theo: UFSP}}
The following lemma can be seen as a version of \cite[Theorem~3]{hajek85} where a global Lipschitz assumption is replaced by a local H\"older and ellipticity assumption. The idea of proof is the same, i.e., we use a time change argument and the optional sampling theorem. 
\begin{lemma} \label{lem: convex order}
	Suppose that \(P\in \mathfrak{P}(\Omega)\) is such that \(X\) is a continuous local \(P\)-martingale starting at \(x_0\) with quadratic variation process \(\int_0^\cdot a_s^P ds\) such that \((\llambda \otimes P)\)-a.e. \(a^P > 0\). Let \(a \colon \mathbb{R} \to (0, \infty)\) be such that \(\sqrt{a}\) is locally H\"older continuous with exponent \(1/2\) and \(a (x) \leq \C(1 + |x|^2)\) for all \(x \in \mathbb{R}\). Suppose that \(P\)-a.s. \(a^P_t \leq a(X_t)\) for \(\llambda\)-a.a. \(t \in \mathbb{R}_+\).
	Then, the SDE 
	\begin{align}\label{eq: SDE comparison}
	d Y_t = \sqrt{a} (Y_t) d W_t, \quad Y_0 = x_0, 
	\end{align}
	satisfies strong existence and pathwise uniqueness and we denote its unique law by \(Q\).\footnote{Uniqueness in law follows from pathwise uniqueness by the Yamada--Watanabe theorem (\cite[Proposition 5.3.20]{KaraShre}).} Moreover, for every convex function \(\psi \colon \mathbb{R} \to \mathbb{R}\) of polynomial growth, i.e., such that 
	\[
	| \psi (x) | \leq \C(1 + |x|^m) \text{ for some } m \in \mathbb{N}, 
	\]
	and time \(T \in \bR_+\), we have 
	\[
	E^P \big[ \psi (X_T) \big] \leq E^Q \big[ \psi (X_T)\big].
	\]
\end{lemma}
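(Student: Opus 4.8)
The plan is to follow the strategy of \cite{hajek85}, with the global Lipschitz hypothesis used there replaced by the present local-Hölder-plus-ellipticity conditions. First, the well-posedness of \eqref{eq: SDE comparison}: since \(\sqrt a\colon\mathbb R\to(0,\infty)\) is locally \(1/2\)-Hölder continuous, pathwise uniqueness follows from the one-dimensional Yamada--Watanabe criterion, localized along the Hölder constants; the bound \(a(x)\le\C(1+|x|^2)\) makes \(\sqrt a\) of linear growth, which rules out explosion (e.g.\ via the Lyapunov function \(x\mapsto 1+x^2\)) and hence globalizes the uniqueness. Weak existence holds by continuity and linear growth (tightness/martingale-problem argument), so by the Yamada--Watanabe theorem the SDE has strong existence and a unique law \(Q\); equivalently, the martingale problem for \(\tfrac12 a(\cdot)\partial_x^2\) started at \(x_0\) is well posed, with solution \(Q\).

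For the comparison, note first that by Condition \ref{cond: LG} and \(a^P_t\le a(X_t)\) the density \(a^P\) is of linear growth in \(X\), so \(X\) is a true \(P\)-martingale with locally finite polynomial moments (as in Lemma \ref{lem: maximal inequality}); in particular \(\psi(X_T)\in L^1(P)\) for every \(\psi\) of polynomial growth and every bounded stopping time \(T\). The key device is a time change: w.l.o.g.\ \(0<a^P_t\le a(X_t)\) identically, so \(\Gamma_t:=\int_0^t a^P_s/a(X_s)\,ds\) is \(P\)-a.s.\ continuous, strictly increasing, with \(\Gamma_0=0\) and \(\Gamma_t\le t\). Letting \(\sigma\) be its inverse, \(\tilde X_u:=X_{\sigma_u}\), and passing to the time-changed filtration, the time-change theorem for continuous local martingales shows that \(\tilde X\) is a continuous local martingale and, after the substitution \(r=\Gamma_s\), \(\langle\tilde X\rangle_u=\int_0^u a(\tilde X_r)\,dr\); on \(\{\Gamma_\infty<\infty\}\) the martingale \(X\) converges, and \(\tilde X\) is extended beyond \(\Gamma_\infty\) by pasting an independent solution of the SDE started from the limit, which by the strong Markov property of the well-posed martingale problem does not change the law. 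Hence \(\tilde X\) has law \(Q\), and \(X_t=\tilde X_{\Gamma_t}\).

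Now fix a bounded stopping time \(T\le T_0\). Then \(\Gamma_T\le T_0\) is a stopping time for the time-changed filtration and \(X_T=\tilde X_{\Gamma_T}\), so \(E^P[\psi(X_T)]=E^P[\psi(\tilde X_{\Gamma_T})]\); and since \(\tilde X\) has law \(Q\), also \(E^Q[\psi(X_T)]=E^P[\psi(\tilde X_{T(\tilde X)})]\), where \(T(\tilde X)\) is the stopping rule \(T\) evaluated along \(\tilde X\). As \(\psi\) is convex and \(\tilde X\) is a martingale with finite polynomial moments on \([0,T_0]\), the process \(\psi(\tilde X)\) is a submartingale of class (DL), and a careful optional-sampling comparison of \(\psi(\tilde X)\) at \(\Gamma_T\) versus at \(T(\tilde X)\), as in \cite{hajek85}, gives \(E^P[\psi(\tilde X_{\Gamma_T})]\le E^P[\psi(\tilde X_{T(\tilde X)})]\) — this is the claimed inequality. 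Finally, the identification of \(Q\) with the law of a solution to \eqref{eq: SDE comparison} is part of the first step, and the equality \(\mathcal E^x(\psi(X_T))=E^{P^*_x}[\psi(X_T)]\) in Theorem \ref{theo: UFSP} follows because \(P^*_x\) itself lies in \(\cR(x)\) (using \(b\equiv 0\), Conditions \ref{cond: continuity in control} and \ref{cond: local holder}, and a measurable-maximum-theorem selection realizing \(a^*=\sup_f a(f,\cdot)\) inside \(\Theta\)).

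I expect the delicate points to be, first, making the time change rigorous in this non-Lipschitz and possibly unbounded setting — in particular the extension when the clock \(\Gamma_\infty\) is finite and the verification that \(\tilde X\) has law \(Q\), which is exactly where well-posedness of the martingale problem (hence the local-Hölder and ellipticity hypotheses) enters — and, second, the bookkeeping in the optional-sampling step relating the stopped time \(T\) to the time-changed filtration, where the heart of Hajek's argument lies; the \(L^1\)-integrability needed there is comparatively routine, being supplied by the polynomial moment bounds.
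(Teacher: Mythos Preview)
Your overall strategy coincides with the paper's --- a Hajek-type time change followed by optional sampling for the submartingale \(\psi(Y)\) --- but the implementation differs in one essential place, and your version has a gap there.

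The paper never tries to identify the law of the time-changed process \(\tilde X\) as \(Q\) globally, and so avoids your pasting construction beyond \(\Gamma_\infty\) altogether. Instead, having formed \(X_{S_\cdot}\), it writes \(X_{S_{\cdot\wedge L_T}}=x_0+\int_0^{\cdot\wedge L_T}\sqrt{a}(X_{S_s})\,dW_s\) for a Brownian motion \(W\) on a standard extension, invokes \emph{strong existence} for \eqref{eq: SDE comparison} to produce a solution \(Y\) driven by this same \(W\) for \emph{all} times, and then uses \emph{pathwise uniqueness} (in a localized form) to conclude \(X_{S_{\cdot\wedge L_T}}=Y_{\cdot\wedge L_T}\). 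This buys the coupling you need on \([0,L_T]\) without any law identification or strong-Markov pasting.

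The more serious point is your optional-sampling step. You propose to compare \(\psi(\tilde X)\) at \(\Gamma_T\) and at \(T(\tilde X)\), but there is no pathwise inequality \(\Gamma_T\le T(\tilde X)\) for a general bounded stopping time \(T\). For instance, take \(T(\omega)=1\) if \(\omega(1/2)>0\) and \(T(\omega)=1/2\) otherwise; if \(a^P<a(X)\) on \([0,1/2]\) then \(\sigma_{1/2}>1/2\), and one can easily have \(X_{1/2}>0\) (so \(\Gamma_{T(X)}=\Gamma_1\), possibly close to \(1\)) while \(\tilde X_{1/2}=X_{\sigma_{1/2}}\le 0\) (so \(T(\tilde X)=1/2\)). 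Hajek's original argument does not supply this comparison either. The paper sidesteps the difficulty by comparing \(Y\) at the times \(L_T\) and \(T\) (the \emph{original} stopping time, which is also a stopping time in the enlarged time-changed filtration since \(S_t\ge t\)); the ordering \(L_T\le T\) is then immediate from \(a^P\le a(X)\). This is exactly why constructing \(Y\) on the same probability space --- rather than only knowing its law --- matters: both times in the comparison are functionals of the original \(\omega\), and the ordering is transparent.
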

\begin{proof}
	The fact that the SDE \eqref{eq: SDE comparison} satisfies strong existence and pathwise uniqueness is classical (see, e.g., \cite[Corollary 5.5.10, Remark 5.5.11]{KaraShre}).
	We now prove the second claim. 	Clearly, it suffices to consider \(T > 0\).
	Define 
	\[
	L_t := \begin{cases} \int_0^t \frac{a^P_s ds}{a(X_s)},& t \leq 2T, \\
	L_{2T} + (t - 2T),& t \geq 2T.\end{cases}
	\]
	By our assumptions, \(P\)-a.s. \(L\) is continuous, strictly increasing, finite and \(L_t \leq t\) for all \(t \in \mathbb{R}_+\). Furthermore, \(P\)-a.s. \(L_t \to \infty\) as \(t \to \infty\).
	Denote the right inverse of \(L\) by \(S\), i.e., define \[S_t := \inf \{s \geq 0 \colon L_s > t\}\] for \(t \in \mathbb{R}_+\). By the above properties of \(L\), it is well-known (\cite[p. 180]{RY}) that \(P\)-a.s. \(S\) is also continuous, strictly increasing and finite, and furthermore, \(S_t \geq t\) for all \(t \in \mathbb{R}_+\).
	Using standard rules for Stieltjes integrals (see, e.g., \cite[Proposition V.1.4]{RY}), we obtain that \(P\)-a.s. for all \(t \in [0, L_T]\)
	\[
	S_t = \int_0^{S_t} \frac{a (X_s)}{a^P_s} d L_s = \int_0^t \frac{a (X_{S_s})}{a^P_{S_s}} d L_{S_s} = \int_0^t \frac{a (X_{S_s})}{a^P_{S_s}} ds.
	\]
	In other words, \(P\)-a.s. 	
	\[
	\1_{[0, L_T]} (t) d S_t = \1_{[0, L_T]}(t) \frac{a (X_{S_t})}{a^P_{S_t}} dt.
	\]
	By \cite[Proposition V.1.5]{RY}, the time changed process \(X_S\) is a continuous local \(P\)-martingale (for a time changed filtration) such that, \(P\)-a.s. for all \(t \in [0, L_T]\), we have
	\[
	\langle X_S, X_S \rangle_{t} = \langle X, X\rangle_{S_{t}} = \int^{S_{t}}_0 a^P_s ds =  \int_0^t a^P_{S_s} d S_s = \int_0^t a (X_{S_s}) ds.
	\]
	Thus, it is classical (\cite[Proposition 5.4.6]{KaraShre}) that, possibly on a standard extension of the underlying probability space, there exists a one-dimensional Brownian motion \(W\) such that 
	\[
	X_{S_{\cdot \wedge L_T}} = x_0 + \int_0^{\cdot \wedge L_T} \sqrt{a}(X_{S_s}) dW_s.
	\]
	With little abuse of notation, we denote the underlying probability measure still by \(P\).
	Thanks to the strong existence property of the SDE \eqref{eq: SDE comparison}, there exists a continuous adapted process \(Y\) such that 
	\[
	Y = x_0 + \int_0^\cdot \sqrt{a} (Y_s) d W_s.
	\]
	As the SDE \eqref{eq: SDE comparison} satisfies pathwise uniqueness, it follows from \cite[Lemma 3]{criens20SPA} that \(P\)-a.s. \(X_{S_{ \cdot \wedge L_T}} = Y_{\cdot \wedge L_T}\).
	Notice that, by the linear growth assumption on \(\sqrt{a}\), the process \(Y\) is a \(P\)-martingale. Indeed, this follows readily from a second moment bound (see, e.g., \cite[Problem 5.3.15]{KaraShre}), which implies integrability of the quadratic variation process. 
	
	We are in the position to complete the proof. Let \(\psi\) be a convex function of polynomial growth. Using again the linear growth assumption, we have polynomial moment bounds (see, e.g., \cite[Problem 5.3.15]{KaraShre}) which imply that \(\psi (Y_t) \in L^1(P)\) for all \(t \in \mathbb{R}_+\). Consequently, \(\psi (Y)\) is a \(P\)-submartingale. As \(P\)-a.s. \(L_T \leq T\), using the optional sampling theorem, we finally obtain that 
	\[
	E^P \big[ \psi (X_T) \big] = E^P \big[ \psi (X_{S_{L_T}})\big] = E^P \big[ \psi (Y_{L_T}) \big] 
	\leq E^P \big[ \psi (Y_T)\big] = E^Q \big[ \psi (X_T)\big].
	\] 
	The proof is complete.
\end{proof}

\begin{proof}[Proof of Theorem \ref{theo: UFSP}]
We set 
\[
a^* (x) := \sup \big\{ a (f, x) \colon f \in F \big\}
\]
for \(x \in \mathbb{R}\). 
Notice that \(a^*\) is locally H\"older continuous with exponent \(1/2\) thanks to Condition \ref{cond: local holder}. Furthermore, as \(a > 0\) by Condition \ref{cond: ellipticity}, compactness of \(F\) and continuity of \(a\) in the control variable (Condition \ref{cond: continuity in control}) show that \(a^* > 0\). Finally, \(\sqrt{a^*}\) is of linear growth by Condition \ref{cond: LG}.
For every \(x\in \mathbb{R}\), let \(P^*_x\) be the unique law of a solution process to the SDE
\[
d Y_t = \sqrt{a^*} (Y_t) d W_t, \quad Y_0 = x, 
\]
where \(W\) is a one-dimensional Brownian motion. The existence of \(P^*_x\) is classical (or follows from Lemma~\ref{lem: convex order}).
Moreover, \cite[Corollary 10.1.4, Theorem 10.2.2]{SV} yield that \(\{P^*_x \colon x \in \mathbb{R}\}\) is a strong Feller family. It is left to prove the formula \eqref{eq: USFSP}. 
Take \(\psi \in \mathbb{G}_{cx}\) and \(T \in \bR_+\). 
Now, Lemma~\ref{lem: convex order} yields that 
\[
\mathcal{E}^x (\psi (X_T)) \leq E^{P^*_x} \big[ \psi (X_T) \big].
\]
As \(P^*_x \in \cC(0, x)\), we also have 
\[
E^{P^*_x} \big[ \psi (X_T) \big] \leq \mathcal{E}^x (\psi (X_T)).
\]
Putting these pieces together yields the formula \eqref{eq: USFSP} and hence, the proof is complete.
\end{proof}

\subsection{Proof of Theorem \ref{theo: UFSP 2}}
We set 
\[
b^* (x) := \sup \big\{ b (f, x) \colon f \in F \big\}
\]
for \(x \in \mathbb{R}\). As \(F\) is compact and \(b\) is continuous, \(b^*\) is continuous by Berge's maximum theorem (\cite[Theorem 17.31]{hitchi}). 
Moreover, \(b^*\) and \(a^*\) are of linear growth by Condition \ref{cond: LG}.
Consequently, taking Condition \ref{cond: holder} into consideration, \cite[Theorem 4.53]{engelbert1991strong} and \cite[Theorem 10.2.2]{SV} yield that the SDE
\begin{align} \label{eq: SDE comparison 2}
d Y_t = b^* (Y_t) dt + \sqrt{a^*} (Y_t) d W_t
\end{align}
satisfies weak existence and pathwise uniqueness. Let \(P^*_x\) be the unique law of a solution process starting at \(x \in \mathbb{R}\). Then, by \cite[Corollary 10.1.4, Theorem 10.2.2]{SV}, the family \(\{P^*_x \colon x \in \mathbb{R}\}\) is strongly Feller. Let \(\psi \colon \Omega \to \mathbb{R}\) be a bounded increasing Borel function. By construction, we have
\[
E^{P^*_x} \big[ \psi  \big] \leq \mathcal{E}^x (\psi), \quad x \in \mathbb{R}.
\]
Take \(x \in \mathbb{R}\) and \(P \in \cR( x )\). Thanks to  \cite[Theorem VI.1.1]{ikeda2014stochastic}, possibly on a standard extension of \((\Omega, \mathcal{F}, \F, P)\), there exists a solution process \(Y\) to the SDE \eqref{eq: SDE comparison 2} with \(Y_0 = x\) such that a.s. \(X_t \leq Y_t\) for all \(t \in \mathbb{R}_+\). Clearly, this yields that 
\[
E^P \big[ \psi \big] \leq E^{P^*_x} \big[ \psi \big],
\]
and taking the \(\sup\) over all \(P \in \cR(x)\) finally gives 
\[
\mathcal{E}^x (\psi) \leq  E^{P^*_x} \big[ \psi \big].
\]
The proof is complete. \qed

\section{A nonlinear Kolmogorov Equation: Proof of Theorems \ref{thm: new viscosity no unique} and \ref{theo: viscosity with ell}} \label{sec: viscosity property}
\subsection{Proof of Theorem \ref{thm: new viscosity no unique}}
It follows from \cite[Theorem 4.3]{CN22} that \( v \) is a weak sense viscosity solution to \eqref{eq: PDE}.
Hence, it suffices to show continuity of
\([0, T] \times \mathbb{R} \ni (t,x) \mapsto \cE^x(\psi(X_{T-t})) \).
Due to Theorem~\ref{thm: new very main}, the map \(x \mapsto \cE^x (\psi (X_{T- t}))\) is continuous for every \(t \in [0, T]\). 
We now show that, for any compact set~\(K \subset \mathbb{R}\),
\[
\sup_{x \in K} \big| \cE^x (\psi (X_{T - t})) - \cE^x (\psi (X_{T - s}))\big| \to 0 \text{ as } s \to t.
\]
Clearly, by the triangle inequality, this then implies continuity of \([0, T] \times \mathbb{R} \ni (t,x) \mapsto \cE^x(\psi(X_{T-t})) \).
Take a compact set \(K \subset \mathbb{R}\), \(r, \varepsilon > 0\) and \(s, t \in [0, T]\).
In the following \(\C > 0\) denotes a generic constant which is independent of \(r, \varepsilon, s, t\).
By Lemma \ref{lem: maximal inequality}, we get
\begin{align*}
\sup_{x \in K} \sup_{P \in \cR(x)} P ( |X_{T - t}| > r \text{ or } |X_{T - s}| > r) \leq \tfrac{\C}{r}.
\end{align*}
Using Lemma \ref{lem: maximal inequality} again, we further obtain that
\begin{align*}
   \sup_{x \in K} \big| \cE^x & (\psi (X_{T - t})) - \cE^x (\psi (X_{T - s}))\big| 
   \\ & \leq \sup_{x \in K} \sup_{P \in \cR(x)} E^P \big[ | \psi (X_{T - t}) - \psi (X_{T - s})| \big]
   \\
   &\leq \sup_{x \in K} \sup_{P \in \cR(x)} E^P \big[ | \psi (X_{T - t}) - \psi (X_{T - s})| \1_{\{ |X_{T - t} - X_{T - s}| \leq \varepsilon\} \hspace{0.05cm}\cap \hspace{0.05cm} \{ |X_{T - t}| > r \text{ or } |X_{T - s}| > r \}}\big] 
   \\&\hspace{2cm} + \sup_{x \in K} \sup_{P \in \cR(x)} E^P \big[ | \psi (X_{T - t}) - \psi (X_{T - s})| \1_{\{ |X_{T - t} - X_{T - s}| \leq \varepsilon,\ |X_{T - t}| \leq r,\ |X_{T - s}| \leq r \}}\big]
   \\&\hspace{2cm} +\sup_{x \in K} \sup_{P \in \cR(x)} E^P \big[ | \psi (X_{T - t}) - \psi (X_{T - s})| \1_{\{ |X_{T - t} - X_{T - s}| > \varepsilon \}}\big]
   \\&\leq \frac{2 \C \|\psi\|_\infty}{r} + \sup \big\{ |\psi (z) - \psi(y)| \colon |z-y| \leq \varepsilon, |z| \leq r, |y| \leq r \big\} \\&\hspace{2cm}+ 2 \|\psi\|_\infty \sup_{x \in K} \sup_{P \in \cR(x)} P(|X_{T - t} - X_{T - s}| > \varepsilon)
   \\&\leq \frac{2 \C \|\psi\|_\infty}{r} + \sup \big\{ |\psi (z) - \psi(y)| \colon |z-y| \leq \varepsilon, |z| \leq r, |y| \leq r \big\} + \frac{ \C \|\psi\|_\infty}{\varepsilon} |t - s|^{1/2}.
   \end{align*}
   Notice that the middle term converges to zero as \(\varepsilon \to 0\), since continuous functions are uniformly continuous on compact sets. 
   Thus, choosing first \(r\) large enough and then \(\varepsilon\) small enough, we can make 
   \[
   \sup_{x \in K} \big| \cE^x (\psi (X_{T - t})) - \cE^x (\psi (X_{T - s}))\big| 
   \]
   arbitrarily small when \(s \to t\). This yields the claim.
\qed

\subsection{Proof of Lemma~\ref{lem: en upper lower}} \label{sec: pf en upper lower}
We only detail the proof for the subsolution property of \(v^*\). The supersolution property of \(v_*\) will follow in the same spirit.
Let \(\phi \in  C^{2, 3}_b([0, T] \times \bR; \bR)\) such that \(\phi \geq v^*\) and \(\phi (t, x) = v^*(t, x)\) for some \((t, x) \in [0, T) \times \bR \). Notice that we use a test function of higher regularity than in our definition of ``viscosity subsolution''. This is without loss of generality, cf. \cite[Lemma~2.4, Remark~2.5]{hol16}.
There exists a sequence \((t^n, x^n)_{n = 1}^\infty \subset [0, T) \times \bR\) such that \((t^n, x^n) \to (t, x)\) and 
\[
v^* (t, x) = \lim_{n \to \infty} v (t^n, x^n). 
\]
We take an arbitrary \(u \in (0, T - t)\).
By the dynamic programming principle (\cite[Theorem 3.1]{CN22}), for every \(n \in \mathbb{N}\), we obtain that 
\begin{equation} \label{eq: main ev upper}
	\begin{split}
		0 &= \sup_{P \in \cR (x^n)} E^P \big[ v (t^n + u, X_u) \big] - v (t^n, x^n)
		\\&\leq \sup_{P \in \cR (x^n)} E^P \big[ v^* (t^n + u, X_u) \big] - v^* (t, x) + v^* (t, x) - v (t^n, x^n)
  \\&= \sup_{P \in \cR (x^n)} E^P \big[ v^* (t^n + u, X_u) \big] - \phi (t, x) + v^* (t, x) - v (t^n, x^n)
		\\&\leq \sup_{P \in \cR (x^n)} E^P \big[ \phi (t^n + u, X_u) \big] - \phi (t^n, x^n) + \phi (t^n, x^n) - \phi (t, x) + v^* (t, x) - v (t^n, x^n).
	\end{split}
\end{equation}
Now, we use an argument as in the proof of \cite[Proposition~5.4]{neufeld2017nonlinear}. 
Take \(P \in \cR(x^n)\). It\^o's formula yields that \(P\)-a.s.
\begin{align*}
	\phi (t^n + u, X_u) - \phi (t^n, x^n) = \int_0^u \big[ \partial_t \phi (t^n + s, X_s) + b^P_s \partial_x \phi (t^n + s, X_s) + \tfrac{1}{2} c^P_s & \partial^2_x \phi (t^n + s, X_s) \big] ds 
	\\&+ \text{local \(P\)-martingale}. 
\end{align*}
Thanks to the linear growth Condition~\ref{cond: LG} and Lemma~\ref{lem: maximal inequality}, it follows that the local \(P\)-martingale part is actually a true \(P\)-martingale. Hence, we obtain that 
\begin{align*}
	E^P \big[ \phi (t^n &+ u, X_u) - \phi (t^n, x^n) \big] 
 \\&= \int_0^u E^P \big[ \partial_t \phi (t^n + s, X_s) + b^P_s \partial_x \phi (t^n + s, X_s) + \tfrac{1}{2} c^P_s \partial^2_x \phi (t^n + s, X_s) \big] ds
	\\&=: I_u.
\end{align*}
As \(\phi \in C^{2, 3}_b ([0, T] \times \bR; \bR)\), the derivatives \(\partial_t \phi, \partial_x \phi\) and \(\partial^2_x \phi\) are (globally) Lipschitz continuous. Hence, we obtain that 
\begin{align*}
	\int_0^u \partial_t \phi (t^n + s , X_s) ds &\leq u \partial_t \phi (t^n, x^n) + \int_0^u | \partial_t \phi (t^n + s , X_s) - \partial_t \phi (t^n, x^n) | ds 
	\\&\leq u \partial_t \phi (t^n, x^n) + \C  \int_0^u \big( s + |X_s - x^n| \big) ds 
	\\&= u \partial_t \phi (t^n, x^n) + \C u^2 + \C  \int_0^u|X_s - x^n| ds.
\end{align*}
By Lemma~\ref{lem: maximal inequality}, this implies that 
\begin{align*}
	E^P \Big[ 	\int_0^u \partial_t \phi (t^n + s , X_s) ds \Big] &\leq u \partial_t \phi (t^n, x^n) + \C u^2 + \C \int_0^u E^P \big[ |X_s - x^n| \big] ds
	\\&\leq u \partial_t \phi (t^n, x^n) + \C u^2 + \C \int_0^u s^{1/2} ds 
	\\&\leq u \partial_t \phi (t^n, x^n) + \C u^{3/2}.
\end{align*}
Similarly, using also the linear growth Condition~\ref{cond: LG}, we obtain that 
\begin{align*}
	E^P \Big[ \int_0^u \big| b^P_s \big| \, \big| \partial_x &\phi (t^n + s, X_s) - \partial_x \phi (t^n, x^n) \big| ds \Big] 
	\\&\leq E^P \Big[ \int_0^u \C \big( s + |X_s - x^n| \big) \big(1 + |X_s| \big) ds \Big] 
 \\&\leq E^P \Big[ \int_0^u \C \big( s + |X_s - x^n| \big) \big(1 + |X_s - x^n| \big) ds \Big] 
	\\&\leq \C u^{3/2}, 
\end{align*}
and that 
\[
E^P \Big[ \int_0^u \big| c^P_s \big| \, \big| \partial_x^2 \phi (t^n + s, X_s) - \partial_x^2 \phi (t^n, x^n) \big| ds \Big] \leq \C u^{3/2}. 
\]
In summary, we conclude that 
\begin{align*}
	I_u \leq \C u^{3/2} + u \partial_t \phi (t^n, x^n) + \int_0^u E^P \big[ G^n (X_s) \big] ds, 
\end{align*}
where 
\[
G^n (x) := \sup \Big\{ b (f, x) \partial_x \phi (t^n, x^n) + \tfrac{1}{2} a (f, x) \partial^2_x \phi (t^n, x^n) \colon f \in F \Big\}. 
\]
Using Condition \ref{cond: Lipschitz continuity}, we obtain that 
\begin{align*}
	G^n (x) \leq G (t^n, x^n, \phi) + C |x - x^n|.
\end{align*}
Hence, using again Lemma \ref{lem: maximal inequality}, it follows that 
\begin{align*}
	I_u \leq \C u^{3/2} + u \partial_t \phi (t^n, x^n) + u G (t^n, x^n, \phi) + \C u^{3/2}.
\end{align*}
Thanks to Condition~\ref{cond: continuity} and Berge's maximum theorem, the map \((s, y) \mapsto G (s, y, \phi)\) is continuous. 
Recalling \eqref{eq: main ev upper} and taking the limit \(n \to \infty\), we obtain that 
\begin{align*}
	0 &\leq \C u^{3/2} + u \partial_t \phi (t^n, x^n) + u G (t^n, x^n, \phi) + \phi (t^n, x^n) - \phi (t, x) + v^* (t, x) - v (t^n, x^n)
	\\&\to \C u^{3/2} + u \partial_t \phi (t, x) + u G (t, x, \phi).
\end{align*}
Dividing the last term by \(u\) and then letting \(u \searrow 0\), we finally conclude that 
\begin{align*}
	0 \leq \partial_t \phi (t, x) + G (t, x, \phi).
\end{align*}
We proved that \(v^*\) is a viscosity subsolution to \eqref{eq: PDE}. \qed

\section{Relation to the Nisio Semigroup} \label{sec: pf nisio}

\subsection{Proof of Lemma~\ref{lem: Sf semigroup}}
We first establish an auxiliary result. Recall that \(Y^{f, x}\) are continuous processes with dynamics
\[
d Y_t^{f, x} = b (f, Y^{f, x}_t) dt + \sqrt{a} (f, Y^{f, x}_t) d W_t, \quad Y^{f, x}_0 = x,
\]
which are defined on the same probability space w.r.t. the same Brownian motion \(W\).
\begin{lemma} \label{lem: A1}
Assume that Condition~\ref{cond: fix f cond} holds and let \(\beta\) be the uniform (in the \(F\)-variable) Lipschitz constant of the drift coefficient. Then, for every \(f \in F, t \in \bR_+\) and \(x, y \in \bR\), 
\[
E^P \big[ | Y^{f, x}_t - Y^{f, y}_t | \big] \leq |x - y| \, e^{\beta t}.
\]
\end{lemma}
\begin{proof}
    Take \(f \in F, x, y \in \bR\) and set
    \[
    Z := Y^{f, x} - Y^{f, y} = x - y + \int_0^\cdot \big( b (f, Y^{f, x}_t) - b (f, Y^{f, y}_t) \big) dt + \int_0^\cdot \big( \sqrt{a} (f, Y^{f, x}_t) - \sqrt{a} (f, Y^{f, y}_t) \big) dW_t.
    \]
    Fix \(M > 0\), set 
    \[
    T_M := \inf \big \{t \geq 0 \colon |Y^{f, x}_t| \vee |Y^{f, y}_t| \geq M \big \}.
    \]
    For every \(t > 0\), we have \(P\)-a.s.
    \begin{align*}
        \int_0^{t \wedge T_M} \frac{\1_{\{ 0 < Z_s \leq \varepsilon\}} d [ Z, Z ]_s}{|Z_s|} &\leq \int_0^{t \wedge T_M} \frac{( \sqrt{a} (f, Y^{f, x}_s) - \sqrt{a} (f, Y^{f, y}_s)  )^2 ds }{|Z_s|}
        \leq \C t < \infty.
    \end{align*}
    Hence, \cite[Lemma~IX.3.3]{RY} yields that \(P\)-a.s. \(L^0_{\cdot \wedge T_M} (Z) = 0\), where \(L^0(Z)\) denotes the semimartingale local time of \(Z\) in zero. As \(P\)-a.s. \(T_M \nearrow \infty\) with \(M \to \infty\), this implies that \(P\)-a.s. \(L^0 (Z) = 0\).
    Now, we get from Tanaka's formula (\cite[Theorem~VI.1.2]{RY}) that \(P\)-a.s.
    \[
    |Z| = |x - y| + \int_0^\cdot \on{sgn} (Z_s) d Z_s.
    \]
    Using that the local martingale part of the stochastic integral is a true martingale (which follows from the linear growth part of Condition~\ref{cond: fix f cond}), we obtain, for every \(t \in \bR_+\), that 
    \begin{align*}
        E^P \big[ |Z_t| \big] &\leq |x - y| + \int_0^t E^P \big[ |b (f, Y^{f, x}_s) - b (f, Y^{f, y}_s) | \big] ds 
        \\&\leq |x - y| + \int_0^t \beta E^P \big[ |Z_s| \big] ds.
    \end{align*}
    Gronwall's lemma yields that \(E^P [ |Z_t|] \leq |x - y| e^{\beta t}\). This is the claim.
\end{proof}

\begin{proof}[Proof of Lemma~\ref{lem: Sf semigroup}]
As we already know that \((S^f_t)_{t \in \bR_+}\) has the semigroup property, it suffices to prove that \(S^f_t\) maps \(\uc_b (\bR; \bR)\) into \(\uc_b (\bR; \bR)\). Take \(u \in \uc_b (\bR; \bR)\) and \(\varepsilon > 0\). By the definition of uniform continuity, there exists a constant \(\delta = \delta (\varepsilon) > 0\) such that 
\[
x, y \in \bR,\, |x - y| < \delta \ \Longrightarrow \ | u (x) - u (y) | < \varepsilon / 2. 
\]
For all \(x, y \in \bR\) and \(t > 0\), Lemma~\ref{lem: A1} yields that 
\begin{align*}
\big| S^{f}_t (u) (x) - S^f_t (u) (y) \big| &\leq E \big[ \big| u (Y^{f, x}_t) - u (Y^{f, y}_t) \big| \big] 
\\&\leq \varepsilon / 2 + 2 \|u\|_\infty P (| Y^{f, x}_t - Y^{f, y}_t | \geq \delta) 
\\&\leq \varepsilon / 2 + |x - y| \, 2 \|u\|_\infty e^{\beta t}/ \delta.
\end{align*}
Therefore, there exists a \(\delta' = \delta' (\varepsilon) > 0\) such that 
\[
x, y \in \bR,\, |x - y| < \delta' \ \Longrightarrow \ \big| S^{f}_t (u) (x) - S^f_t (u) (y) \big| < \varepsilon.
\]
This means that \(S^f_t (u) \in \uc_b (\bR; \bR)\) and hence, completes the proof.
\end{proof}

\subsection{Proof of Proposition~\ref{prop: first NR}}
    (i). We have to check the assumptions (A1) and (A2) from \cite{NR}. Then, the claim follows from \cite[Theorem~2.5]{NR}. 
    In our setting, Condition~\ref{cond: fix f cond} implies these standing assumptions by virtue of the Lemmata~\ref{lem: Sf semigroup} and \ref{lem: A1}.

    \smallskip
    (ii). We denote the space of all bounded twice differentiable uniformly continuous functions from \(\bR\) into \(\bR\) with bounded and uniformly continuous derivatives by \(\uc_b^{\,2} (\bR; \bR)\). Take a function \(u \in \uc_b^{\, 2} (\bR; \bR)\) and set 
    \[
    L_u := \sup \big\{ \big|b (f, x) u' (x) + \tfrac{1}{2} a ( f, x ) u'' (x)\big| \colon f \in F, x \in \bR \big\}.
    \]
    Thanks to Condition~\ref{cond: bdd}, \(L_u < \infty\). Take \(x \in \bR\). By It\^o's formula, we obtain that 
    \[
    \big| E^P \big[ u (Y^{f, x}_t) \big] - u (x) \big| \leq t L_u
    \]
    for all \(t \in \bR_+\).
    Hence, as \(\uc^{\, 2}_b (\bR; \bR)\) is dense in \(\uc_b(\bR; \bR)\) for the uniform topology, it follows from \cite[Proposition~3.5]{NR} that \((\mathscr{S}_t)_{t \in \bR_+}\) is strongly continuous. The proof is complete. 

    \smallskip 
    (iii). 
    Let \(C^2_0 (\bR; \bR)\) be the space of all twice continuously differentiable functions \(g \colon \bR \to \bR\) such that \(g, g', g'' \in C_0 (\bR; \bR)\). 
    We set 
        \[
    D (A^f) := \Big\{ u \in \uc_b (\bR; \bR) \colon \exists g \in \uc_b (\bR; \bR) \text{ s.t. } \lim_{h \searrow 0} \Big\| \frac{S^f_h (u) - u}{h} - g \Big\|_\infty = 0 \Big\}.
    \]
    It is well-known that each \((S^f_t)_{t \in \bR_+}\) is a strongly continuous semigroup on \(C_0(\bR; \bR)\), cf. \cite[Theorem~32.11]{Kallenberg}.
    Hence, \(C^2_0 (\bR; \bR) \subset \bigcap_{f \in F} D (A^f)\) by \cite[Proposition~VII.1.7]{RY}.
    Now, thanks to \cite[Proposition~5.4]{NR}, the claim follows once we prove that for every \(y \in \bR\) and \(\delta > 0\) there exists a function \(\phi = \phi_{y, \delta} \in C_0^2 (\bR; \bR)\) such that \(\phi (y) = 1, 0 \leq \phi \leq 1\) and \(\sup_{f \in F} \|A^f (\phi) \|_\infty \leq \delta\).
    Fix \(y \in \bR\) and \(\delta > 0\). For \(R > 0\), set 
    \[
    \phi (x) := \frac{R}{R + (x - y)^2}.
    \]
    Clearly, \(\phi (y) = 1\) and \(\phi \in C_0 (\bR; [0, 1])\). Furthermore, as
    \begin{align*}
        \phi' (x) &= \frac{- 2 R (x - y)}{(R + (x - y)^2)^2}, \\ 
        \phi'' (x) &= \frac{8 R (x - y)^2}{(R + (x - y)^2)^3} - \frac{2R}{(R + (x - y)^2)^2}, 
    \end{align*}
    we have \(\phi \in C^2_0 (\bR; \bR)\). Using Condition~\ref{cond: bdd}, we obtain that 
    \begin{align*}
        | A^f (\phi) (x) | &\leq \C \Big[ \frac{R |x - y|}{(R + (x - y)^2)^2} + \frac{8 R (x - y)^2}{(R + (x - y)^2)^3} + \frac{2}{R} \Big]
        \\&\leq \C \Big[ \frac{32}{27 R} + \frac{3 \sqrt{3}}{16 \sqrt{R}} + \frac{2}{R} \Big], 
    \end{align*}
    where we use that \(z \mapsto R |z|/(R + z^2)^2\) attains its maximum at \(z^2 = R/3\) and \(z \mapsto 8 R z^2/ (R + z^2)^3\) attains its maximum at \(z^2 = R/2\). Now, we can choose \(R = R(\delta)\) large enough such that \(\sup_{f \in F} \|A^f ( \phi) \|_\infty \leq \delta\). The proof is complete.
    \qed

\subsection{Proof of Proposition~\ref{prop: extension}}
Thanks to Proposition \ref{prop: first NR}, it follows from \cite[Remark~5.4 (b, c)]{denk2018} that, for every \(t \in \bR_+\), there exists a unique sublinear operator \(\widehat{\mathscr{S}}_t \colon C_b (\bR; \bR) \to C_b (\bR; \bR)\) such that \(\mathscr{S}_t ( \phi) = \widehat{\mathscr{S}}_t (\phi)\) for all \(\phi \in \uc_b(\bR; \bR)\) that is continuous from above on \(C_b(\bR;\bR)\); see also \cite[Remark~5.3~(c)]{NR}. Moreover, \cite[Remark~5.4 (d)]{denk2018} ensures the semigroup property of 
\((\widehat{\mathscr{S}}_t)_{t \in \bR_+}\). This completes the proof.\qed

\subsection{Proof of Theorem~\ref{thm: nisio}}
    {\em Step 1.} Let \(\lip^2_b (\bR; \bR)\) be the space of all bounded Lipschitz continuous and twice continuously differentiable functions from \(\bR\) into \(\bR\) with bounded and Lipschitz continuous first and second derivatives.
    First, we prove that \(\lip^2_b(\bR; \bR) \subset \bigcap_{f \in F} D (A^f)\). Take \(u \in \lip^2_b (\bR; \bR)\).
    Recall the following fact: If \(g_1\) and \(g_2\) are bounded and Lipschitz continuous with Lipschitz constants \(L_1\) and \(L_2\), then \(g_1 g_2\) is also Lipschitz continuous with Lipschitz constant \(\|g_1\|_\infty L_2 + \|g_2\|_\infty L_1\). Using this fact and the Conditions~\ref{cond: bdd} and \ref{cond: Lipschitz continuity}, we obtain the existence of a constant \(\C = \C_u\), that is in particular independent of \(f\), such that, for all \(x, y \in \bR\),
    \begin{align} \label{eq: A lip}
        | A^f (u)(x) - A^f (u) (y) | \leq \C |x - y|.
    \end{align}
    Now, for \(h \in (0, 1)\) and \(x \in \bR\), It\^o's formula, the Lipschitz bound \eqref{eq: A lip}, the Burkholder--Davis--Gundy inequality and Condition~\ref{cond: bdd} yield that
    \begin{align*}
        \Big|\frac{S^f_h (u) (x) - u(x)}{h} - A^f (u) (x)\Big| &= \Big| \frac{1}{h} E^P \Big[ \int_0^h (A^f (u) (Y^{f, x}_s) - A^f (u) (x)) ds \Big]\Big| 
        \\&\leq \frac{\C}{h} \int_0^h E^P \big[ | Y^{f, x}_s - x |^2 \big]^{1/2} ds 
        \\&\leq \frac{\C}{h} \int_0^h s^{1/2} ds = \C h^{1/2}.
    \end{align*}
    This proves that \(u \in \bigcap_{f \in F} D(A^f)\).
    
    \smallskip
    {\em Step 2.}
    Next, we prove that 
    \begin{align} \label{eq: prop 4.2 NR}
    \sup_{f \in F} \| S^f_h (A^f (u)) - A^f (u) \|_\infty \to 0, \quad h \searrow 0,
    \end{align}
    for all \(u \in \lip^2_b (\bR; \bR)\). 
    For every \(h \in (0, 1)\), using \eqref{eq: A lip}, the boundedness of \(b\) and \(a\) and the Burkholder--Davis--Gundy inequality, we obtain that 
    \begin{align*}
        \big| E^P \big[ A^f (u) (Y^{f, x}_h) \big] - A^f (u) (x) \big| &\leq \C E^P \big[ | Y^{f, x}_h - x| \big] 
        \leq \C h^{1/2}.
    \end{align*}
    We stress that the constant \(\C\) does not depend on \(f, x\) and \(h\). Hence, \eqref{eq: prop 4.2 NR} holds. 

    \smallskip
    {\em Step 3.} Recalling that \((\mathscr{S}_t)_{t\in \bR_+}\) is strongly continuous by Proposition~\ref{prop: first NR}, and thanks to the Steps~1 and 2, and the boundedness Condition~\ref{cond: bdd}, it follows from \cite[Proposition~4.2]{NR} that the class \(\lip^{2}_b (\bR; \bR)\) is contained in the domain \(\mathcal{D}\) of the generator of \((\mathscr{S}_t)_{t\in \bR_+}\) that is defined on p. 4414 in \cite{NR}. Using this observation and \cite[Remark~4.4, Theorem~4.5]{NR}, we get, for every \(u_0 \in \uc_b (\bR; \bR)\) and every \(T > 0\), that the map \([0, T] \ni t \mapsto \mathscr{S}_{T - t} (u_0)\) is a viscosity solution to the PDE
    \[
 \partial_t u + \sup_{f \in F} A^f (u) = 0 \text{ on } [0, T) \times \bR, \quad u (T, \cdot\,) = u_0,
    \]
    where the class of test functions is now \(\lip^{1, 2}_b ( [0, T) \times \bR )\). It is well-known (see, e.g., \cite[Lemma~2.4,~Remark 2.5]{hol16} or \cite[p. 4419]{NR}) that the class \(\lip^{1, 2}_b ( [0, T) \times \bR )\) leads to the same definition of viscosity solution as the standard class \(C^{1, 2}_b ([0, T) \times \bR)\). Hence, it follows from Theorem~\ref{theo: viscosity with ell} that \(\mathscr{S}_{T - t} (u_0) = T_{T - t} (u_0)\) for all \(t \in [0, T]\). As \(u_0 \in \uc_b (\bR; \bR)\) and \(T > 0\) were arbitrary, we conclude that \((\mathscr{S}_t)_{t\in \bR_+} = (T_t)_{t \in \bR_+}\) on \(\uc_b(\bR; \bR)\). The proof is complete.
\qed

%---------------------------------------------
%---------------------------------------------

\end{document}